\documentclass[11pt]{article}
\usepackage{amsmath,amssymb,amsthm,amscd,mathdots}

\setlength{\topmargin}{-0.5cm}
\setlength{\textheight}{22cm}
\setlength{\evensidemargin}{0.5cm}
\setlength{\oddsidemargin}{0.5cm}
\setlength{\textwidth}{15cm}

\newtheorem{theorem}{Theorem}[section]
\newtheorem{lemma}[theorem]{Lemma}
\newtheorem{proposition}[theorem]{Proposition}
\newtheorem{corollary}[theorem]{Corollary}

\theoremstyle{plain}

\theoremstyle{definition}

\numberwithin{equation}{section}

\renewcommand{\theenumi}{(\roman{enumi})}
\renewcommand{\labelenumi}{\textup{(\theenumi)}}

\title{Extension groups for the $C^*$-algebras associated with \\
$\lambda$-graph systems 
}
\author{Kengo Matsumoto \\
Department of Mathematics \\
Joetsu University of Education \\
Joetsu, 943-8512, Japan
}

\begin{document}

\maketitle

\date{}

\def\det{{{\operatorname{det}}}}

\begin{abstract} A $\lambda$-graph system is a labeled Bratteli diagram 
with certain additional structure, which presents a subshift.
The class of the $C^*$-algebras $\mathcal{O}_{\frak L}$ associated with 
the $\lambda$-graph systems is a generalized class of the class of Cuntz--Krieger algebras.  
In this paper, we will compute the strong extension groups
$\operatorname{Ext}_{\operatorname{s}}(\mathcal{O}_{\frak L})$ 
for the $C^*$-algebras associated with $\lambda$-graph systems ${\frak L}$
and study their relation with the weak extension group 
$\operatorname{Ext}_{\operatorname{w}}(\mathcal{O}_{\frak L})$.
\end{abstract}

{\it Mathematics Subject Classification}:
 Primary 46L80; Secondary 19K33.

{\it Keywords and phrases}:  $C^*$-algebra, Extension groups, K-homology,



\newcommand{\Ker}{\operatorname{Ker}}
\newcommand{\sgn}{\operatorname{sgn}}
\newcommand{\Ad}{\operatorname{Ad}}
\newcommand{\ad}{\operatorname{ad}}
\newcommand{\orb}{\operatorname{orb}}

\def\Re{{\operatorname{Re}}}
\def\det{{{\operatorname{det}}}}
\def\calK{{\mathcal{K}}}

\newcommand{\N}{\mathbb{N}}
\newcommand{\C}{\mathbb{C}}
\newcommand{\R}{\mathbb{R}}
\newcommand{\Rp}{{\mathbb{R}}^*_+}
\newcommand{\T}{\mathcal{T}}

\newcommand{\Z}{\mathbb{Z}}
\newcommand{\Zp}{{\mathbb{Z}}_+}

\def\K{{{\operatorname{K}}}}
\def\Ext{{{\operatorname{Ext}}}}
\def\Exts{{{\operatorname{Ext}_{\operatorname{s}}}}}
\def\Extw{{{\operatorname{Ext}_{\operatorname{w}}}}}
\def\Ext{{{\operatorname{Ext}}}}
\def\KK{{{\operatorname{KK}}}}

\def\OA{{{\mathcal{O}}_A}}
\def\ON{{{\mathcal{O}}_N}}

\def\OSG{{{\mathcal{O}}_{S_G}}}

\def\A{{\mathcal{A}}}

\def\B{{\mathcal{B}}}
\def\E{{\mathcal{E}}}
\def\Q{{\mathcal{Q}}}
\def\calQ{{\mathcal{Q}}}

\def\calC{{\mathcal{C}}}

\def\Ext{{{\operatorname{Ext}}}}
\def\Max{{{\operatorname{Max}}}}
\def\min{{{\operatorname{min}}}}
\def\max{{{\operatorname{max}}}}
\def\KMS{{{\operatorname{KMS}}}}
\def\Proj{{{\operatorname{Proj}}}}
\def\Out{{{\operatorname{Out}}}}
\def\Aut{{{\operatorname{Aut}}}}
\def\Ad{{{\operatorname{Ad}}}}
\def\Inn{{{\operatorname{Inn}}}}
\def\Int{{{\operatorname{Int}}}}
\def\det{{{\operatorname{det}}}}
\def\exp{{{\operatorname{exp}}}}
\def\nep{{{\operatorname{nep}}}}
\def\sgn{{{\operatorname{sign}}}}
\def\cobdy{{{\operatorname{cobdy}}}}

\def\Ker{{{\operatorname{Ker}}}}
\def\Coker{{{\operatorname{Coker}}}}
\def\Im{{\operatorname{Im}}}
\def\Hom{{\operatorname{Hom}}}

\newcommand{\Ew}{{\operatorname{Ext_{\operatorname{w}}}}}
\def\Es{{{\operatorname{Ext_{\operatorname{s}}}}}}

\def\Extwz{{{\operatorname{Ext_{\operatorname{w}}^0}}}}
\def\Extsz{{{\operatorname{Ext_{\operatorname{s}}^0}}}}
\def\Extwo{{{\operatorname{Ext_{\operatorname{w}}^1}}}}
\def\Extso{{{\operatorname{Ext_{\operatorname{s}}^1}}}}
\def\Extwi{{{\operatorname{Ext_{\operatorname{w}}^i}}}}
\def\Extsi{{{\operatorname{Ext_{\operatorname{s}}^i}}}}

\def\B{{\mathcal{B}}}
\def\D{{\mathcal{D}}}
\def\G{{\mathcal{G}}}

\def\LLL{{ {\frak L}^{\lambda(\Lambda)} }}
\def\LLTL{{ {\frak L}^{\lambda(\widetilde{\Lambda})} }}

\def\LSG{{ {\frak L}_{S_G} }}

\def\OA{{ {\mathcal{O}}_A }}
\def\OB{{ {\mathcal{O}}_B }}
\def\DA{{ {\mathcal{D}}_A }}
\def\DB{{ {\mathcal{D}}_B }}

\def\OL{{ {\mathcal{O}}_{\frak L}}}
\def\DL{{ {\mathcal{D}}_{\frak L}}}
\def\AL{{ {\mathcal{A}}_{\frak L}}}
\def\OLmin{{ {\mathcal{O}}_{{\Lambda}^{\operatorname{min}}} }}

\def\DLmin{{ {\mathcal{D}}_{{\frak L}_\Lambda^{\operatorname{min}}} }}

\def\DLam{{ {\mathcal{D}}_{\Lambda}}}

\def\L{{\frak L}}

\def\A{ {\mathcal{A}} }
\def\D{ {\mathcal{D}} }
\def\F{ {\mathcal{F}} }
\def\S{{\mathcal{S}}}

\def\Ext{{{\operatorname{Ext}}}}
\def\Im{{{\operatorname{Im}}}}
\def\Aut{{{\operatorname{Aut}}}}
\def\Ad{{{\operatorname{Ad}}}}

\def\Int{{{\operatorname{Int}}}}

\def\Hom{{{\operatorname{Hom}}}}
\def\Ker{{{\operatorname{Ker}}}}
\def\dim{{{\operatorname{dim}}}}
\def\min{{{\operatorname{min}}}}
\def\Max{{{\operatorname{Max}}}}

\def\id{{{\operatorname{id}}}}
\def\ind{{{\operatorname{ind}}}}
\def\Ind{{{\operatorname{Ind}}}}

\def\Indw{{{\operatorname{Ind_{\operatorname{w}}}}}}
\def\Inds{{{\operatorname{Ind_{\operatorname{s}}}}}}

\def\Indwz{{{\operatorname{Ind_{\operatorname{w}}^0}}}}
\def\Indsz{{{\operatorname{Ind_{\operatorname{s}}^0}}}}
\def\Indwo{{{\operatorname{Ind_{\operatorname{w}}^1}}}}
\def\Indso{{{\operatorname{Ind_{\operatorname{s}}^1}}}}

\def\LDNmin{{ {\frak L}_{D_N^{\operatorname{min}}} }}
\def\LDtmin{{ {\frak L}_{D_2^{\operatorname{min}}} }}

\def\ODNmin{{ {\mathcal{O}}_{D_N^{\operatorname{min}}} }}
\def\ODtmin{{ {\mathcal{O}}_{D_2^{\operatorname{min}}} }}
\section{Introduction}
Throughout the paper, 
$B(H)$ denotes the $C^*$-algebra of bounded linear operators on
a separable infinite-dimensional 
Hilbert space $H$.
Let us denote by $K(H)$ the $C^*$-subalgebra of $B(H)$
of compact operators on $H$, which is a closed two-sided ideal of 
$B(H)$. 
The quotient $C^*$-algebra $B(H)/K(H)$ is called the Calkin algebra, denoted by $Q(H)$.
The quotient map $B(H)\longrightarrow Q(H)$ is denoted by $\pi.$  
Let $\A$ be a separable unital nuclear $C^*$-algebra.
The extension group $\Ext_*(\A)$ is defined by equivalence classes of 
short exact sequences 
\begin{equation}\label{eq:exact}
0 
\longrightarrow K(H) 
\longrightarrow \E 
\longrightarrow \A 
\longrightarrow 0 \qquad (\text{exact})
\end{equation} 
of $C^*$-algebras for which $K(H)$ is an essential ideal of $\E$.
There are two kinds of extension groups 
$\Exts(\A)$ and $\Extw(\A)$,
called the strong extension group and the weak extension group, respectively.
They are defined by
two different equivalence relations of the short exact sequences
\eqref{eq:exact}, respectively.
The groups have been playing important role as one of K-theoretic invatriant 
 in studying structure theory of $C^*$-algebras, classification of 
 essentially normal operators,  non commutative geometry, and so on. 
In \cite{CK}, Cuntz--Krieger have computed the weak extension group
$\Extw(\OA)$ of Cuntz--Krieger algebra $\OA$ as
$\Extw(\OA) = \Z^N/(I -A)\Z^N$ for 
an $N \times N$ irreducible matrix with entries in $\{0,1\},$
so that they found a lot of examples of 
unital simple purely infinite $C^*$-algebras which are mutually non-isomorphic.
On the other hand, Paschke--Salinas \cite{PS} 
and Pimsner--Popa \cite{PP}
independently computed 
the groups $\Extw(\mathcal{O}_N)$ and 
$\Exts(\mathcal{O}_N)$ for Cuntz algebras as 
$\Extw(\mathcal{O}_N) =\Z/(1 - N)\Z$ and $\Exts(\mathcal{O}_N) =\Z$.
It is a remarkable fact that the group
$\Z^N/(I -A)\Z^N$ appears as 
$\Extw(\OA)$ from the view point of 
classification theory of symbolic dynamical systems,
because the group $\Z^N/(I -A)\Z^N$ is an 'almost' complete invariant 
of flow equivalence of the associated two-sided topological Markov shift
$(\Lambda_A, \sigma_A)$ (\cite{Franks}, \cite{BF}).

In \cite{MaPre2021exts} (cf. \cite{MaJMAA2024}), 
the author recently computed the strong extension group
$\Exts(\OA)$ of Cuntz--Krieger algebras as
$\Exts(\OA) = \Z/(I-\widehat{A})\Z^N$, where
$\widehat{A} = A + R_1 - AR_1$ and $R_1 $ is the $N \times N$ matrix whose first row
is $[1,1,\dots, 1]$ and the other rows are zero vectors. 
In \cite{MaPre2021exts}, 
the author also clarified exact relationship between the two groups
$\Extw(\OA)$ and $\Exts(\OA)$ 
by presented the following cyclic six term exact sequence of abelian groups:
\begin{equation}\label{eq:6termA}
\begin{CD}
 \Ker(I - \widehat{A}: \Z^N \longrightarrow \Z^N) /\iota(\Z) 
 @>{}>> \Ker(I - A: \Z^N \longrightarrow \Z^N) @>>> \Z \\
@AAA  @.   @VV{}V  \\
0 @<{}<<  \Z^N/(I - A)\Z^N  @<{}<<  \Z^N/(I - \widehat{A})\Z^N  
\end{CD}
\end{equation}
by translating an associated  $\K$-homology long exact sequence (cf. \cite{HR}),
where $\iota: \Z \rightarrow \Ker(I - \widehat{A}: \Z^N \longrightarrow \Z^N)$
is given by $\iota(m) = [m,0.\dots,0], \, m \in \Z$.
The importance of the two groups $\Exts(\OA)$ and $\Extw(\OA)$ 
 in the classification theory of $C^*$-algebras was shown in a recent joint paper
 \cite{MaSogabe2024a}. 
 It says that the two groups $\Exts(\OA)$ and $\Extw(\OA)$ are
 a complete set of invariants  of the
 isomorphism class of the Cuntz--Krieger algebra $\OA$.
 That is, the position $[1]_0$ in $\K_0(\OA)$ of the unit of the $\OA$
 is determined by only the group structure of
 the two extension groups $\Exts(\OA)$ and $\Extw(\OA)$.
 The result will be generalized to a wider class of Kirchberg algebras
 in \cite{MaSogabe2024b}.
Hence it seems to be interesting and important to  compute 
the two extension groups $\Exts(\, \cdot \, ) $ and $\Extw(\, \cdot \,)$ 
for more general Kirchberg algebras.

In this paper we will generalize the above computations for  
$\Exts(\OA)$ to  more general $C^*$-algebras 
related to symbolic dynamical systems.
Cuntz--Krieger algebra $\OA$ 
is considered as the $C^*$-algebra associated to topological Markov shift
defined by the matrix $A$.
The $C^*$-algebras 
which we will consider in the present paper
are the ones associated to general subshifts defined in \cite{MaDocMath2002} 
(cf. \cite{MaDocMath2023}).
They are defined by 
$\lambda$-graph systems ${\frak L}$ 
which are labeled Bratteli diagrams with some additional structure,
and are regarded as 
generalizations of finite directed graphs. 
Let $\Sigma$ be a finite set whose cardinality 
$|\Sigma| \ge 2$.
A $\lambda$-graph sytems 
$ \frak L = (V,E,\lambda,\iota)$ over alphabet $\Sigma$
consists of the vertex set 
$V = \cup_{l=0}^\infty V_l$,
edge set
 $E = \cup_{l=0}^\infty E_{l,l+1}$,
 labeling map
$\lambda: E\longrightarrow \Sigma$
and a sequence $\iota =\{ \iota_{l,l+1}\}$ of surjective maps 
$\iota_{l,l+1}:V_{l+1}\longrightarrow V_l$ 
for each $l \in \Zp =\{ 0,1,2, \dots \}$.
Each edge  $e \in E_{l,l+1}$ 
has 
its source vertex $s(e) \in V_l$
and its terminal vertex $t(e) \in V_{l+1}$,
 with its label
$\lambda(e ) \in \Sigma$,
so $\lambda$ is a map from $E$ to $\Sigma$.
The first three $(V,E,\lambda)$ expresses a labeled Bratteli diagram.
Moreover the extra structure 
$\iota_{l,l+1}: V_{l+1} \longrightarrow V_l, \, l \in \Zp$
 of surjective maps satisfies 
 a property called  the local property of $\lambda$-graph system
such that 
for every two vertices
$u \in V_l$ and  $v \in V_{l+2}$,
there exists a bijective correspondence 
preserving their labels between 
the two sets  of edges 
$$
 \{  
e \in E_{l,l+1} \mid s(e) = u, t(e) = \iota(v) \},\quad
\{  e \in E_{l+1,l+2} \mid  \iota(s(e)) = u, t(e) = v \}.
$$
Put 
$m(l) = |V_l|$
the cardinality $| V_l |$ of the finite set $V_l$, so that 
$m(l) \le m(l+1)$.
A $\lambda$-graph system $\L$ is said to be {\it left-resolving}\/ 
if
$e,f \in E_{l,l+1}$ satisfy $\lambda(e) = \lambda(f),
t(e) = t(f)$, 
then  $e=f$.
We henceforth assume that a $\lambda$-graph system is left-resolving.

Any $\lambda$-graph system defines a subshift by 
gathering label sequences appearing on the concatenated labeled edges of the $\lambda$-graph system.
Conversely any subshift can be presented by a $\lambda$-graph system (\cite{MaDocMath1999}).
Hence a $\lambda$-graph system is regarded as a graph presentation of a subshift.

We fix a left-resolving $\lambda$-graph system 
${\frak L} = (V, E, \lambda,\iota)$ over $\Sigma$.
Let us denote by the vertex set $V_l =\{v_1^l,\dots, v_{m(l)}^l\}$.
Let $(A_{l,l+1},I_{l,l+1})_{l\in \Zp}$ 
be the structure matrices of a given $\lambda$-graph system
$\frak L$ defined by for $\alpha\in \Sigma$ 
\begin{align*}
A_{l,l+1}(i, \alpha,j) = &
{\begin{cases}
1 & \text{ if } \exists e \in E_{l,l+1}; s(e) = v_i^l, t(e) = v_j^{l+1}, \lambda(e) =\alpha, \\
0 & \text{ otherwise},  
\end{cases}} \\
 I_{l,l+1}(i,j)  = & 
 {\begin{cases}
1 & \text{ if  } \iota(v_j^{l+1}) = v_i^l, \\
0 & \text{ otherwise},  
\end{cases}}
\end{align*}
so  $A_{l,l+1}, I_{l,l+1}$
are $m(l)\times m(l+1)$ matrices.
\begin{proposition}[{\cite{MaDocMath2002}},\cite{MaJMAA2021}] \label{prop:relationL}
The $C^*$-algebra $\OL$ is realized as a universal $C^*$-algebra 
generated by partial isometries $S_\alpha$ indexed by $\alpha \in \Sigma$
and mutually commuting projections
$E_i^l$ indexed by $v_i^l \in V_l$ 
subject to the following operator relations called $({\frak L})$:
\begin{gather*}
1  = \sum_{\beta\in \Sigma} S_\beta S_\beta^* = \sum_{i=1}^{m(l)} E_i^l, \qquad
E_i^l  = \sum_{j=1}^{m(l+1)} I_{l,l+1}(i,j)E_j^{l+1}, \\ 
S_\alpha^* E_i^l S_\alpha 
= \sum_{j=1}^{m(l+1)} A_{l,l+1}(i,\alpha,j)E_j^{m(l+1)},\quad \alpha \in \Sigma, \,\, 
i=1,\dots,m(l), l\in \Zp.
\end{gather*}
\end{proposition}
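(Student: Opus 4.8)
The plan is to show that the universal $C^*$-algebra $\mathcal{O}_{(\frak L)}$ presented by generators $S_\alpha$ ($\alpha \in \Sigma$) and $E_i^l$ ($v_i^l \in V_l$) subject to the relations $({\frak L})$ is canonically isomorphic to the concretely defined algebra $\OL$ of \cite{MaDocMath2002}. First I would verify that the canonical generating partial isometries and projections of $\OL$ satisfy the relations $({\frak L})$; this is a direct check from the definition of the structure matrices $(A_{l,l+1}, I_{l,l+1})$ together with the local property of the $\lambda$-graph system. Since the norm of each generator is bounded by $1$, the universal $C^*$-algebra $\mathcal{O}_{(\frak L)}$ exists, and since $\OL$ is generated by the images of the generators, its universal property yields a $*$-homomorphism $\Phi \colon \mathcal{O}_{(\frak L)} \longrightarrow \OL$ sending generators to generators. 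This $\Phi$ is surjective, so it remains to prove that $\Phi$ is injective.

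For injectivity I would use a gauge-invariance uniqueness argument. The assignment $S_\alpha \mapsto z S_\alpha$, $E_i^l \mapsto E_i^l$ preserves the relations $({\frak L})$ for every $z \in \mathbb{T}$, so by universality it defines a strongly continuous gauge action $\gamma \colon \mathbb{T} \longrightarrow \Aut(\mathcal{O}_{(\frak L)})$; integrating over $\mathbb{T}$ produces a faithful conditional expectation $P \colon \mathcal{O}_{(\frak L)} \longrightarrow \F$ onto the fixed-point algebra $\F = \mathcal{O}_{(\frak L)}^{\gamma}$. The concrete algebra $\OL$ carries the corresponding gauge action whose fixed-point algebra is the AF core $\AL$, and $\Phi$ intertwines the two actions, hence restricts to a surjection $\F \longrightarrow \AL$ that is compatible with the conditional expectations.

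The heart of the argument is to identify $\F$ as an AF algebra and to show that the restriction $\Phi|_{\F}$ is injective. Here I would realize $\F$ as the closure of an increasing union of finite-dimensional algebras spanned by the monomials $S_\mu E_i^l S_\nu^*$ with $|\mu| = |\nu|$, whose matrix-unit structure and connecting embeddings are governed by the structure matrices $A_{l,l+1}$ and $I_{l,l+1}$ together with the local property. The local property --- the label-preserving bijection between the two edge sets at consecutive levels --- is exactly what makes the relations consistent across $l$ and forces the relative dimensions inside $\F$ to agree with those inside $\AL$; this is where I expect the main difficulty to lie, since one must track the projections $E_i^l$ through the maps $\iota_{l,l+1}$ with care. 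Once $\Phi|_{\F}$ is established as a faithful homomorphism of AF algebras, injectivity of $\Phi$ on all of $\mathcal{O}_{(\frak L)}$ follows formally: if $x \ge 0$ satisfies $\Phi(x) = 0$, then $\Phi(P(x)) = 0$ because $\Phi$ intertwines the expectations, whence $P(x) = 0$ by faithfulness of $\Phi|_{\F}$, and therefore $x = 0$ by faithfulness of $P$. This yields the desired identification $\mathcal{O}_{(\frak L)} \cong \OL$.
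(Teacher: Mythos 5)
The paper itself contains no proof of this proposition: it is imported wholesale from \cite{MaDocMath2002} and \cite{MaJMAA2021}, and your gauge-invariant uniqueness outline is in fact the standard strategy of those references (note that condition $(I)$ is \emph{not} needed for the universal-property statement, precisely because gauge equivariance substitutes for it — you use this correctly). So the overall architecture of your proposal is sound; the problems are in the two places where you would have to do actual work.

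First, you conflate two different subalgebras. In this paper $\AL$ denotes the \emph{commutative} AF algebra generated by the projections $E_i^l$ alone; the fixed-point algebra of $\OL$ under the gauge action is the noncommutative AF core $\overline{\operatorname{span}}\{ S_\mu E_i^l S_\nu^* : |\mu| = |\nu| \}$, which strictly contains $\AL$ in general. Hence ``$\Phi$ restricts to a surjection $\F \longrightarrow \AL$'' is not the correct statement, and your later comparison of ``relative dimensions inside $\F$ with those inside $\AL$'' compares the wrong pair of algebras: the comparison must be with the concrete gauge core of $\OL$, not with its diagonal. Second, and more substantively, the step you flag as ``the heart of the argument'' — faithfulness of $\Phi|_{\F}$ — is announced but never supplied, and it cannot be extracted from the relations $(\frak L)$ alone. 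What is actually required is two-sided: (a) on the universal side, that the span of the monomials $S_\mu E_i^l S_\nu^*$ with $|\mu| = |\nu| = k$ is closed under multiplication and the finite-dimensional layers embed into one another according to $(A_{l,l+1}, I_{l,l+1})$ — here one needs the left-resolving hypothesis (which forces $S_\alpha^* S_\alpha$ to be a projection commuting with the $E_j^{l+1}$, and $S_\alpha^* S_\beta = 0$ for $\alpha \neq \beta$ via $1 = \sum_\beta S_\beta S_\beta^*$) together with the local property to even get the matrix-unit calculus started; and (b) on the concrete side, a \emph{nonvanishing} input: every admissible candidate matrix unit $S_\mu E_i^l S_\mu^*$ must be nonzero in $\OL$, so that $\Phi$ is nonzero on every simple summand of every layer and hence injective layer by layer. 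Point (b) does not follow from any amount of dimension bookkeeping in the universal algebra; it comes from the groupoid (path-space) realization $C^*(G_{\frak L})$ of \cite{MaDocMath2002}, which is exactly the anchor your sketch omits. With (a) and (b) in place, your final expectation-intertwining argument for injectivity of $\Phi$ is correct as stated.
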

The $C^*$-algebra $\OL$ 
was primarily constructed in \cite{MaDocMath2002} as the $C^*$-algebra $C^*(G_{\frak L})$
of an \'etale amenable groupoid $G_{\frak L}$ associated to the $\lambda$-graph system $\frak L$. 
The class of the $C^*$-algebras $\OL$ generalize the class of Cuntz--Krieger algebras $\OA$.
If $\frak L$ satisfies condition $(I)$, 
it is a unique $C^*$-algebra subject to the above operator relations $({\frak L})$.
It becomes a simple $C^*$-algebra if ${\frak L}$ satisfies condition $(I)$
and is irreducible 
({\cite{MaDocMath2002}},\cite{MaJMAA2021}).
As in \cite{MaJMAA2021}, 
lots  of the $C^*$-algebras $\OL$ are unital Kirchberg algebras.

Let $(A_{l,l+1},I_{l,l+1})_{l\in \Zp}$ 
be the structure matrices of a $\lambda$-graph system
${\frak L} = (V, E, \lambda,\iota)$.
Let
$$
{\Z}_I: =
\{ ([n_i^l]_{i=1}^{m(l)})_{l\in \Zp} \in \prod_{l\in \Zp} \Z^{m(l)}
\mid I_{l,l+1}[n_j^l]_{i=1}^{m(l+1)}= [n_i^l]_{i=1}^{m(l)}, l \in \Zp \}
$$
the projective limit
$\varprojlim \{I_{l,l+1}: 
{\Z}^{m(l+1)} \rightarrow {\Z}^{m(l)} \}
$ of the projective system 
$I_{l.l+1}: \Z^{m(l+1)}\rightarrow \Z^{m(l)}, l\in \Zp$
of abelian groups.
The subgroup 
${\Z}_{I,0}$ of ${\Z}_I$ is defined by 
\begin{equation*}
\Z_{I,0} = \{([n_i^l]_{i=1}^{m(l)})_{l\in \Zp} \in \Z_I 
\mid \sum_{i=1}^{m(l)} n_i^l =0, l\in \Zp \}.
\end{equation*}
The family $I_{l,l+1}-A_{l,l+1}, l \in \Zp$ 
of $m(l)\times m(l+1)$ matrices $I_{l,l+1}-A_{l,l+1}$
naturally give rise to an endomorphism on $\Z_I$ denoted by $I-A_\L$.
It satisfies 
$(I - A_\L)(\Z_{I,0}) \subset \Z_I$.
Following Higson--Roe \cite{HR}, 
the reduced $\K$-homology groups
$\widetilde{K}^i(\A), i=0,1$
and the unreduced $\K$-homology groups
${K}^i(\A), i=0,1$ 
for a separable $C^*$-algebra $\A$ are defined 
by using the dual $C^*$-algebra $\frak{D}(\A)$ (cf. \cite{Paschke1981})
for $\A$ such that 
$$
\widetilde{\K}^p(\A) = \K_{1-p}(\frak{D}(\A)) \quad\text{ and } \quad
\K^p(\A) = \K_{1-p}(\frak{D}(\widetilde{\A}))
$$
for $p =0,1$, where 
$\widetilde{\A}$ is the unitization of $\A$. 
They satisfy
$$
\widetilde{\K}^1(\A) = \Exts(\A) \quad\text{ and } \quad
\K^1(\A) = \Extw(\A).
$$
We write 
$\widetilde{\K}^i(\A) =: \Extsi(\A) 
$
and 
${\K}^i(\A)=:\Extwi(\A)
$
for $i=0,1$.
There exists a cyclic six-term exact sequence:
\begin{equation}\label{eq:Khom6}
\begin{CD}
 \Extsz(\A) @>>> \Extwz(\A) @>>> \Z \\
@AAA @. @VVV \\
0 @<<< \Extwo(\A) @<<< \Extso(\A)  
 \end{CD}
\end{equation}
for a separable unital nuclear $C^*$-algebra $\A$
 (\cite[5.2.10 Proposition]{HR}).

In the first half of the  paper, we will prove the following theorem.
\begin{theorem}[{Theorem \ref{thm:main11}}]\label{thm:main1}
Let $\L$ be a left-resolving $\lambda$-graph system over $\Sigma$.
There exist isomorphisms
\begin{align*}
\Indwo: \Extwo(\OL) & \longrightarrow \Z_I/(I - A_\L)\Z_I,\\
\Indso: \Extso(\OL) & \longrightarrow \Z_I/(I - A_\L)\Z_{I,0},\\
\Indwz: \Extwz(\OL) & \longrightarrow \Ker(I - A_\L:\Z_I\longrightarrow \Z_I),\\
\Indsz: \Extsz(\OL) & \longrightarrow \Ker(I - A_\L: \Z_{I,0}\longrightarrow \Z_I)
\end{align*} 
of abelian groups such that the $\K$-homology long exact sequence
\eqref{eq:Khom6} for $\A = \OL$ is given by
the cyclic six-term exact sequence
\begin{equation}\label{eq:6termL}
\begin{CD}
 \Ker(I - A_\L: \Z_{I,0}\longrightarrow \Z_I) 
 @>{}>> \Ker(I - A_\L: \Z_I\longrightarrow \Z_I) @>>> \Z \\
@AAA  @.   @VV{}V  \\
0 @<<< \Z_I/(I - A_\L)\Z_I  @<{}<< \Z_I/(I - A_\L)\Z_{I,0}.
\end{CD}
\end{equation}
\end{theorem}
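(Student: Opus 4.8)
The plan is to identify the given six-term sequence \eqref{eq:Khom6} for $\A = \OL$ with the homology long exact sequence of an explicit short exact sequence of two-term complexes of abelian groups. Since \eqref{eq:Khom6} is already available from Higson--Roe, the real content is to compute its four nontrivial terms, namely the unreduced groups $\K^0(\OL) = \Extwz(\OL)$, $\K^1(\OL) = \Extwo(\OL)$ and the reduced groups $\widetilde{\K}^0(\OL) = \Extsz(\OL)$, $\widetilde{\K}^1(\OL) = \Extso(\OL)$, and to identify the connecting maps.

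First I would analyse the commutative AF subalgebra $\AL := C^*(E_i^l : i, l) \subseteq \OL$ generated by the projections of Proposition~\ref{prop:relationL}, which is the inductive limit of the finite-dimensional algebras $\C^{m(l)}$ along the embeddings dictated by $E_i^l = \sum_j I_{l,l+1}(i,j)E_j^{l+1}$. Its K-homology is computed from the Milnor $\varprojlim^1$ sequence: since $\K^0(\C^{m(l)}) = \Z^{m(l)}$ with transition maps $I_{l,l+1}$ and $\K^1(\C^{m(l)}) = 0$, one gets $\K^0(\AL) = \varprojlim(\Z^{m(l)}, I_{l,l+1}) = \Z_I$, while surjectivity of each $I_{l,l+1}$ (a consequence of surjectivity of $\iota_{l,l+1}$) gives the Mittag--Leffler condition and hence $\K^1(\AL) = {\varprojlim}^1 = 0$. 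The unit inclusion $\C \to \AL$ induces the augmentation $\epsilon([n_i^l]) = \sum_i n_i^l$, which is level-independent because every column sum of $I_{l,l+1}$ equals $1$; thus the reduced group is $\widetilde{\K}^0(\AL) = \Ker\epsilon = \Z_{I,0}$.

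Next I would invoke the K-homology six-term exact sequence attached to the structure of $\OL$ over $\AL$ (the K-homology analogue, dual to the K-theory sequence of \cite{MaDocMath2002}),
\begin{equation*}
\cdots \to \K^i(\OL) \to \K^i(\AL) \xrightarrow{\;\id - A_\L\;} \K^i(\AL) \to \K^{i+1}(\OL) \to \cdots,
\end{equation*}
whose middle map I must identify with $\id - A_\L$. On the generators $[E_i^l] \in \K^0(\AL) = \Z_I$ the component coming from the embedding structure acts as the identity of $\Z_I$ (since $\Z_I$ is the projective limit along the very maps $I_{l,l+1}$), while the component coming from the canonical transfer $E_i^l \mapsto \sum_\alpha S_\alpha^* E_i^l S_\alpha = \sum_j A_{l,l+1}(i,j)E_j^{l+1}$ produces the matrices $A_{l,l+1}$, so the difference is exactly $\id - A_\L$. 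Feeding $\K^0(\AL)=\Z_I$ and $\K^1(\AL)=0$ into the sequence yields at once $\Extwz(\OL) = \Ker(\id - A_\L\colon \Z_I \to \Z_I)$ and $\Extwo(\OL) = \Z_I/(\id - A_\L)\Z_I$, giving $\Indwz$ and $\Indwo$.

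The strong groups come from running the same analysis in reduced K-homology, and here lies the main subtlety. Because the transfer map is not unital, $\id - A_\L$ does not respect $\epsilon$: it carries $\Z_{I,0}$ into $\Z_I$ but not into $\Z_{I,0}$. Consequently the reduced computation does not produce an endomorphism of $\widetilde{\K}^0(\AL) = \Z_{I,0}$ but rather a map $\Z_{I,0} \to \Z_I$, and the reduced groups become $\Extsz(\OL) = \Ker(\id - A_\L\colon \Z_{I,0}\to\Z_I)$ and $\Extso(\OL) = \Z_I/(\id - A_\L)\Z_{I,0}$, i.e.\ $\Indsz$ and $\Indso$. Finally, to recover \eqref{eq:6termL} and match it with \eqref{eq:Khom6}, I would encode the reduced-to-unreduced comparison as the short exact sequence of two-term complexes
\begin{equation*}
0 \to \bigl[\Z_{I,0}\xrightarrow{\id - A_\L}\Z_I\bigr] \to \bigl[\Z_I\xrightarrow{\id - A_\L}\Z_I\bigr] \to \bigl[\Z\xrightarrow{\;0\;}0\bigr]\to 0,
\end{equation*}
whose quotient complex is $\Z_I/\Z_{I,0}\cong\Z$ concentrated in degree one; its snake-lemma long exact sequence is exactly \eqref{eq:6termL}, and by naturality of the above K-homology sequence under the unitization it coincides term-by-term with \eqref{eq:Khom6}. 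I expect the principal obstacle to be the rigorous construction of this K-homology exact sequence together with the precise identification of its boundary map as $\id - A_\L$ at the level of the (far from finitely generated) projective-limit group $\Z_I$, and, in tandem, the correct treatment of the reduced case where the failure of $\id - A_\L$ to preserve the augmentation must be tracked through the whole diagram.
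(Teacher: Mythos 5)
Your unreduced computations are sound and agree with what the paper uses: the Milnor-sequence argument giving $\K^0(\AL)=\Z_I$ and $\K^1(\AL)=0$, the Pimsner--Voiculescu-type K-homology sequence giving $\Extwz(\OL)=\Ker(I-A_\L:\Z_I\to\Z_I)$ and $\Extwo(\OL)=\Z_I/(I-A_\L)\Z_I$ (the paper does not reprove this but quotes it from \cite{MaKtheory2001}), and the identification of $\Extsz(\OL)$ with $\Ker(I-A_\L:\Z_{I,0}\to\Z_I)$ as the kernel of $\iota_{\mathbb{C}}^*$ in \eqref{eq:Khomo}, which is exactly the paper's \eqref{eq:Ktilde1}. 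The genuine gap is $\Extso(\OL)$. There is no ``reduced K-homology Pimsner--Voiculescu sequence'' of the kind you appeal to: since the transfer $a\mapsto\sum_{\alpha}S_\alpha^*aS_\alpha$ is not unital, $I-A_\L$ is not an endomorphism of $\widetilde{\K}^0(\AL)=\Z_{I,0}$ (as you yourself observe), and a map $\Z_{I,0}\to\Z_I$ mixing a reduced and an unreduced group cannot occur as the middle map of an exact sequence in any single (co)homology theory. So the formula $\Extso(\OL)\cong\Z_I/(I-A_\L)\Z_{I,0}$ is precisely what has to be proved; your closing sentence flags this as ``the principal obstacle'' but supplies no mechanism for it.

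Naturality cannot close this gap. A priori, \eqref{eq:Khomo} yields only a short exact sequence $0\to\Z/s_\L(\Ker(I-A_\L))\to\Extso(\OL)\to\Z_I/(I-A_\L)\Z_I\to 0$, while your snake-lemma sequence of two-term complexes (which reproduces the paper's Lemma \ref{lem:slambda}) gives an extension with the same end terms and middle term $\Z_I/(I-A_\L)\Z_{I,0}$; but an abelian group extension is not determined by its kernel and cokernel, so ``coincides term-by-term by naturality'' has no content until you exhibit an explicit homomorphism $\Extso(\OL)\to\Z_I/(I-A_\L)\Z_{I,0}$ compatible with both sequences. Constructing that arrow is the bulk of the paper's Section 2: for an extension $\sigma$ one picks a trivial extension $\tau$ with $\sigma|_{\AL}=\pi\circ\tau|_{\AL}$ (BDF triviality on the commutative AF algebra $\AL$ plus Voiculescu, Lemma \ref{lem:lem5.1}), forms the unitary $U_{\sigma,\tilde{\tau}}=\sum_{\alpha\in\Sigma}\sigma(S_\alpha)\,\pi(\tau(S_\alpha^*))$ commuting with $\pi(\tau(\AL))$ (Lemma \ref{lem:5.3}), and records its Fredholm indices against the projections $E_i^l$, landing in $\Z_I$; Lemmas \ref{lem:5.4}--\ref{lem:5.7} show that changing $\tau$ shifts this index by exactly an element of $(I-A_\L)\Z_{I,0}$ --- this is where $\Z_{I,0}$ genuinely enters, via $\ind[(\pi(V),\tau|_{\AL})]\in\Z_{I,0}$ --- producing the well-defined homomorphism $\Inds$ of Proposition \ref{prop:5.8}; Lemma \ref{lem:29p} then verifies $\Inds\circ\iota_\L=\hat{\iota}_\L$, and the five lemma applied to the resulting comparison of \eqref{eq:Khomo} with the algebraic sequence upgrades $\Indso$ to an isomorphism. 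Without this index construction your comparison diagram has no middle vertical arrow, so the proposal as written does not prove the theorem.
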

In the second half of the paper, 
we will compute the extension groups $\Extsi(\OL)$
and the cyclic six-term exact sequence \eqref{eq:6termL}
for two kinds of examples of the $\lambda$-graph systems associated to subshifts.
There are lots of examples of subshifts which are not topological Markov shifts.
The first example of subshifts is the Markov coded systems $S_G$  studied in 
\cite{MaActaSci2014}.
The Markov coded system $S_G$ is defined by a finite directed graph
$G = (V, E)$ admitting multiple directed edges from a vertex to another vertex. 
Let $\{v_1,\dots, v_N\}$ be the vertex set $V$.
Let us denote by $A=[A(i,j)]_{i,j=1}^N (= A_G)$ the $N \times N$ transition matrix
of the directed graph such that $A(i,j)$ denotes the number of the directed edges from 
the vertex $v_i$ to the vertex $v_j$.  
In \cite{MaActaSci2014}, it was proved that the $C^*$-algebra 
$\OSG$ for the canonically constructed $\lambda$-graph system $\LSG$ for the subshift $S_G$ 
is a unital purely infinite simple nuclear $C^*$-algebra,
and its $\K$-groups and the weak extension groups were computed.
Since the torsion free part of $\K_0(\OSG)$ is not isomorphic to $\K_1(\OSG)$,
the $C^*$-algebras $\OSG$ are never stably isomorphic to any of Cuntz--Krieger algebras.

The second example of subshifts are the class of Dyck shifts $D_N, 2\le N \in \N$.
They are interesting family of subshifts 
coming from automata theory and formal language theory
which are located in the subshifts far from topological Markov shifts.
They have minimal presentations of $\lambda$-graph systems which yield 
unital simple purely infinite $C^*$-algebras 
having infinite generators of its $K$-theory groups,
so that they do not belong to the class of Cuntz--Krieger algebras.
We will compute the strong extension groups
for the two kinds of examples $\OSG$ and $\ODNmin$ of  $C^*$-algebras
in the following way.
\begin{theorem}[{Proposition \ref{prop:MarkovcodeExts}, Corollary \ref{cor:DyckNExts}}] 
\hspace{6cm}
\begin{enumerate}
\renewcommand{\theenumi}{\roman{enumi}}
\renewcommand{\labelenumi}{\textup{(\theenumi)}}
\item
Assume that the transition matrix $A$ of a finite directed graph $G$ is aperiodic.
Let $\OSG$ be the simple purely infinite $C^*$-algebra associated with the canonical 
$\lambda$-graph system for the Markov coded system $S_G$.
Then we have
\begin{align*}
\Extwz(\OSG) \cong & (\Ker(A) \text{ in } \Z^N) \oplus \Z^N, \qquad
\Extwo(\OSG)\cong \Z^N/A\Z^N, \\
\Extsz(\OSG)  \cong &  (\Ker(A) \text{ in } \Z^N) \oplus \Z^{N-1}, \quad 
\Extso(\OSG) \cong \Z^N/A\Z^N.
\end{align*}
\item
Let $\ODNmin$ be the simple purely infinite $C^*$-algebra
associated with the minimal presentation $ {\frak L}_{D_N^\min}$ of the Dyck shift $D_N$.
Then we have
\begin{align*}
\Extwz(\ODNmin) \cong & \Hom(C(\calC, \Z),\Z),\qquad
\Extwo(\ODNmin)\cong \Z/N\Z, \\
\Extsz(\ODNmin) \cong & \Hom(C(\calC, \Z),\Z), \qquad  
\Extso(\ODNmin)\cong \Z.
\end{align*} 
where $C(\calC, \Z)$ 
denotes the abelian group of integer valued continuous functions on a Cantor discontinuum $\calC$.
\end{enumerate}
\end{theorem}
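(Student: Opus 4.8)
The plan is to read all four extension groups off Theorem~\ref{thm:main1}, which identifies them with the purely algebraic invariants attached to the structure matrices $(A_{l,l+1},I_{l,l+1})_{l\in\Zp}$ of the relevant $\lambda$-graph system, and then to carry out the two computations. The organizing remark is that the assignment $([n_i^l]_{i=1}^{m(l)})_{l\in\Zp}\mapsto\sum_{i=1}^{m(l)}n_i^l$ is a well-defined homomorphism $\Z_I\to\Z$: it is independent of $l$ because every column of $I_{l,l+1}$ sums to $1$ (each $v_j^{l+1}$ has a single image under $\iota_{l,l+1}$), and its kernel is exactly $\Z_{I,0}$. Under the isomorphisms of Theorem~\ref{thm:main1} this \emph{sum map} is the connecting homomorphism $\Ker(I-A_\L\colon\Z_I\to\Z_I)\to\Z$ in \eqref{eq:6termL}. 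Hence the entire six-term sequence, and in particular the passage from the weak to the strong groups, is governed by the restriction of the sum map to $\Ker(I-A_\L\colon\Z_I\to\Z_I)$, and I only need the weak groups together with the image of this restriction.

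For the Markov coded system $S_G$ I would first recall from \cite{MaActaSci2014} the structure matrices of the canonical $\lambda$-graph system $\LSG$ and the resulting weak groups, which by Theorem~\ref{thm:main1} are $\Extwz(\OSG)\cong\Ker(A\colon\Z^N\to\Z^N)\oplus\Z^N$ and $\Extwo(\OSG)\cong\Z^N/A\Z^N$. It then remains to evaluate the sum map on the $\Z^N$ summand of $\Ker(I-A_\L\colon\Z_I\to\Z_I)$: I expect it to annihilate the $\Ker(A)$ part and to carry the $\Z^N$ part onto $\Z$ with kernel $\Z^{N-1}$. Exactness of \eqref{eq:6termL} then yields $\Extsz(\OSG)\cong\Ker(A)\oplus\Z^{N-1}$, and, the sum map being surjective, the map $\Z\to\Extso(\OSG)$ is zero, so $\Extso(\OSG)\cong\Extwo(\OSG)\cong\Z^N/A\Z^N$.

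For the Dyck shift $D_N$ I would describe the minimal presentation $\LDNmin$, whose vertex sets grow so that the projective system $\{I_{l,l+1}\}$ is built over a Cantor discontinuum $\calC$; identifying the inverse limit then realizes $\Z_I\cong\Hom(C(\calC,\Z),\Z)$. Computing the action of $I-A_\L$ on this group gives the weak groups $\Extwz(\ODNmin)\cong\Hom(C(\calC,\Z),\Z)$ and $\Extwo(\ODNmin)\cong\Z/N\Z$. The decisive point is that the sum map vanishes identically on $\Ker(I-A_\L\colon\Z_I\to\Z_I)$, so the connecting homomorphism in \eqref{eq:6termL} is zero; this forces $\Extsz(\ODNmin)\cong\Extwz(\ODNmin)$ and leaves a short exact sequence $0\to\Z\to\Extso(\ODNmin)\to\Z/N\Z\to0$. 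I would finish by identifying this with the non-split extension $0\to\Z\xrightarrow{\times N}\Z\to\Z/N\Z\to0$, whence $\Extso(\ODNmin)\cong\Z$.

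I expect the Dyck-shift computation to be the main obstacle. Realizing the projective limit $\Z_I$ concretely as $\Hom(C(\calC,\Z),\Z)$ and tracking the endomorphism $I-A_\L$ on this infinitely generated group demands care, and the conclusion $\Extso(\ODNmin)\cong\Z$ rests on showing the extension of $\Z/N\Z$ by $\Z$ is non-split rather than $\Z\oplus\Z/N\Z$; pinning down the precise image of $\Z\to\Extso(\ODNmin)$, equivalently the position of the class of the unit, is the crux. By contrast the Markov coded case, once the weak groups and the behaviour of the sum map are in hand, is a finite-rank bookkeeping.
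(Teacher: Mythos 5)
Your part (i) follows essentially the paper's own route: quote the weak groups from \cite{MaActaSci2014}, identify the connecting map in \eqref{eq:6termL} with the sum map $s_\L$ (Lemma \ref{lem:slambda}), and once $s_\L$ is surjective on $\Ker(I-A_\L:\Z_I\to\Z_I)$ the six-term sequence breaks into the split sequence $0\to\Extsz(\OSG)\to\Extwz(\OSG)\to\Z\to 0$ and the isomorphism $\Extso(\OSG)\cong\Extwo(\OSG)$, giving the stated answers. What you leave as an expectation --- the surjectivity --- is exactly the content of the paper's computation, carried out by explicitly solving $\Ker(M_{3,4}-I_{3,4})$ at level $l=3$; incidentally, that kernel is parametrized through $\Ker(A^t)$ together with a free $\Z^N$-parameter, and the sum map need not annihilate the kernel summand as you predict, but since only surjectivity enters the argument this inaccuracy is harmless.

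Part (ii) contains a genuine gap at precisely the step you flag as the crux. You correctly note that $s_\L$ vanishes on $\Ker(I-A_\L)$ (each column of $I_{l,l+1}-A_{l,l+1}$ sums to $-N$), whence $\Extsz(\ODNmin)\cong\Extwz(\ODNmin)$ and an exact sequence $0\to\Z\to\Extso(\ODNmin)\to\Z/N\Z\to 0$; but you offer no mechanism for determining its class, and the dichotomy you pose --- ``non-split versus $\Z\oplus\Z/N\Z$'' --- is insufficient for composite $N$: since $\Ext_{\Z}^1(\Z/N\Z,\Z)\cong\Z/N\Z$, an extension of class $k$ gives $\Extso\cong\Z\oplus\Z/\gcd(k,N)\Z$, so one must show $k$ is a unit mod $N$, and nothing in your setup computes $k$. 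The paper never faces this problem because it runs the deduction in the opposite order: Lemma \ref{lem:IAZI} proves $(I-A_\L)\Z_{I,0}=\Z_{I,0}$ directly (for $N=2$, by solving the linear systems coming from the explicit matrices $J_{l,l+1},K_{l,l+1},L_{l,l+1}$, the general case being asserted as the same computation), so that $\Extso=\Z_I/(I-A_\L)\Z_{I,0}=\Z_I/\Z_{I,0}\cong\Z$ immediately via the sum map (Theorem \ref{thm:mainextsdtwo}); Proposition \ref{prop:extsZ} then identifies $\hat{\iota}_\L$ with multiplication by $-N$, which even re-derives $\Extwo(\ODNmin)\cong\Z/N\Z$ without the UCT. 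If you wish to keep your order of deduction, the gap can be closed as follows: since $\xi((I-A_\L)x)=-N\,\xi(x)$, the sum map descends to a split surjection $\bar{\xi}:\Extso\to\Z$, so $\Extso\cong\Z\oplus Q$ with $Q=\Z_{I,0}/(I-A_\L)\Z_{I,0}$; as $\bar{\xi}\circ\hat{\iota}_\L$ is multiplication by $-N$, one gets $\hat{\iota}_\L(\Z)\cap Q=0$, hence $Q$ embeds into $\Extso/\hat{\iota}_\L(\Z)\cong\Z/N\Z$ and is finite, and then $|\Extso/\hat{\iota}_\L(\Z)|=N|Q|=N$ forces $Q=0$. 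Either way, a computation beyond the six-term formalism (the paper's Lemma \ref{lem:IAZI}, or the counting just sketched) is indispensable, and your proposal as written does not contain it.
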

We note that the computations
$
\Extwz(\OSG) \cong (\Ker(A) \text{ in } \Z^N) \oplus \Z^N,
\Extwo\OSG)\cong \Z^N/A\Z^N
$
and
$
\Extwz(\ODNmin) \cong \Hom(C(\calC, \Z),\Z),
\Extwo(\ODNmin)\cong \Z/N\Z
$
are already obtained in \cite{MaActaSci2014} and \cite{KMDocMath2003},
respectively.
In this paper, 
we will compute the strong extension groups
$\Extsz(\OSG), \Extso(\OSG)$
and
$\Extsz(\ODNmin)$ and $\Extso(\ODNmin).$

\section{$\Ext_*(\OL)$ and Fredholm indices}
In what follows, $H$ denotes a separable infinite dimensional Hilbert space.
Let $\A$ be a separable unital nuclear $C^*$-algebra.
 An extension means a unital $*$-monomorphism 
 $\sigma:\A\longrightarrow Q(H)$ from $\A$ to the Calkin algebra $Q(H)$.
 Two extensions
$\sigma_i : \A \longrightarrow Q(H), i=1,2$
are said to be 
weakly equivalent 
if there exists a unitary $u\in Q(H)$ 
such that 
$\sigma_2(a) = u \sigma_1(a) u^*, a \in \A$. 
If in particular the above unitary $u$ in $Q(H)$ is taken as $u=\pi(U)$ 
for some unitary $U \in B(H)$,
  then the extensions
$\tau_i : \A \longrightarrow Q(H), i=1,2$
are said to be  
strongly equivalent.
Let us denote by $\Ew(\A)$ and $\Es(\A)$ 
the weak equivalence classes 
and the strong equivalence classes of extensions of $\A$, respectively.
The class of a unital $*$-monomorphism
$\tau:\A\longrightarrow Q(H)$ in $\Es(\A)$ is denoted by 
$[\tau]_s$, and similarly 
$[\tau]_w$ in $\Ew(\A).$  
Fix an identification between $H \oplus H$ and $H$.
Through  an embedding
$Q(H) \oplus Q(H) \hookrightarrow Q(H)$
defined by the identification, 
the sum of extensions 
$\tau_1 \oplus \tau_2$ 
are defined by the direct sum $\tau_1\oplus \tau_2$.
It is well-known that both $\Es(\A)$ and $\Ew(\A)$ 
become abelian  semigroups,
and also they are abelian groups for nuclear $C^*$-algebra $\A$ 
(cf. \cite{Arveson}, \cite{BDF}, \cite{CE}, \cite{Douglas}, etc.).
They are called the strong extension group for $\A$ 
and the weak extension group for $\A$, respectively.
Let us denote by $q_A: \Es(\A) \longrightarrow \Ew(\A)$
the  natural quotient map.
As in \cite{HR} and \cite{PP}, 
there exists a homomorphism
$\iota_\A: \Z \longrightarrow \Es(\A)$
such that the sequence
\begin{equation}\label{eq:ZSW}
\Z
 \overset{\iota_\A}{\longrightarrow} \Es(\A)
 \overset{q_\A}{\longrightarrow} \Ew(\A)
\end{equation}
is exact at the middle, so
the quotient group $\Es(\A)/\iota_\A(\Z)$ 
is isomorphic to $\Ew(\A).$

For projections $e \in Q(H) = B(H)/K(H)$
and $E \in B(H)$ with $\pi(E) =e$,
let $t \in Q(H)$ be an element such that 
$e t e$ is invertible in $eQ(H)e.$
Let $T \in B(H)$ be a lift of $t$, which satisfies
$\pi(T) = t.$  
The Fredholm index of $ETE$ in $EH$ is
denoted by $\ind_et$.
The integer $\ind_et$ does not depend on the choice of
$E$ and $T$.

Let $\L =(V, E,\lambda,\iota)$ be
 a left-resolving $\lambda$-graph system over $\Sigma.$
Let $S_\alpha, \alpha \in \Sigma$ and $E_i^l, v_i^l \in V_l$ 
be the generating partial isometries and mutually commuting projections satisfying 
the relations $({\frak L})$ in Proposition \ref{prop:relationL}.
Let us denote by $\A_l$ the commutative $C^*$-subalgebra 
of $\OL$ generated by the projections
$E_i^l, i=1,\dots,m(l)$.
The commutative $C^*$-subalgebra 
$\AL$ is defined by the one generated by $\A_l, l\in \Zp$.
Since $\A_l \subset \A_{l+1}, l \in \Zp$,
The algebra $\AL$ is a commutative AF-algebra.
Hence the projections 
 $E_1^l, \dots, E_{m(l)}^l$ are  the  minimal projections 
in $\A_l$ satisfying 
$E_i^l = \sum_{j=1}^{m(l+1)}I_{l,l+1}(i,j) E_j^{l+1}$ 
for $i=1,\dots,m(l)$.

\begin{lemma}\label{lem:lem5.1}
For an extension $\sigma:\OL\longrightarrow Q(H)$,
there exists a trivial extension
$\tau:\OL\longrightarrow B(H)$ such that 
$\sigma|_{\AL} = \pi\circ \tau|_{\AL}$.
\end{lemma}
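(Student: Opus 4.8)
The plan is to realize $\tau$ in two movements: first show that the restriction $\sigma|_{\AL}$ is itself an \emph{ample trivial} extension of the commutative AF-algebra $\AL$, and then transport a standard ample trivial representation of $\OL$ by a single unitary of $B(H)$ so that its restriction to $\AL$ becomes exactly $\sigma|_{\AL}$. The whole content is concentrated in the first movement; the second is a formal consequence of Voiculescu's theorem together with a conjugation.

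First I would produce a genuine lift of $\sigma|_{\AL}$, that is, mutually orthogonal projections $P_i^l\in B(H)$ with $\pi(P_i^l)=\sigma(E_i^l)$ obeying the very relations defining $\AL$, namely $\sum_i P_i^l=1$ and $P_i^l=\sum_j I_{l,l+1}(i,j)P_j^{l+1}$. Since there is no top level, this lift must be built from level $0$ upward by refinement. Suppose $\{\sigma(E_i^l)\}_i$ has been lifted to orthogonal projections $\{P_i^l\}_i$ summing to $1$. Because $\sigma$ is injective and unital, each $\sigma(E_i^l)$ is nonzero, so $P_i^l\notin K(H)$ has infinite rank, and the corner $P_i^l B(H)P_i^l$ surjects onto $\sigma(E_i^l)Q(H)\sigma(E_i^l)$ with kernel $K(P_i^l H)$, i.e. it is again a Calkin-type quotient. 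In that corner the projections $\{\sigma(E_j^{l+1}):\iota(v_j^{l+1})=v_i^l\}$ are finitely many orthogonal projections summing to $\sigma(E_i^l)$; I lift all but one of them to orthogonal subprojections of $P_i^l$ and then \emph{define the last one to be the exact orthogonal complement} of these inside $P_i^l$. This forces $\sum_j I_{l,l+1}(i,j)P_j^{l+1}=P_i^l$ on the nose rather than merely modulo $K(H)$. As the fibres $\{j:\iota(v_j^{l+1})=v_i^l\}$ partition $\{1,\dots,m(l+1)\}$, doing this in every corner produces all of $\{P_j^{l+1}\}_j$, orthogonal and summing to $1$. The result is a unital $*$-homomorphism $\phi:\AL\to B(H)$ with $\pi\circ\phi=\sigma|_{\AL}$; and since every nonzero projection of $\AL$ lies in some $\A_l$ and hence maps to a nonempty finite sum of the infinite-rank $P_i^l$, we get $\phi(\AL)\cap K(H)=\{0\}$, so $\phi$ is ample. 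Thus $\sigma|_{\AL}$ is an ample trivial extension of $\AL$.

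Next I would fix a faithful unital representation $\rho_0$ of $\OL$ and pass to its infinite amplification $\rho=\rho_0^{(\infty)}$ on $H$, so that $\rho(\OL)\cap K(H)=\{0\}$; then $\pi\circ\rho$ is a trivial extension of $\OL$ and $\pi\circ\rho|_{\AL}$ is an ample trivial extension of $\AL$. Now $\sigma|_{\AL}$ and $\pi\circ\rho|_{\AL}$ are two ample trivial extensions of the separable nuclear (commutative AF) algebra $\AL$, so Voiculescu's theorem applies: ample trivial extensions of such an algebra are unique up to strong equivalence, whence there is a unitary $U\in B(H)$ with $\Ad(\pi(U))\circ\sigma|_{\AL}=\pi\circ\rho|_{\AL}$. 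Setting $\tau:=\Ad(U^*)\circ\rho$ gives a unital representation of $\OL$ on $H$ — a trivial extension, since $\rho(\OL)\cap K(H)=\{0\}$ — and $\pi\circ\tau|_{\AL}=\Ad(\pi(U)^*)\circ\pi\circ\rho|_{\AL}=\sigma|_{\AL}$, which is exactly the assertion.

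The main obstacle is the lifting in the first movement: the projections must be lifted \emph{compatibly with the maps $I_{l,l+1}$ across infinitely many levels}, and a naive level-by-level lift of the finite orthogonal families need not respect these relations. The device that removes the obstacle is to lift inside corners and to reserve, in each fibre of $\iota$, the last projection as the exact orthogonal complement, so that the defining relation holds identically; the infinite rank of each $\sigma(E_i^l)$, forced by injectivity of $\sigma$, is precisely what keeps every corner a Calkin quotient and lets the refinement continue indefinitely. Once ampleness is secured, the remaining steps are a direct appeal to the uniqueness (absorption) part of Voiculescu's theorem and a single conjugation.
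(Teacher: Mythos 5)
Your proposal is correct, and its skeleton coincides with the paper's: first establish that $\sigma|_{\AL}$ is a trivial extension of the commutative AF-algebra $\AL$, then compare it with the restriction to $\AL$ of a trivial extension of $\OL$ via the uniqueness (absorption) part of Voiculescu's theorem, and finish with a single unitary conjugation. The genuine difference is in how the first step is obtained: the paper disposes of it in one line by citing \cite[1.15 Theorem]{BDF} for triviality of extensions of commutative AF-algebras, whereas you reprove that fact by hand, lifting the projections $\sigma(E_i^l)$ level by level inside the corners $P_i^l B(H) P_i^l \cong B(P_i^l H)$ and reserving, in each $\iota$-fibre, the last projection as the exact orthogonal complement so that $\sum_i P_i^l = 1$ and $P_i^l = \sum_j I_{l,l+1}(i,j) P_j^{l+1}$ hold exactly and the lift $\phi$ of $\sigma|_{\AL}$ exists across all levels; your observation that injectivity of $\sigma$ forces each $P_i^l$ to have infinite rank, keeping every corner a Calkin quotient, is exactly what makes the induction run. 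This buys a self-contained argument at the cost of length. You are also more careful than the paper on a point it leaves implicit: Voiculescu's uniqueness requires both trivial extensions to be ample, and you secure this by passing to the infinite amplification $\rho = \rho_0^{(\infty)}$, so that $\rho(\OL)\cap K(H) = \{0\}$ (which in particular guarantees that $\pi\circ\rho$ is injective, i.e., genuinely an extension), whereas the paper starts from an arbitrary unital $*$-monomorphism $\rho:\OL\to B(H)$ and tacitly assumes this. One small streamlining: your verification that $\phi$ is ample via projections can be replaced by the immediate remark that $\phi(a)\in K(H)$ gives $\sigma(a)=\pi(\phi(a))=0$, hence $a=0$ by injectivity of $\sigma$; as stated, your projection argument covers only the projections of $\AL$, not general elements, though the conclusion is the same.
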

\begin{proof}
Let us denote by $\hat{\sigma}$
the restriction $\sigma|_{\AL}$ of $\sigma$ to the subalgebra $\AL$. 
As $\AL$ is a commutative AF-algebra,
the extension $\hat{\sigma}$ is trivial by \cite[1.15 Theorem]{BDF}.
Take a unital $*$-monomorphism  
$\rho:\OL\longrightarrow B(H)$
and put
$\tilde{\rho} = \pi \circ \rho:\OL\longrightarrow Q(H)$
and
$$
\hat{\rho} =\pi\circ\rho|_{\AL}:\AL\longrightarrow Q(H).
$$
As the extensions $\hat{\sigma}, \hat{\rho}$ are both trivial,
by Voiculescu's theorem \cite{Voiculescu}, 
there exists a unitary $U \in B(H)$
such that 
$$
\hat{\sigma}(x) = \pi(U) \hat{\rho}(x) \pi(U)^*, \qquad x \in \AL
$$
so that
$
{\sigma}(x) = \pi(U) \pi(\rho(x)) \pi(U)^*, x \in \AL.
$
 By putting $\tau = \Ad(U)\circ \rho:\OL\longrightarrow B(H)$,
 we have
 $\sigma =\pi\circ \tau$ on $\AL.$
\end{proof}
A Fredholm module over a $C^*$-algebra $\A$ 
means a pair $(u,\rho)$ of a unitary $u \in Q(H)$
and a $*$-homomorphism $\rho:\A\longrightarrow B(H)$ such that
$\pi(\rho(a)) u =u \pi(\rho(a))$ for all $a \in \A$.
It is well-known that 
the K-homology group $\K^0(\A)$ for $\A$ 
is realized as 
the homotopy equivalence classes of Fredholm modules over $\A$
(cf. \cite{BDF},  \cite{Douglas}, \cite{Higson}, \cite{HR}, etc.).
The addition
$[(u_1, \rho_1)] + [(u_2, \rho_2)] $
is defined by $[(u_1\oplus u_2,\rho_1\oplus  \rho_2)], $
in particular, we have
$
[(u_1, \rho)] + [(u_2, \rho)] = [(u_1 u_2, \rho)], 
$
 and hence
$
[(u^*, \rho)] + [(u, \rho)] = [(u^* u, \rho)]= [(1, \rho)], 
$
 so that 
 $-[(u,\rho)] = [(u^*,\rho)].$
 Take a faithful representation
 $\tau_0: \A\longrightarrow B(H)$ and a unitary
 $V_0 \in B(H)$ such that 
 $\tau_0(a)V_0 = V_0 \tau_0(a)$ for $a \in \A$.
 Since $(\pi(V_0),\tau_0)$ is a neutral element of $\K^0(\A)$,
 $(u\oplus \pi(V_0), \rho\oplus\tau_0)$ is equivalent to $(u,\rho)$
 for any Fredholm module $(u,\rho)$ over $\A$.
 Hence we can take a representative
of $[(u,\rho)]$ as $\rho$ being faithful.
As $\AL ={\displaystyle \lim_{l\to\infty}}\A_l$
is an AF-algebra, the formula
$$
\K^0(\AL) = \Hom(\K_0(\AL),\Z)= \varprojlim \K^0(\A_l) 
$$
holds (cf. \cite{Blackadar}, \cite{HR}).
For $[(u,\rho)] \in \K^0(\A_l),$
the identity
$E_i^l = \sum_{j=1}^{m(l+1)} I_{l,l+1}(i,j) E_j^{l+1}$
implies
\begin{equation*}
\ind_{\pi\circ\rho(E_i^l)}u 
=\sum_{j=1}^{m(l+1)} I_{l,l+1}(i,j) \ind_{\pi\circ\rho(E_j^{l+1})}u,
\end{equation*}
so that the 
 correspondence for each $l \in \Zp$
\begin{equation}\label{eq:indl}
\Ind_l: [(u,\rho)] \in \K^0(\A_l) \longrightarrow
(\ind_{\pi\circ\rho(E_i^l)}u)_{i=1}^{m(l)} \in \Z^{m(l)}
\end{equation}
yields the isomorphism
\begin{equation}\label{eq:KinjlimZ}
\K^0(\AL) 
= \varprojlim\{ I_{l,l+1}:\Z^{m(l+1)} \longrightarrow \Z^{m(l)}\}
\end{equation}
of abelian groups.
The latter group, denoted by $\Z_I$,
is the projective limit of the system
$I_{l,l+1}:\Z^{m(l+1)} \longrightarrow \Z^{m(l)}, l \in \Zp$. 
%
We denote by 
$
\Ind: \K^0(\AL) \longrightarrow \Z_I
$ the isomorphism
induced by \eqref{eq:indl}.
We note the following lemma.
\begin{lemma} \label{lem:5.2p}
Let $\rho_i: \AL\longrightarrow B(H), i=1,2$
be  representations and $u \in Q(H)$ a unitary
satisfying
$u \pi(\rho_i(a)) = \pi(\rho_i(a)) u, a \in \AL, i=1,2$.
Suppose that $\pi \circ \rho_2 = \pi\circ \rho_1$.
Then 
$[(u,\rho_1)] = [(u,\rho_2)]$ as elements of $\K^0(\AL)$. 
\end{lemma}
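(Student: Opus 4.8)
The plan is to read off both classes through the explicit index isomorphism $\Ind\colon \K^0(\AL)\longrightarrow \Z_I$ constructed above. Recall that for a Fredholm module $(u,\rho)$ over $\AL$ this isomorphism is given level by level by
\[
\Ind([(u,\rho)]) = \bigl((\ind_{\pi\circ\rho(E_i^l)}u)_{i=1}^{m(l)}\bigr)_{l\in\Zp}\in\Z_I .
\]
Thus it suffices to check that $(u,\rho_1)$ and $(u,\rho_2)$ have the same image under $\Ind$, after which the conclusion follows from injectivity of $\Ind$.

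The one point to verify is that each integer $\ind_{\pi\circ\rho(E_i^l)}u$ depends on $\rho$ only through the projection $\pi\circ\rho(E_i^l)\in Q(H)$, together with the fixed unitary $u$, and not on the particular lift $\rho$. This is immediate from the definition of $\ind_e t$ recalled above: that integer is attached to a projection $e\in Q(H)$ and an element $t\in Q(H)$ with $ete$ invertible in $eQ(H)e$, and is computed from arbitrary lifts in $B(H)$ independently of all choices. Here $e=\pi\circ\rho(E_i^l)$ and $t=u$, and $eue$ is invertible in $eQ(H)e$ since $u$ is unitary and commutes with $e$ by the Fredholm module relation. Now the hypothesis $\pi\circ\rho_2=\pi\circ\rho_1$ forces $\pi\circ\rho_1(E_i^l)=\pi\circ\rho_2(E_i^l)$ for every $i$ and every $l\in\Zp$, while $u$ is common to the two modules; hence $\ind_{\pi\circ\rho_1(E_i^l)}u=\ind_{\pi\circ\rho_2(E_i^l)}u$ for all $i,l$, so that $\Ind([(u,\rho_1)])=\Ind([(u,\rho_2)])$. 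As $\Ind$ is an isomorphism, I would then conclude $[(u,\rho_1)]=[(u,\rho_2)]$ in $\K^0(\AL)$.

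I do not expect a genuine obstacle here: the substance of the argument is simply that the complete invariant $\Ind$ factors through $\pi\circ\rho$, so any two Fredholm modules with the same image projections and the same unitary pair to the same K-homology class. The only step needing any care is confirming that the Fredholm indices defining $\Ind$ see nothing of $\rho$ beyond its image in the Calkin algebra. A more hands-on alternative would be to produce an explicit homotopy of Fredholm modules out of the compact difference $\rho_1(a)-\rho_2(a)\in K(H)$ guaranteed by $\pi\circ\rho_1=\pi\circ\rho_2$, but this route appears strictly more laborious and is unnecessary given the index description.
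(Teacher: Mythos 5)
Your proposal is correct and follows essentially the same route as the paper: the paper's own proof likewise observes that $\pi\circ\rho_1(E_i^l)=\pi\circ\rho_2(E_i^l)$ forces $\ind_{\pi\circ\rho_1(E_i^l)}u=\ind_{\pi\circ\rho_2(E_i^l)}u$ for all $i,l$ and then concludes via the isomorphism $\Ind\colon\K^0(\AL)\longrightarrow\Z_I$. Your extra remark that $\ind_e t$ depends only on the Calkin-algebra data $e$ and $t$, not on the lifts, is exactly the point the paper records when defining $\ind_e t$, so nothing is missing.
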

\begin{proof}
Since $\pi \circ \rho_2(E_i^l)  = \pi\circ \rho_1(E_i^l), i=1,\dots,m(l),$
we have
$\ind_{\pi\circ\rho_2(E_i^l)}u = \ind_{\pi\circ\rho_1(E_i^l)}u$ for $i=1,\dots, m(l)$, so that 
$[(u,\rho_1)] = [(u,\rho_2)]$.
\end{proof}
For an extension $\sigma:\OL\longrightarrow Q(H)$, 
take a trivial extension
$\tau:\OL\longrightarrow B(H)$ such that 
$\sigma|_{\AL} = \pi\circ \tau|_{\AL}$.
\def\ttau{\tilde{\tau}}
We write 
$\ttau = \pi\circ \tau: \OL\longrightarrow Q(H)$
and define
$U_{\sigma,\ttau} \in Q(H)$ by setting
\begin{equation*}
U_{\sigma,\ttau} := \sum_{\alpha \in \Sigma} \sigma(S_\alpha) \ttau(S_\alpha^*).
\end{equation*}
\begin{lemma}\label{lem:5.3}
$U_{\sigma,\ttau}$ is a unitary in $Q(H)$ 
such that 
$U_{\sigma,\ttau}\ttau(a) = \ttau(a) U_{\sigma,\ttau}$ for all $a \in \AL$.
Hence the pair
$(U_{\sigma,\ttau}, \tau|_{\AL})$ gives rise to an element of $\K^0(\AL)$.
\end{lemma}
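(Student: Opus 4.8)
The plan is to reduce the whole statement to two elementary observations and one structural fact about $\OL$. The first observation is that since $\sum_{\beta\in\Sigma}S_\beta S_\beta^* = 1$ is a sum of projections, the range projections $S_\beta S_\beta^*$ are mutually orthogonal, so that $S_\alpha^* S_\beta = \delta_{\alpha\beta}\,S_\alpha^* S_\alpha$. The second is that each source projection $S_\alpha^* S_\alpha = \sum_{i,j} A_{l,l+1}(i,\alpha,j)E_j^{l+1}$ lies in $\AL$, obtained by summing the relation $({\frak L})$ over $i$; consequently $\sigma$ and $\ttau$ agree on $S_\alpha^* S_\alpha$, because $\ttau|_{\AL} = \pi\circ\tau|_{\AL} = \sigma|_{\AL}$ by the choice of $\tau$.

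First I would verify unitarity. Writing $U = U_{\sigma,\ttau}$ and expanding, $U U^* = \sum_{\alpha,\beta}\sigma(S_\alpha)\,\ttau(S_\alpha^* S_\beta)\,\sigma(S_\beta^*)$; orthogonality collapses this to $\sum_\alpha \sigma(S_\alpha)\,\ttau(S_\alpha^* S_\alpha)\,\sigma(S_\alpha^*)$, and since $S_\alpha^* S_\alpha\in\AL$ I may replace $\ttau$ by $\sigma$ and recombine under the homomorphism $\sigma$ to reach $\sum_\alpha \sigma(S_\alpha S_\alpha^*) = \sigma(1) = 1$. The computation of $U^* U$ is symmetric, now using $\sigma(S_\alpha^* S_\beta) = \delta_{\alpha\beta}\,\ttau(S_\alpha^* S_\alpha)$ and recombining under $\ttau$ to reach $\ttau(1)=1$. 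The same bookkeeping yields the convenient intertwining identities $U\,\ttau(S_\beta) = \sigma(S_\beta)$ and $\sigma(S_\beta^*)\,U = \ttau(S_\beta^*)$.

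For the commutation $U\,\ttau(a) = \ttau(a)\,U$ it suffices, by linearity and continuity, to treat the generators $a = E_i^l$ of $\AL$. I would compute the conjugation $U\,\ttau(E_i^l)\,U^* = \sum_{\alpha,\beta}\sigma(S_\alpha)\,\ttau(S_\alpha^* E_i^l S_\beta)\,\sigma(S_\beta^*)$ and separate it into diagonal and off-diagonal parts. On a diagonal term, the relation $({\frak L})$ gives $S_\alpha^* E_i^l S_\alpha = \sum_j A_{l,l+1}(i,\alpha,j)E_j^{l+1}\in\AL$, so once more $\ttau$ may be replaced by $\sigma$ and the term recombines to $\sigma(S_\alpha S_\alpha^* E_i^l S_\alpha S_\alpha^*)$. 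Thus the whole expression collapses to $\ttau(E_i^l)$ exactly once one knows two things: that the off-diagonal factors $S_\alpha^* E_i^l S_\beta$ vanish for $\alpha\neq\beta$, and that $\sum_\alpha S_\alpha S_\alpha^* E_i^l S_\alpha S_\alpha^* = E_i^l$.

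The main obstacle is precisely these two points, both of which amount to the commutation $[\,E_i^l,\, S_\beta S_\beta^*\,] = 0$; that is, each range projection $S_\beta S_\beta^*$ commutes with the projections generating $\AL$. This is a structural feature of $\OL$, coming from the fact that both the family $\{E_i^l\}$ and the range projections $S_\beta S_\beta^*$ lie inside the canonical commutative (diagonal) subalgebra of $\OL$ in the groupoid model of \cite{MaDocMath2002}. Granting it, I would write $E_i^l S_\beta = S_\beta S_\beta^* E_i^l S_\beta$, so that $S_\alpha^* E_i^l S_\beta = (S_\alpha^* S_\beta)\,S_\beta^* E_i^l S_\beta = 0$ for $\alpha\neq\beta$ by orthogonality, while $\sum_\alpha S_\alpha S_\alpha^* E_i^l S_\alpha S_\alpha^* = \sum_\alpha S_\alpha S_\alpha^* E_i^l = E_i^l$. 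The conjugation then gives $U\,\ttau(E_i^l)\,U^* = \ttau(E_i^l)$, hence $U\,\ttau(a) = \ttau(a)\,U$ for all $a\in\AL$. Finally, since $U_{\sigma,\ttau}$ is a unitary in $Q(H)$, since $\tau|_{\AL}\colon\AL\to B(H)$ is a $*$-homomorphism, and since $\pi(\tau(a)) = \ttau(a)$ now commutes with $U_{\sigma,\ttau}$ for every $a\in\AL$, the pair $(U_{\sigma,\ttau},\,\tau|_{\AL})$ is a Fredholm module over $\AL$ and therefore determines a class in $\K^0(\AL)$.
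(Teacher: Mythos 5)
Your argument is correct and is essentially the paper's own proof: unitarity follows from orthogonality of the range projections together with $S_\alpha^* S_\alpha \in \mathcal{A}_{\frak L}$ (so $\sigma$ and $\tilde{\tau}$ may be interchanged there), and the commutation rests on $S_\alpha^* E_i^l S_\alpha \in \mathcal{A}_{\frak L}$ plus the fact that each $E_i^l$ commutes with $S_\alpha S_\alpha^*$ --- a fact the paper uses silently in the steps $\tilde{\tau}(S_\alpha^* a) = \tilde{\tau}(S_\alpha^* a S_\alpha)\tilde{\tau}(S_\alpha^*)$ and $\sigma(S_\alpha S_\alpha^* a S_\alpha) = \sigma(a S_\alpha)$, and which you rightly make explicit via the diagonal subalgebra of the groupoid model. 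Your only deviation is cosmetic: you conjugate the generators, showing $U\tilde{\tau}(E_i^l)U^* = \tilde{\tau}(E_i^l)$ by a diagonal/off-diagonal splitting of the double sum and then extend by linearity and continuity, where the paper rewrites $U\tilde{\tau}(a)$ directly into $\tilde{\tau}(a)U$ in a single chain for all $a \in \mathcal{A}_{\frak L}$.
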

\begin{proof}
We have
\begin{equation*}
U_{\sigma,\ttau} U_{\sigma,\ttau}^*
= \sum_{\alpha, \beta \in \Sigma} \sigma(S_\alpha) \ttau(S_\alpha^*)\ttau(S_\beta)\sigma(S_\beta^*)
= \sum_{\alpha\in \Sigma} \sigma(S_\alpha) \sigma(S_\alpha^*) =1
\end{equation*}
and similarly 
$U_{\sigma,\ttau}^* U_{\sigma,\ttau} =1$
so that 
$U_{\sigma,\ttau}$ is a unitary in $Q(H)$.
We also have for $a \in \AL$
\begin{align*}
U_{\sigma,\ttau} \ttau(a)
= &  \sum_{\alpha \in \Sigma} \sigma(S_\alpha) \ttau(S_\alpha^*)\ttau(a) 
=   \sum_{\alpha \in \Sigma} \sigma(S_\alpha) \ttau(S_\alpha^* a S_\alpha)\ttau(S_\alpha^*) \\
= &  \sum_{\alpha \in \Sigma} \sigma(S_\alpha S_\alpha^* a S_\alpha)\ttau(S_\alpha^* )
=   \sum_{\alpha \in \Sigma} \sigma( a S_\alpha)\ttau(S_\alpha^* )
=  \ttau(a) U_{\sigma,\ttau}.
\end{align*}
\end{proof}
\begin{lemma}\label{lem:5.4}
For an extension $\sigma:\OL\longrightarrow Q(H)$, 
take trivial extensions
$\tau_i:\OL\longrightarrow B(H)$ such that 
$\sigma|_{\AL} = \tilde{\tau}_i|_{\AL}, i=1,2$.
Then there exists a unitary
$V \in B(H)$ such that 
\begin{equation}\label{eq:lem2.4}
\pi(V) \sigma(a) = \sigma(a) \pi(V), \quad a \in \AL
\quad \text{ and } \quad
U_{\sigma,\ttau_2} = U_{\sigma,\ttau_1} \phi_{\ttau_1}(\pi(V)) \pi(V)^*
\end{equation}
where
$\phi_{\ttau_1}(\pi(V)) $ is a unitary in $\Q(H)$ defined by
\begin{equation*}
\phi_{\ttau_1}(\pi(V)) = \sum_{\alpha \in \Sigma} \ttau_1(S_\alpha) \pi(V) \ttau_1(S_\alpha)^*.
\end{equation*}
\end{lemma}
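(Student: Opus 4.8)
The plan is to obtain the unitary $V$ from the essential uniqueness of trivial extensions and then to read off both identities in \eqref{eq:lem2.4}: the commutation relation by restriction to $\AL$, and the cocycle identity by a direct computation of $U_{\sigma,\ttau_1}^{*}U_{\sigma,\ttau_2}$. The point I would exploit is that if $V$ intertwines $\ttau_1$ and $\ttau_2$ on all of $\OL$ modulo the compacts, then the commutation with $\sigma(\AL)$ comes for free, since $\ttau_1$ and $\ttau_2$ both agree with $\sigma$ on $\AL$ by hypothesis.

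First I would construct $V$. The representations $\tau_1,\tau_2:\OL\longrightarrow B(H)$ are faithful and essential (as are those produced in the proof of Lemma \ref{lem:lem5.1}), so $\ttau_1$ and $\ttau_2$ are trivial extensions of the separable unital nuclear $C^*$-algebra $\OL$ given by absorbing representations. By Voiculescu's theorem \cite{Voiculescu} any two such extensions lie in a single strong equivalence class, so there is a unitary $V\in B(H)$ with
\begin{equation*}
\ttau_2(x)=\pi(V)\,\ttau_1(x)\,\pi(V)^{*},\qquad x\in\OL .
\end{equation*}
Restricting to $a\in\AL$ and using $\ttau_1(a)=\ttau_2(a)=\sigma(a)$ gives $\sigma(a)=\pi(V)\sigma(a)\pi(V)^{*}$, that is $\pi(V)\sigma(a)=\sigma(a)\pi(V)$, which is the first identity of \eqref{eq:lem2.4}.

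Next I would establish the cocycle identity. Since $\sum_{\beta\in\Sigma}S_\beta S_\beta^{*}=1$ forces $S_\alpha^{*}S_\beta=\delta_{\alpha\beta}S_\alpha^{*}S_\alpha$ with $S_\alpha^{*}S_\alpha\in\AL$, and since $\sigma$ and $\ttau_2$ agree on $\AL$, I would compute
\begin{align*}
U_{\sigma,\ttau_1}^{*}U_{\sigma,\ttau_2}
&=\sum_{\alpha,\beta\in\Sigma}\ttau_1(S_\alpha)\,\sigma(S_\alpha^{*}S_\beta)\,\ttau_2(S_\beta^{*})
=\sum_{\alpha\in\Sigma}\ttau_1(S_\alpha)\,\sigma(S_\alpha^{*}S_\alpha)\,\ttau_2(S_\alpha^{*})\\
&=\sum_{\alpha\in\Sigma}\ttau_1(S_\alpha)\,\ttau_2(S_\alpha^{*}).
\end{align*}
Substituting $\ttau_2(S_\alpha^{*})=\pi(V)\,\ttau_1(S_\alpha^{*})\,\pi(V)^{*}$ and pulling the common right factor $\pi(V)^{*}$ out of the sum rewrites the last expression as $\phi_{\ttau_1}(\pi(V))\,\pi(V)^{*}$. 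Multiplying on the left by the unitary $U_{\sigma,\ttau_1}$ then yields $U_{\sigma,\ttau_2}=U_{\sigma,\ttau_1}\,\phi_{\ttau_1}(\pi(V))\,\pi(V)^{*}$, the second identity of \eqref{eq:lem2.4}; the asserted unitarity of $\phi_{\ttau_1}(\pi(V))$ is then automatic, being the product of the unitaries $U_{\sigma,\ttau_1}^{*}U_{\sigma,\ttau_2}$ and $\pi(V)$.

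The delicate step is the construction of $V$. Voiculescu's theorem applied verbatim yields only approximate unitary equivalence of $\ttau_1$ and $\ttau_2$ with compact differences, whereas here I need a single unitary implementing an exact conjugation modulo the compacts on the whole of $\OL$. This is precisely what the absorption property of a faithful essential representation provides, collapsing all trivial extensions of $\OL$ into one strong equivalence class; granting this, the remaining steps are routine manipulations with the relations $({\frak L})$ and the hypothesis $\sigma|_{\AL}=\ttau_i|_{\AL}$.
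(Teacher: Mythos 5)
Your proof is correct and takes essentially the same route as the paper: both obtain $V$ from Voiculescu's theorem (all trivial extensions of $\OL$ lie in one strong equivalence class), deduce the commutation with $\sigma(\AL)$ by restricting the intertwining relation, and then verify the cocycle identity by the same algebra resting on $S_\alpha^* S_\alpha \in \AL$. The only cosmetic difference is that you compute $U_{\sigma,\tilde{\tau}_1}^* U_{\sigma,\tilde{\tau}_2}$ via the orthogonality $S_\alpha^* S_\beta = \delta_{\alpha\beta} S_\alpha^* S_\alpha$ rather than expanding $U_{\sigma,\tilde{\tau}_2}$ directly as the paper does, which has the small bonus of yielding the unitarity of $\phi_{\tilde{\tau}_1}(\pi(V))$ for free as a product of unitaries, where the paper checks it separately.
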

\begin{proof}
By Voiculescu's theorem \cite{Voiculescu}, one may find a unitary
$V \in B(H)$ such that
$
\ttau_2(X) =\pi(V) \ttau_1(X) \pi(V)^*
$
for $X \in \OL$.
For $a \in \AL$, we have
$$
\pi(V) \sigma(a) 
= \pi(V) \ttau_1(a) 
= \ttau_2(a) \pi(V) = \sigma(a) \pi(V),
$$
so that $\pi(V)$ commutes with $\sigma(a)$ for all $a \in \AL.$
It then follows that 
\begin{align*}
U_{\sigma,\ttau_2} 
= & \sum_{\alpha \in \Sigma} \sigma(S_\alpha) \pi(V) \ttau_1(S_\alpha^*)\pi(V)^* \\
= & \sum_{\alpha \in \Sigma} \sigma(S_\alpha) \ttau_1(S_\alpha^* S_\alpha) \pi(V) \ttau_1(S_\alpha^*)\pi(V)^* \\
= & \sum_{\alpha \in \Sigma} \sigma(S_\alpha) \ttau_1(S_\alpha^*) \sum_{\beta\in \Sigma} \ttau_1(S_\beta) \pi(V) \ttau_1(S_\beta^*)\pi(V)^* \\
= & U_{\sigma,\ttau_1} \phi_{\ttau_1}(\pi(V))\pi(V)^*. 
\end{align*}
It is routine to check that $\phi_{\ttau_1}(\pi(V))$ is a unitary in $Q(H)$
by using the commutativity between 
$\pi(V)$ and $\ttau_1(S_\alpha^* S_\alpha)$.
\end{proof}
\begin{lemma}\label{lem:5.5}
For a Fredholm module  $(u,\rho)$  over $\AL$,
take a trivial extension $\tau_\rho:\OL\longrightarrow B(H)$ such that 
$\rho = \tau_\rho|_{\AL}$.
Then
$\phi_{\ttau_\rho}(u) = \sum_{\alpha \in \Sigma} \ttau_\rho(S_\alpha) u \ttau_\rho(S_\alpha^*)$
is a unitary in $Q(H)$ commuting with $\pi(\rho(\AL))$.
Hence the pair
$(\phi_{\ttau_\rho}(u) , \rho)$ 
gives rise to an element of $\K^0(\AL)$ and its class
$[(\phi_{\ttau_\rho}(u) , \rho)]$ in $\K^0(\AL)$
is independent of the choice of the extension 
$\tau_\rho: \OL\longrightarrow B(H)$
as long as 
$\tau_\rho|_{\AL} = \rho|_{\AL}.$ 
\end{lemma}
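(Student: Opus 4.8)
The plan is to dispatch the first two assertions by direct computation modeled on the proof of Lemma \ref{lem:5.3}, and to treat the independence of the $\K$-homology class as the real content. For unitarity, I would expand $\phi_{\ttau_\rho}(u)\phi_{\ttau_\rho}(u)^*=\sum_{\alpha,\beta}\ttau_\rho(S_\alpha)\,u\,\ttau_\rho(S_\alpha^*S_\beta)\,u^*\,\ttau_\rho(S_\beta^*)$ and use that the range projections $S_\beta S_\beta^*$ are mutually orthogonal, so $S_\alpha^*S_\beta=0$ for $\alpha\neq\beta$ and only the diagonal terms survive; since $S_\alpha^*S_\alpha\in\AL$ (sum over $i$ of the relation $S_\alpha^*E_i^lS_\alpha=\sum_jA_{l,l+1}(i,\alpha,j)E_j^{l+1}$ in Proposition \ref{prop:relationL}) and $u$ commutes with $\ttau_\rho(\AL)=\pi(\rho(\AL))$, each diagonal term collapses to $\ttau_\rho(S_\alpha S_\alpha^*)$ and the sum is $\ttau_\rho(1)=1$; the reverse identity is identical. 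For the commutation with $\pi(\rho(\AL))$, I would repeat the computation of Lemma \ref{lem:5.3} with $\sigma(S_\alpha)$ replaced by $\ttau_\rho(S_\alpha)u$: for $a\in\AL$ one has $S_\alpha^*a=S_\alpha^*aS_\alpha S_\alpha^*$ and $S_\alpha S_\alpha^*aS_\alpha=aS_\alpha$ (because $a$ commutes with $S_\alpha S_\alpha^*$), while $S_\alpha^*aS_\alpha\in\AL$, so commuting $u$ past $\ttau_\rho(S_\alpha^*aS_\alpha)$ turns $\phi_{\ttau_\rho}(u)\ttau_\rho(a)$ into $\ttau_\rho(a)\phi_{\ttau_\rho}(u)$. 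Hence $(\phi_{\ttau_\rho}(u),\rho)$ is a Fredholm module over $\AL$ and defines a class in $\K^0(\AL)$.

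The heart of the proof is showing this class does not depend on the extension $\tau_\rho$, and here I would compute its defining Fredholm indices explicitly. Under the isomorphism $\Ind:\K^0(\AL)\to\Z_I$ induced by \eqref{eq:indl}, the class $[(\phi_{\ttau_\rho}(u),\rho)]$ is completely determined by the integers $\ind_{\pi(\rho(E_i^l))}\phi_{\ttau_\rho}(u)$, so it suffices to show each of these is a function of $u$ and $\rho$ alone. Choosing the lifts $E=\rho(E_i^l)=\tau_\rho(E_i^l)$ and $\Phi=\sum_\alpha\tau_\rho(S_\alpha)U\tau_\rho(S_\alpha^*)$ with $U\in B(H)$ any lift of $u$, I get $E\Phi E=\sum_\alpha\tau_\rho(E_i^lS_\alpha)\,U\,\tau_\rho(S_\alpha^*E_i^l)$. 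Writing $F_{i,\alpha}^{l+1}:=S_\alpha^*E_i^lS_\alpha=\sum_jA_{l,l+1}(i,\alpha,j)E_j^{l+1}\le S_\alpha^*S_\alpha$, so that $E_i^lS_\alpha=\tau_\rho(S_\alpha)\tau_\rho(F_{i,\alpha}^{l+1})$, and using $S_\alpha^*E_i^lS_\beta=0$ for $\alpha\neq\beta$, exhibits $E\Phi E$ on $EH=\bigoplus_\alpha\tau_\rho(E_i^lS_\alpha)H$ as block diagonal, the $\alpha$-block being unitarily equivalent (via the partial isometry $\tau_\rho(S_\alpha)$, which restricts to an isometry on $\tau_\rho(F_{i,\alpha}^{l+1})H$ since $F_{i,\alpha}^{l+1}\le S_\alpha^*S_\alpha$) to $\tau_\rho(F_{i,\alpha}^{l+1})\,U\,\tau_\rho(F_{i,\alpha}^{l+1})$ on $\tau_\rho(F_{i,\alpha}^{l+1})H$. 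Additivity of the Fredholm index over this finite orthogonal sum then yields $\ind_{\pi(\rho(E_i^l))}\phi_{\ttau_\rho}(u)=\sum_{\alpha\in\Sigma}\sum_{j=1}^{m(l+1)}A_{l,l+1}(i,\alpha,j)\,\ind_{\pi(\rho(E_j^{l+1}))}u$, whose right-hand side involves only $(u,\rho)$; injectivity of $\Ind$ gives the asserted independence.

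I expect the only genuine obstacle to lie in this last computation: justifying that compression by the partial isometry $\tau_\rho(S_\alpha)$ preserves the Fredholm index (which reduces to $F_{i,\alpha}^{l+1}\le S_\alpha^*S_\alpha$) and checking that the off-diagonal blocks vanish, both of which follow from $S_\alpha^*E_i^lS_\beta=\delta_{\alpha\beta}F_{i,\alpha}^{l+1}S_\alpha^*$ and the commutation of $E_i^l$ with the range projections $S_\alpha S_\alpha^*$ used earlier. Independence could instead be obtained by comparing two such extensions through Voiculescu's theorem \cite{Voiculescu} and Lemma \ref{lem:5.2p} by a unitary conjugation commuting with $\pi(\rho(\AL))$, but the explicit index formula is both cleaner and exactly the identity that will later identify $\Ind\circ\phi$ with the action of $A_\L$.
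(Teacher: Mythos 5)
Your proposal is correct and takes essentially the same route as the paper's proof: via the isomorphism $\Ind$ of \eqref{eq:KinjlimZ} it suffices to match the indices $\ind_{\pi\circ\rho(E_i^l)}\phi_{\ttau_\rho}(u)$, and your decomposition into the blocks indexed by $\alpha\in\Sigma$ followed by conjugation with the partial isometry $\ttau_\rho(S_\alpha)$ into the corner of $\pi\circ\rho(S_\alpha^* E_i^l S_\alpha)$ --- a projection in $\rho(\AL)$, on which any two such extensions agree --- is exactly the paper's argument. The only difference is cosmetic: you push the computation one step further to the explicit formula $\ind_{\pi\circ\rho(E_i^l)}\phi_{\ttau_\rho}(u)=\sum_{\alpha\in\Sigma}\sum_{j=1}^{m(l+1)}A_{l,l+1}(i,\alpha,j)\,\ind_{\pi\circ\rho(E_j^{l+1})}u$, which the paper records separately later (in the proof of Lemma \ref{lem:5.7}), whereas the paper stops as soon as the index is expressed through $S_\alpha^* E_i^l S_\alpha\in\AL$.
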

\begin{proof}
We will show that 
$\phi_{\ttau_\rho}(u)$ 
commutes with 
$\pi(\rho(a))$ for $a \in \AL$.
We have
\begin{align*}
\phi_{\ttau_\rho}(u) \pi(\rho(a))
=&  \sum_{\alpha \in \Sigma} \ttau_\rho(S_\alpha) u \ttau_\rho(S_\alpha^* a)
=  \sum_{\alpha \in \Sigma} \ttau_\rho(S_\alpha) u \ttau_\rho(S_\alpha^* a S_\alpha)  \ttau_\rho(S_\alpha^*) \\
=&  \sum_{\alpha \in \Sigma} \ttau_\rho(S_\alpha) u \pi(\rho(S_\alpha^* a S_\alpha))  \ttau_\rho(S_\alpha^*) 
=  \sum_{\alpha \in \Sigma} \ttau_\rho(S_\alpha) \pi(\rho(S_\alpha^* a S_\alpha))u  \ttau_\rho(S_\alpha^*) \\
=&  \sum_{\alpha \in \Sigma} \ttau_\rho(S_\alpha S_\alpha^* a S_\alpha)) u  \ttau_\rho(S_\alpha^*) 
=  \sum_{\alpha \in \Sigma} \ttau_\rho(a) \ttau_\rho(S_\alpha) u  \ttau_\rho(S_\alpha^*) \\
= & \pi(\rho(a)) \phi_{\ttau_\rho}(u).
\end{align*}
Take two trivial extensions
$\tau_\rho, \tau'_{\rho}:\OL\longrightarrow B(H)$ such that 
$\rho = \tau_\rho|_{\AL} =\tau'_\rho|_{\AL}.$
By \eqref{eq:indl}, 
it suffices to show that 
\begin{equation}\label{eq:lem5.51}
\ind_{\pi\circ\rho(E_i^l)}\phi_{\tau_\rho}(u) 
= \ind_{\pi\circ\rho(E_i^l)}\phi_{\tau'_\rho}(u).
\end{equation}
It is straightforward to see that  
$\ttau_\rho(S_\alpha) u \ttau_\rho(S_\alpha^*)$ 
commutes  with $\pi(\rho(E_i^l))$,
 and the equality 
$$
\ind_{\pi\circ\rho(E_i^l)}\phi_{\tau_\rho}(u) 
= \sum_{\alpha \in \Sigma} 
\ind_{\pi\circ\rho(E_i^l)} \ttau_\rho(S_\alpha) u \ttau_\rho(S_\alpha^*)
$$
holds. 
One then has 
\begin{align*}
\ind_{\pi\circ\rho(E_i^l)} \ttau_\rho(S_\alpha) u \ttau_\rho(S_\alpha^*)
= & \ind_{\pi\circ\rho(E_i^l S_\alpha S_\alpha^* )} \ttau_\rho(E_i^l S_\alpha) u \ttau_\rho(S_\alpha^* E_i^l) \\
= & \ind_{\pi\circ\rho( S_\alpha^* E_i^l S_\alpha )} \ttau_\rho( S_\alpha^* E_i^l S_\alpha) u \ttau_\rho(S_\alpha^* E_i^l  S_\alpha) \\
= & \ind_{\pi\circ\rho( S_\alpha^* E_i^l S_\alpha )} \ttau'_\rho( S_\alpha^* E_i^l S_\alpha) u \ttau'_\rho(S_\alpha^* E_i^l  S_\alpha) \\
= & \ind_{\pi\circ\rho(E_i^l)} \ttau'_\rho( S_\alpha) u \ttau'_\rho(S_\alpha^* ),
\end{align*}
proving the equality \eqref{eq:lem5.51}.
\end{proof}
\begin{corollary}\label{cor:5.6}
For an extension $\sigma:\OL\longrightarrow Q(H)$, 
take  trivial extensions
$\tau, \tau':\OL\longrightarrow B(H)$
satisfying  
$\sigma|_{\AL} = \ttau|_{\AL}= \ttau'|_{\AL}$.
Then one may find a unitary
$V \in B(H)$ such that 
$
(\pi(V), \tau|_{\AL}) \in \K^0(\AL)
$
and the equality
\begin{equation}\label{eq:coro5.62}
\ind[(U_{\sigma,\ttau}, \tau|_{\AL})] -\ind[( U_{\sigma,\ttau'}, \tau'|_{\AL})]
= 
\ind[(\pi(V), \tau|_{\AL})] -\ind[( \phi_{\ttau}(\pi(V)), \tau|_{\AL})]
\end{equation}
holds.
\end{corollary}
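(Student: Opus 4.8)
The plan is to extract $V$ directly from Lemma \ref{lem:5.4} and then transport all the classes into a single copy of $\K^0(\AL)$ built on the fixed representation $\rho := \tau|_{\AL}$, after which the additive structure of Fredholm modules finishes the identity by bookkeeping. First I would apply Lemma \ref{lem:5.4} with $\tau_1 = \tau$ and $\tau_2 = \tau'$ to produce a unitary $V \in B(H)$ with $\pi(V)\sigma(a) = \sigma(a)\pi(V)$ for $a \in \AL$ and
\[
U_{\sigma,\ttau'} = U_{\sigma,\ttau}\,\phi_{\ttau}(\pi(V))\,\pi(V)^{*}.
\]
The commutation relation says precisely that $(\pi(V),\tau|_{\AL})$ is a Fredholm module over $\AL$, hence an element of $\K^0(\AL)$, which is the first assertion of the corollary; and Lemma \ref{lem:5.5}, applied with $u = \pi(V)$ and $\rho = \tau|_{\AL}$, guarantees that $\phi_{\ttau}(\pi(V))$ is a unitary commuting with $\sigma(\AL) = \pi\circ\rho(\AL)$, so $[(\phi_{\ttau}(\pi(V)),\tau|_{\AL})]$ is also a well-defined class.

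Next I would reduce all the classes to the common representation $\rho = \tau|_{\AL}$. Since $\pi\circ\tau'|_{\AL} = \sigma|_{\AL} = \pi\circ\tau|_{\AL}$ and $U_{\sigma,\ttau'}$ commutes with $\sigma(\AL)$ by Lemma \ref{lem:5.3}, Lemma \ref{lem:5.2p} gives $[(U_{\sigma,\ttau'},\tau'|_{\AL})] = [(U_{\sigma,\ttau'},\tau|_{\AL})]$. Working now entirely in $\K^0(\AL)$ over $\rho$ and using the identities $[(u_1,\rho)]+[(u_2,\rho)] = [(u_1u_2,\rho)]$ and $-[(u,\rho)] = [(u^{*},\rho)]$ recorded before Lemma \ref{lem:5.2p}, the left-hand difference collapses to
\[
[(U_{\sigma,\ttau},\tau|_{\AL})] - [(U_{\sigma,\ttau'},\tau|_{\AL})] = [(U_{\sigma,\ttau}\,U_{\sigma,\ttau'}^{*},\,\tau|_{\AL})].
\]
Substituting the relation from Lemma \ref{lem:5.4} yields $U_{\sigma,\ttau}\,U_{\sigma,\ttau'}^{*} = U_{\sigma,\ttau}\,\pi(V)\,\phi_{\ttau}(\pi(V))^{*}\,U_{\sigma,\ttau}^{*}$, i.e.\ a conjugate of $\pi(V)\phi_{\ttau}(\pi(V))^{*}$ by $U_{\sigma,\ttau}$.

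The \emph{key step} is then to discard this conjugation. Because $U_{\sigma,\ttau}$ is a unitary commuting with $\sigma(\AL) = \pi\circ\rho(\AL)$ by Lemma \ref{lem:5.3}, the pair $(U_{\sigma,\ttau},\rho)$ is itself a Fredholm module, and for any unitary $X$ commuting with $\pi\circ\rho(\AL)$ the additivity together with commutativity of the abelian group $\K^0(\AL)$ give
\[
[(U_{\sigma,\ttau}\,X\,U_{\sigma,\ttau}^{*},\rho)] = [(U_{\sigma,\ttau},\rho)] + [(X,\rho)] + [(U_{\sigma,\ttau}^{*},\rho)] = [(X,\rho)],
\]
since $[(U_{\sigma,\ttau},\rho)] + [(U_{\sigma,\ttau}^{*},\rho)] = [(1,\rho)] = 0$. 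Taking $X = \pi(V)\phi_{\ttau}(\pi(V))^{*}$ and splitting once more,
\[
[(U_{\sigma,\ttau}\,U_{\sigma,\ttau'}^{*},\rho)] = [(\pi(V),\rho)] - [(\phi_{\ttau}(\pi(V)),\rho)].
\]
Applying the additive index map $\ind$ of \eqref{eq:indl} to both sides then produces \eqref{eq:coro5.62}. I do not expect a genuine obstacle here: the content is entirely supplied by Lemmas \ref{lem:5.2p}--\ref{lem:5.5}, and the only points that demand care are checking that every pair written down really is a Fredholm module over $\AL$ (so that its class, and hence its index, is defined) and keeping the conjugation-invariance computation free of the usual sign and ordering slips.
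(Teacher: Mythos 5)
Your argument is correct and takes essentially the same route as the paper: both apply Lemma \ref{lem:5.4} to obtain $V$ and the factorization $U_{\sigma,\ttau'} = U_{\sigma,\ttau}\,\phi_{\ttau}(\pi(V))\,\pi(V)^{*}$, then use the additivity $[(u_1u_2,\rho)] = [(u_1,\rho)]+[(u_2,\rho)]$ together with Lemma \ref{lem:5.2p} to replace $\tau'|_{\AL}$ by $\tau|_{\AL}$. Your intermediate step of cancelling the conjugation by $U_{\sigma,\ttau}$ is only a cosmetic reordering of the paper's direct expansion of the product, so nothing substantive differs.
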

\begin{proof}
Take a unitary $V \in B(H)$ satisfying \eqref{eq:lem2.4}.
We then have
$U_{\sigma,\ttau'} = U_{\sigma,\ttau}\phi_{\ttau}(\pi(V)) \pi(V)^*$
so that 
\begin{equation*}
\ind[(U_{\sigma,\ttau'}, \tau|_{\AL})]
= \ind[( U_{\sigma,\ttau}, \tau|_{\AL})]
+ \ind[( \phi_{\ttau}(\pi(V)), \tau|_{\AL})]
+ \ind[(\pi(V)^*, \tau|_{\AL})]. 
\end{equation*}
As 
$\ind[(U_{\sigma,\ttau'}, \tau|_{\AL})]
=\ind[(U_{\sigma,\ttau'}, \tau'|_{\AL})],
$ 
we get the equality \eqref{eq:coro5.62}.
\end{proof}
Define the subgroup
$\Z_0^{m(l)}$ of $\Z^{m(l)}$ by
$$
\Z_0^{m(l)} = \{ (n_i^l)_{i=1}^{m(l)}\in \Z^{m(l)} \mid \sum_{i=1}^{m(l)} n_i^l =0\}.
$$
As 
$I_{l,l+1}\Z_0^{m(l+1)} \subset \Z_0^{m(l)}, \, l \in \Zp$,  
we have a projective system 
$\{I_{l,l+1}:\Z_0^{m(l+1)} \longrightarrow  \Z_0^{m(l)}, \, l \in \Zp \}$
of abelian groups.
Define the subgroup $\Z_{I,0} $ of $\Z_I$ by
the abelian group 
$
\varprojlim \{I_{l,l+1}: 
\Z_0^{m(l+1)} \longrightarrow  \Z_0^{m(l)} \}
$
of the projective limit, so that 
$$
\Z_{I,0} = \{ (n^l)_{l\in \Zp} \in \prod_{l=0}^\infty \Z_0^{m(l)} \mid I_{l,l+1}n^{l+1} = n^l \}.
$$
The matrices 
$I_{l,l+1}, l \in \Zp$ act on $\Z_{I,0}$ by
$(n^l)_{l\in \Zp} \in \Z_{I,0} \longrightarrow (I_{l,l+1} n^{l+1})_{l\in \Zp}$
 as the identity denoted by $I$.
Define the subgroup $\K^0(\AL)_0$ of $\K^0(\AL)$ by
\begin{align*}
\K^0(\AL)_0 
= \{&  [(\pi(V), \rho)] \in \K^0(\AL) \mid
V \in U(B(H)) \},
\end{align*}
where $U(B(H))$ denotes the group of unitaries in $B(H)$.
\begin{lemma}\label{lem:16p}
The correspondence
$$
\Ind_l: [(u,\rho)] \in \K^0(\AL) 
\longrightarrow
(\ind_{\pi\circ\rho(E_i^l)}u)_{i=1}^{m(l)} \in \Z^{m(l)}
\quad \text{for } l \in \Zp 
$$ 
yields an isomorphism
$
\Ind : \K^0(\AL)_0 \longrightarrow \Z_{I,0}.
$
\end{lemma}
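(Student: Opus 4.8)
The plan is to realize $\Ind|_{\K^0(\AL)_0}$ as the restriction of the isomorphism $\Ind\colon\K^0(\AL)\to\Z_I$ of \eqref{eq:KinjlimZ} and to identify its image with the subgroup $\Z_{I,0}$. The one analytic ingredient I would isolate at the outset is the additivity of the Fredholm index across the decomposition $1=\sum_{i=1}^{m(l)}\rho(E_i^l)$. For a Fredholm module $(u,\rho)$ over $\AL$ with $\rho$ unital, choose a lift $T\in B(H)$ of $u$. Since $u$ commutes with each $\pi(\rho(E_i^l))$, every commutator $[T,\rho(E_i^l)]$ is compact; as the $\rho(E_i^l)$ are orthogonal projections summing to $1$, the off-diagonal blocks $\rho(E_i^l)\,T\,\rho(E_j^l)$ with $i\neq j$ are compact, so $T$ is block-diagonal modulo $K(H)$. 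Additivity of the index then gives
$$\sum_{i=1}^{m(l)}\ind_{\pi\circ\rho(E_i^l)}u=\ind(T),$$
the Fredholm index of the lift $T$, independently of $l$.

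With this identity in hand, the first step is to check $\Ind(\K^0(\AL)_0)\subseteq\Z_{I,0}$. For a class $[(\pi(V),\rho)]$ with $V\in U(B(H))$ one takes the unitary lift $T=V$, so $\ind(T)=0$; hence each coordinate vector $(\ind_{\pi\circ\rho(E_i^l)}\pi(V))_{i=1}^{m(l)}$ has vanishing entry-sum and lies in $\Z_0^{m(l)}$. Since $\Ind([(\pi(V),\rho)])$ already sits in $\Z_I$ (the compatibility $I_{l,l+1}$ holds by the relation $E_i^l=\sum_j I_{l,l+1}(i,j)E_j^{l+1}$), it lies in $\Z_{I,0}$. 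Injectivity of $\Ind|_{\K^0(\AL)_0}$ is then inherited for free from injectivity of $\Ind$ on all of $\K^0(\AL)$.

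The substantive direction is surjectivity, where the key is a lifting statement for Calkin unitaries. Given $(n^l)_l\in\Z_{I,0}$, I would use that $\Ind\colon\K^0(\AL)\to\Z_I$ is onto to pick a representative $(u,\rho)$ with $\Ind([(u,\rho)])=(n^l)_l$; by the index identity above, any lift $T$ of $u$ satisfies $\ind(T)=\sum_{i=1}^{m(l)}n_i^l=0$ because $(n^l)_l\in\Z_{I,0}$. A unitary of the Calkin algebra with Fredholm index $0$ lies in the identity component of $U(Q(H))$, which by Kuiper's theorem coincides with $\pi(U(B(H)))$; hence $u=\pi(V)$ for some $V\in U(B(H))$. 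Therefore $[(u,\rho)]=[(\pi(V),\rho)]\in\K^0(\AL)_0$ realizes the prescribed image, establishing surjectivity and completing the isomorphism $\Ind\colon\K^0(\AL)_0\to\Z_{I,0}$.

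I expect the main obstacle to be the clean justification of the two index facts: the additivity identity $\sum_i\ind_{\pi\circ\rho(E_i^l)}u=\ind(T)$, which rests on the block-diagonal-modulo-compacts argument and requires $\rho$ to be unital so that $\sum_i\rho(E_i^l)=1$, and the lifting of an index-zero Calkin unitary to an honest unitary of $B(H)$. Once these are in place, the remainder is the formal bookkeeping of restricting an isomorphism to the subgroup $\K^0(\AL)_0$ and matching it against the zero-sum subgroup $\Z_{I,0}$.
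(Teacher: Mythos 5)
Your proposal is correct and takes essentially the same route as the paper: the paper likewise treats the map as the restriction of the isomorphism $\Ind\colon \K^0(\AL)\to\Z_I$ of \eqref{eq:KinjlimZ}, derives the containment $\Ind(\K^0(\AL)_0)\subseteq\Z_{I,0}$ from the additivity $\sum_{i=1}^{m(l)}\ind_{\pi\circ\rho(E_i^l)}\pi(V)=\ind_{\pi\circ\rho(1)}\pi(V)=0$ for an honest unitary $V\in B(H)$, and proves surjectivity by choosing a Fredholm module $(u,\rho)$ realizing a given element of $\Z_{I,0}$ and lifting the index-zero Calkin unitary $u$ to a unitary $V\in B(H)$ with $\pi(V)=u$. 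The only difference is that you make explicit the supporting facts (block-diagonality of a lift modulo compacts, and the identity-component/Kuiper justification of the lifting) that the paper uses without comment.
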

\begin{proof}
For $V \in U(B(H))$, we have
\begin{equation*}
\sum_{i=1}^{m(l)} \ind_{\pi\circ\rho(E_i^l)} \pi(V)
=\ind_{ \sum_{i=1}^{m(l)} \pi\circ\rho(E_i^l)} \pi(V)
=\ind_{\pi\circ\rho(1)} \pi(V)
=0
\end{equation*}
so that
$(\ind_{\pi\circ\rho(E_i^l)} \pi(V))_{i=1}^{m(l)} \in \Z_0^{m(l)}$.
Hence 
$\Ind( [(u,\rho)]) \in \Z_{I,0}$ for $[(u,\rho)]\in \K^0(\AL)_0.$
Conversely,
for any $(n^l)_{l \in \Zp}\in \Z_{I,0}$ with
$n^l =(n_i^l)_{i=1}^{m(l)} \in \Z_0^{m(l)}, l \in \Zp,$
 one may find a Fredholm module $(u,\rho) $ over $\AL$ such that 
 $\Ind[(u,\rho)] = (n^l)_{l\in \Zp}$.
 As
$$
0 = \sum_{i=1}^{m(l)} \ind_{\pi\circ\rho(E_i^l)}u = \ind_H(u),
$$ 
there exists a unitary $V \in B(H)$ such that 
$\pi(V) =u$.
Hence we have
$\Ind[(\pi(V),\rho)] = (n^l)_{l \in \Zp} \in \Z_{I,0}$,
proving
$\Ind(\K^0(\AL)_0) = \Z_{I,0}.$
\end{proof}
Let  $(A_{l,l+1}, I_{l,l+1})_{l\in \Zp}$ 
be the structure matrices for the $\lambda$-graph system $\L$.
Put
$A_{l,l+1}^\L(i,j) = \sum_{\alpha \in \Sigma} A_{l,l+1}(i,\alpha,j)$
for $i=1,\dots, m(l), \, j=1,\dots, m(l+1)$, which satisfies
the relation
\begin{equation}\label{eq:commuteIA}
I_{l,l+1} A^\L_{l+1,l+2} =A^\L_{l,l+1} I_{l+1,l+2},
\qquad
l \in \Zp.
\end{equation}
Let
$A_\L:\Z_I \longrightarrow \Z_I$ 
be the endomorphism on the group
$\Z_I$ defined by
$$
A_\L((x_l)_{l\in \Zp}) 
= (A^\L_{l,l+1}x_{l+1})_{l\in \Zp}.
$$
By the identity \eqref{eq:commuteIA}, 
we know that 
$ (A^\L_{l,l+1}x_{l+1})_{l\in \Zp} $ belongs to $\Z_I$ 
for 
$(x_l)_{l\in \Zp} \in \Z_I$, 
so that
$A_\L:\Z_I \longrightarrow \Z_I$ 
yields an endomorphism on $\Z_I$.
\begin{lemma}\label{lem:5.7}
The map
$\phi: [(\pi(V),\rho)] \in \K^0(\AL)_0 \longrightarrow 
[(\phi_{\ttau_\rho}(\pi(V)), \rho)] \in \K^0(\AL)
$
gives rise to the commutative diagram
\begin{equation}\label{eq:CDphiAL}
\begin{CD}
\K^0(\AL)_0 @>{\phi}>> \K^0(\AL) \\
@V{\Ind}VV     @VV{\Ind}V \\ 
\Z_{I,0} @>{A_\L}>> \Z_I.
\end{CD}
\end{equation}
\end{lemma}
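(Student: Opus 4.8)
The plan is to verify that the diagram \eqref{eq:CDphiAL} commutes, that is, that $\Ind \circ \phi = A_\L \circ \Ind$ on $\K^0(\AL)_0$, by a level-by-level, entry-by-entry comparison of Fredholm indices. Fix a class $[(\pi(V),\rho)] \in \K^0(\AL)_0$ together with a trivial extension $\tau_\rho:\OL\longrightarrow B(H)$ with $\tau_\rho|_{\AL} = \rho$, and write $u = \pi(V)$. Since $(\phi_{\ttau_\rho}(u),\rho)$ is a Fredholm module over $\AL$ by Lemma \ref{lem:5.5}, its class lies in $\K^0(\AL)$, and $\Ind$ sends it to the element of $\Z_I$ whose $l$-th component has $i$-th entry $\ind_{\pi\circ\rho(E_i^l)}\phi_{\ttau_\rho}(u)$. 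By the definitions of $A_\L$ and of $\Ind$, it therefore suffices to prove, for every $l \in \Zp$ and $i=1,\dots,m(l)$, the identity
\begin{equation*}
\ind_{\pi\circ\rho(E_i^l)}\phi_{\ttau_\rho}(u) = \sum_{j=1}^{m(l+1)} A^\L_{l,l+1}(i,j)\, \ind_{\pi\circ\rho(E_j^{l+1})} u,
\end{equation*}
whose right-hand side is exactly the $i$-th entry of the $l$-th component of $A_\L(\Ind[(\pi(V),\rho)])$.

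First I would reduce $\ind_{\pi\circ\rho(E_i^l)}\phi_{\ttau_\rho}(u)$ to a sum over $\Sigma$. As in the proof of Lemma \ref{lem:5.5}, the summands $\ttau_\rho(S_\alpha) u \ttau_\rho(S_\alpha^*)$ commute with $\pi\circ\rho(E_i^l)$ and the index splits additively, so $\ind_{\pi\circ\rho(E_i^l)}\phi_{\ttau_\rho}(u) = \sum_{\alpha \in \Sigma}\ind_{\pi\circ\rho(E_i^l)}\ttau_\rho(S_\alpha) u \ttau_\rho(S_\alpha^*)$. The partial-isometry manipulation in that same proof identifies each summand with $\ind_{\pi\circ\rho(S_\alpha^* E_i^l S_\alpha)}(\ttau_\rho(S_\alpha^* E_i^l S_\alpha) u \ttau_\rho(S_\alpha^* E_i^l S_\alpha))$; and since $S_\alpha^* E_i^l S_\alpha \in \AL$, the element $\ttau_\rho(S_\alpha^* E_i^l S_\alpha) = \pi\circ\rho(S_\alpha^* E_i^l S_\alpha)$ is a projection commuting with $u$, so this compression index is just $\ind_{\pi\circ\rho(S_\alpha^* E_i^l S_\alpha)} u$.

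Next I would insert the structure relation $S_\alpha^* E_i^l S_\alpha = \sum_{j} A_{l,l+1}(i,\alpha,j) E_j^{l+1}$ from Proposition \ref{prop:relationL}. Because the projections $\pi\circ\rho(E_j^{l+1})$ are mutually orthogonal and each commutes with $u$ (as $(\pi(V),\rho) \in \K^0(\AL)_0$), the Fredholm index is additive over this orthogonal family, which is exactly the additivity already used to derive \eqref{eq:indl}; this gives $\ind_{\pi\circ\rho(S_\alpha^* E_i^l S_\alpha)} u = \sum_{j} A_{l,l+1}(i,\alpha,j)\, \ind_{\pi\circ\rho(E_j^{l+1})} u$. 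Summing over $\alpha$ and recalling $A^\L_{l,l+1}(i,j) = \sum_{\alpha} A_{l,l+1}(i,\alpha,j)$ yields the displayed identity for all $l$ and $i$, hence the commutativity of \eqref{eq:CDphiAL}. As a byproduct, since $\Ind$ is injective on $\K^0(\AL)$ (Lemma \ref{lem:16p} and \eqref{eq:indl}), the composite $A_\L \circ \Ind$ determines the value $\phi[(\pi(V),\rho)]$ uniquely, so $\phi$ is well defined at the level of classes.

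Finally, I expect the only genuinely delicate point to be the partial-isometry index manipulation that rewrites $\ind_{\pi\circ\rho(E_i^l)}\ttau_\rho(S_\alpha) u \ttau_\rho(S_\alpha^*)$ as a compression index at level $l+1$; but this has already been carried out verbatim inside the proof of Lemma \ref{lem:5.5}, so here it amounts to bookkeeping. The rest, namely additivity of the index over orthogonal projections and matching the coefficients $A^\L_{l,l+1}(i,j)$ with the definition of the endomorphism $A_\L$, is routine, the one hypothesis to keep in sight being that $u = \pi(V)$ commutes with all of $\pi\circ\rho(\AL)$.
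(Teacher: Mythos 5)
Your proposal is correct and is essentially the paper's own proof: the paper establishes the commutativity by exactly the entrywise index computation you describe, splitting $\ind_{\pi\circ\rho(E_i^l)}\phi_{\ttau_\rho}(\pi(V))$ into a sum over $\alpha\in\Sigma$, compressing each summand to $\pi\circ\rho(S_\alpha^* E_i^l S_\alpha)$ by the same partial-isometry manipulation as in Lemma \ref{lem:5.5}, inserting the relation $S_\alpha^* E_i^l S_\alpha = \sum_{j=1}^{m(l+1)} A_{l,l+1}(i,\alpha,j)E_j^{l+1}$, and summing over $\alpha$ to produce $A^\L_{l,l+1}(i,j)$. The only cosmetic difference is that you invoke the commutativity of $\pi(V)$ with $\pi\circ\rho(\AL)$ to reduce the compressed index to $\ind_{\pi\circ\rho(E_j^{l+1})}\pi(V)$ one step earlier than the paper does.
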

 \begin{proof}
We have
 \begin{align*}
 \ind_{\pi\circ\rho(E_i^l)}\phi_{\ttau_\rho}(\pi(V))
 = & \sum_{\alpha \in \Sigma}  \ind_{\pi\circ\rho(E_i^l)} \ttau(S_\alpha) \pi(V) \ttau(S_\alpha^*) \\
 = & \sum_{\alpha \in \Sigma}  \ind_{\pi\circ\rho(E_i^l)} \pi(\tau(S_\alpha) V \tau(S_\alpha^*))\\
 = & \sum_{\alpha \in \Sigma}  \ind_{\pi\circ\rho(S_\alpha^* E_i^l S_\alpha)} \pi(\tau(S_\alpha^* E_i^l S_\alpha) V \tau(S_\alpha^* E_i^l S_\alpha))\\
 = & \sum_{\alpha \in \Sigma}  \sum_{j=1}^{m(l+1)} A_{l,l+1}(i,\alpha,j)
     \ind_{\pi\circ\rho( E_j^{l+1})} \pi(\tau( E_j^{l+1}) V \tau( E_j^{l+1})) \\
 = &  \sum_{j=1}^{m(l+1)}  A_{l,l+1}^\L(i,j)
     \ind_{\pi\circ\rho( E_j^{l+1})} \pi(V)
  \end{align*}
  so that we have
  $A_\L \circ \Ind = \Ind \circ \phi: \K^0(\AL)_0 \longrightarrow \Z_I.$
 \end{proof}
For an extension $\sigma:\OL\longrightarrow Q(H)$, 
take a trivial extension
$\tau:\OL\longrightarrow B(H)$ satisfying 
$\sigma|_{\AL} = \pi\circ \tau|_{\AL}$
and consider the unitary
$U_{\sigma,\ttau}= \sum_{\alpha \in \Sigma} \sigma(S_\alpha) \ttau(S_\alpha^*) \in Q(H)$.
By Lemma \ref{lem:5.3}, the pair 
$(U_{\sigma,\ttau}, \tau|_{\AL})$ defines an element of $\K^0(\AL)$.
Put the Fredholm module 
\begin{equation*}
d(\sigma,\tau) = (U_{\sigma,\ttau}, \tau|_{\AL})
\end{equation*}
over $\AL$
so that the class
$[d(\sigma,\tau)]$ defines an element of $\K^0(\AL)$.
Corollary \ref{cor:5.6} together with
Lemma \ref{lem:5.7}
says that 
for  trivial extensions
$\tau, \tau':\OL\longrightarrow B(H)$ such that 
$\sigma|_{\AL} = \ttau|_{\AL}= \ttau'|_{\AL}$,
we have
\begin{align*}
\Ind[d(\sigma,\tau)] - \Ind[d(\sigma,\tau')]
= &(I - A_\L)  \ind[(\pi(V),\tau|_{\AL})].  
\end{align*}
As $\ind[(\pi(V),\tau|_{\AL})] \in \Z_{I,0}$,
the class
$$
[\Ind[d(\sigma,\tau)]] \in \Z_I/(I-A_\L)\Z_{I,0}
$$
is independent of the choice of $\tau$ 
as long as $\pi\circ\tau|_{\AL} =\sigma|_{\AL}.$
\begin{lemma}\label{lem:24p}
For an extension $\sigma_1: \OL \longrightarrow Q(H)$,
take a trivial extension
$\tau_1:\OL\longrightarrow B(H)$ 
such that 
$\sigma_1|_{\AL} = \pi\circ \tau_1|_{\AL}$.
Let
$\sigma_2:\OL \longrightarrow Q(H)$
be an extension strongly equivalent to 
$\sigma_1$.
Then there exists 
a trivial extension
$\tau_2:\OL\longrightarrow B(H)$ such that 
$\sigma_2|_{\AL} = \pi\circ \tau_2|_{\AL}$
and
$$
[\Ind[d(\sigma_1,\tau_1)]] =
[\Ind[d(\sigma_2,\tau_2)]]
\text{ in }
\Z_I/(I - A_\L)\Z_{I,0}.
$$
\end{lemma}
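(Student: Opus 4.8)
The plan is to make the obvious functorial choice of $\tau_2$ and then to show that the index data is \emph{literally} unchanged, so that equality holds already in $\Z_I$ (and a fortiori in the quotient $\Z_I/(I-A_\L)\Z_{I,0}$). Since $\sigma_2$ is strongly equivalent to $\sigma_1$, by definition there is a unitary $U \in U(B(H))$ with $\sigma_2(X) = \pi(U)\sigma_1(X)\pi(U)^*$ for all $X \in \OL$. I would set $\tau_2 = \Ad(U)\circ\tau_1 : \OL\longrightarrow B(H)$. As $\Ad(U)$ is an automorphism of $B(H)$ and $\tau_1$ is a trivial extension, $\tau_2$ is again a trivial extension. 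Writing $\ttau_i = \pi\circ\tau_i$ and $\rho_i = \tau_i|_{\AL}$, one has $\ttau_2 = \Ad(\pi(U))\circ\ttau_1$, so on $\AL$,
\[
\ttau_2|_{\AL} = \Ad(\pi(U))\circ\ttau_1|_{\AL} = \Ad(\pi(U))\circ\sigma_1|_{\AL} = \sigma_2|_{\AL},
\]
which is exactly the compatibility required of $\tau_2$.

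Next I would compute the unitary $U_{\sigma_2,\ttau_2}$. Substituting $\sigma_2(S_\alpha) = \pi(U)\sigma_1(S_\alpha)\pi(U)^*$ and $\ttau_2(S_\alpha^*) = \pi(U)\ttau_1(S_\alpha^*)\pi(U)^*$ and cancelling the inner $\pi(U)^*\pi(U) = 1$, one obtains
\[
U_{\sigma_2,\ttau_2} = \sum_{\alpha\in\Sigma}\pi(U)\sigma_1(S_\alpha)\ttau_1(S_\alpha^*)\pi(U)^* = \pi(U)\,U_{\sigma_1,\ttau_1}\,\pi(U)^*.
\]
Thus the Fredholm module $d(\sigma_2,\tau_2) = (U_{\sigma_2,\ttau_2},\rho_2)$ is obtained from $d(\sigma_1,\tau_1) = (U_{\sigma_1,\ttau_1},\rho_1)$ by conjugating both entries by the single unitary $U$ (resp.\ $\pi(U)$): namely $\rho_2 = \Ad(U)\circ\rho_1$ and $U_{\sigma_2,\ttau_2} = \Ad(\pi(U))(U_{\sigma_1,\ttau_1})$.

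Finally I would compare the invariants through $\Ind_l$. Fix $l$ and $i$ and note $\pi\circ\rho_2(E_i^l) = \pi(U)\,\pi\circ\rho_1(E_i^l)\,\pi(U)^*$. Taking a lift $T \in B(H)$ of $U_{\sigma_1,\ttau_1}$ and a projection $E \in B(H)$ with $\pi(E) = \pi\circ\rho_1(E_i^l)$, the projection $UEU^*$ lifts $\pi\circ\rho_2(E_i^l)$ and $UTU^*$ lifts $U_{\sigma_2,\ttau_2}$. Since $(UEU^*)(UTU^*)(UEU^*) = U(ETE)U^*$ and $U$ restricts to a unitary $EH \longrightarrow UEU^*H$ intertwining the two compressions, the Fredholm indices coincide:
\[
\ind_{\pi\circ\rho_2(E_i^l)} U_{\sigma_2,\ttau_2} = \ind_{\pi\circ\rho_1(E_i^l)} U_{\sigma_1,\ttau_1}, \qquad i=1,\dots,m(l),\ l\in\Zp.
\]
Hence $\Ind[d(\sigma_2,\tau_2)] = \Ind[d(\sigma_1,\tau_1)]$ already in $\Z_I$, and in particular the two classes agree in $\Z_I/(I-A_\L)\Z_{I,0}$, as claimed. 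The only delicate point is the conjugation-invariance of the Fredholm index when the reference projection is itself conjugated; this is routine, but it is where care is needed, since it is the conjugation $UEU^*$ of the honest lift (not merely $\pi(U)$-conjugation of $\pi\circ\rho_1(E_i^l)$) that makes the restriction $U|_{EH}$ a genuine unitary implementing the equality of indices.
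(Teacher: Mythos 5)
Your proposal is correct and follows essentially the same route as the paper: the paper likewise sets $\tau_2 = \Ad(V)\circ\tau_1$ for the implementing unitary $V\in B(H)$, observes $U_{\sigma_2,\ttau_2} = \pi(V)\,U_{\sigma_1,\ttau_1}\,\pi(V)^*$ together with $\sigma_2(E_i^l) = \pi(V)\sigma_1(E_i^l)\pi(V)^*$, and concludes $\Ind[d(\sigma_1,\tau_1)] = \Ind[d(\sigma_2,\tau_2)]$ already in $\Z_I$ by conjugation-invariance of the compressed Fredholm index. The only difference is cosmetic: the paper asserts that invariance in one line, whereas you justify it by conjugating the honest lifts $E$ and $T$, which is exactly the routine verification the paper suppresses.
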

\begin{proof}
Since $\sigma_2$ is strongly equivalent to
$\sigma_1$,
one may take a unitary
$V \in B(H)$ such that 
$\sigma_2 = \Ad(\pi(V))\circ \sigma_1$.
Define a trivial extension
$\tau_2:\OL\longrightarrow B(H)$
by $\tau_2 = \Ad(V) \circ \tau_1$
satisfying 
$\sigma_2|_{\AL} = \pi\circ \tau_2|_{\AL}$. 
We then have
\begin{align*}
\Ind[d(\sigma_2,\tau_2)]
= & [(\ind_{\sigma_2(E_i^l)}
     U_{\sigma_2,\ttau_2})_{i=1}^{m(l)})_{l\in \Zp}] \\ 
= & [(\ind_{\sigma_2(E_i^l)}
     \sum_{\alpha \in \Sigma} 
     \sigma_2(S_\alpha)\ttau_2(S_\alpha^*))_{i=1}^{m(l)})_{l\in \Zp}].
\end{align*}
Now we have
\begin{align*}
    \ind_{\sigma_2(E_i^l)}\sum_{\alpha \in \Sigma} \sigma_2(S_\alpha)\ttau_2(S_\alpha^*)
= & \ind_{\pi(V) \sigma_1(E_i^l)\pi(V^*)}
    \sum_{\alpha \in \Sigma} \pi(V)\sigma_1(S_\alpha)\ttau_1(S_\alpha^*)\pi(V)^* \\
= & \ind_{\sigma_1(E_i^l)}
    \sum_{\alpha \in \Sigma} \sigma_1(S_\alpha)\ttau_1(S_\alpha^*) 
=  \ind_{\sigma_1(E_i^l)}
    U_{\sigma_1, \ttau_1}
\end{align*}
so that $\Ind[d(\sigma_1,\tau_1)]
 =\Ind[d(\sigma_2,\tau_2)].$
\end{proof}
Therefore we have 
\begin{proposition}\label{prop:5.8}
For the strong euivalence class 
$[\sigma]_s \in \Exts(\OL)$
of an extension $\sigma:\OL\longrightarrow Q(H)$,
the class $[\Ind[d(\sigma,\tau)]]$ in the group
$\Z_I/(I-A_\L)\Z_{I,0}$
is independent of the choice of a trivial extension
$\tau:\OL\longrightarrow B(H)$ as long as
$\sigma|_{\AL} = \tau|_{\AL}$. 
\end{proposition}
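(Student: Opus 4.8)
The plan is to combine two facts already in hand: for a fixed extension $\sigma$ the index class is insensitive to the choice of trivial lift $\tau$, and passing to a strongly equivalent extension does not alter it. Together these say that $[\sigma]_s \mapsto [\Ind[d(\sigma,\tau)]]$ is a well-defined map $\Exts(\OL)\to \Z_I/(I-A_\L)\Z_{I,0}$, which is the content of the proposition.

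For the first fact I would fix $\sigma$ together with two trivial extensions $\tau,\tau'$ satisfying $\sigma|_{\AL}=\ttau|_{\AL}=\ttau'|_{\AL}$. Lemma \ref{lem:5.4} produces a unitary $V\in B(H)$ obeying \eqref{eq:lem2.4}, and Corollary \ref{cor:5.6} then yields \eqref{eq:coro5.62}. Reading that identity through the isomorphism $\Ind$ and using the commutative diagram \eqref{eq:CDphiAL} of Lemma \ref{lem:5.7} to rewrite $\Ind[(\phi_{\ttau}(\pi(V)),\tau|_{\AL})]=A_\L\,\Ind[(\pi(V),\tau|_{\AL})]$, I obtain
$$
\Ind[d(\sigma,\tau)]-\Ind[d(\sigma,\tau')]=(I-A_\L)\,\Ind[(\pi(V),\tau|_{\AL})].
$$
The decisive step is to check that this correction lies in the smaller subgroup $(I-A_\L)\Z_{I,0}$ and not merely in $(I-A_\L)\Z_I$. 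Since $\pi(V)$ comes from a genuine unitary $V\in B(H)$, Lemma \ref{lem:16p} gives $\Ind[(\pi(V),\tau|_{\AL})]\in\Z_{I,0}$; concretely the fibrewise indices over $E_1^l,\dots,E_{m(l)}^l$ sum to $\ind_H(\pi(V))=0$. Hence the difference vanishes in $\Z_I/(I-A_\L)\Z_{I,0}$, proving independence of $\tau$.

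For the second fact I would simply invoke Lemma \ref{lem:24p}: if $\sigma_2=\Ad(\pi(V))\circ\sigma_1$ is strongly equivalent to $\sigma_1$, then the lift $\tau_2=\Ad(V)\circ\tau_1$ gives $[\Ind[d(\sigma_1,\tau_1)]]=[\Ind[d(\sigma_2,\tau_2)]]$. Combining the two facts establishes that the class depends only on $[\sigma]_s$. I expect no serious analytic obstacle here, since the substance is carried by the earlier lemmas; the one point that must be tracked with care is keeping the correction term inside $\Z_{I,0}$, as this sum-zero constraint — a consequence of $V$ lifting to $B(H)$ rather than living only in $Q(H)$ — is exactly what distinguishes the strong-extension quotient $\Z_I/(I-A_\L)\Z_{I,0}$ from its weak counterpart $\Z_I/(I-A_\L)\Z_I$.
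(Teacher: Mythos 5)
Your proposal is correct and coincides with the paper's own argument: the paper likewise combines Corollary \ref{cor:5.6} with the commutative diagram of Lemma \ref{lem:5.7} to obtain $\Ind[d(\sigma,\tau)]-\Ind[d(\sigma,\tau')]=(I-A_\L)\ind[(\pi(V),\tau|_{\AL})]$, uses Lemma \ref{lem:16p} (via $V$ lifting to a unitary in $B(H)$) to place the correction term in $(I-A_\L)\Z_{I,0}$, and then invokes Lemma \ref{lem:24p} for invariance under strong equivalence. Your emphasis on the sum-zero constraint being exactly what separates $\Z_I/(I-A_\L)\Z_{I,0}$ from $\Z_I/(I-A_\L)\Z_I$ is precisely the decisive point of the paper's proof.
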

We thus have a homomorphism
$$\Inds: \Exts(\OL) \longrightarrow \Z_I/(I-A_\L)\Z_{I,0}$$ 
defined by
$\Inds([\sigma]_s) = [\Ind[d(\sigma,\tau)]].$

\medskip
Take a trivial extension
$\tau:\OL\longrightarrow B(H)$
and a unitary $u_m \in Q(H)$ of Fredholm index $m \in \Z$
such that 
$\pi(\tau(a)) u_m = u_m\pi(\tau(a))$ for $a \in \AL$.
Define an extension
$\sigma_m:\OL\longrightarrow B(H)$
by $\sigma_m = \Ad(u_m)\circ( \pi\circ\tau)$
so that the class $[\sigma_m]_s$ in $\Exts(\OL)$
is defined for each $m \in \Z$.
We then have a homomorphism
$\iota_\L: m\in \Z \longrightarrow [\sigma_m]_s \in \Exts(\OL)$
such that the sequence
\begin{equation}
\Z \overset{\iota_\L}{\longrightarrow}\Exts(\OL) \overset{q}{\longrightarrow} \Extw(\OL)   
\end{equation}
is exact at the middle, where 
$q:\Exts(\OL)\longrightarrow \Extw(\OL)$ is the natural quotient map (cf. \cite{PP}).

We will introduce a homomorphism
$\hat{\iota}_\L:\Z\longrightarrow \Z_I/(I-A_\L)\Z_{I,0}$
in the following way.
For $m \in \Z$, take an element 
$(n^l)_{l\in \Zp}\in \Z_I$
such that 
$n^l =(n^l_i)_{i=1}^{m(l)} \in \Z^{m(l)}$
and
$m =\sum_{i=1}^{m(l)} n_i^l$ for each $l \in \Zp$.
One may take such a sequence 
as $n^l =(m,0,\dots,0)\in \Z^{m(l)}$.
We then define
\begin{equation*}
\hat{\iota}_\L(m) = [(I- A_\L)[(n^l)_{\in \Zp}]] \in \Z_I/ (I- A_\L)\Z_{I,0}.
\end{equation*}
Let $(n^{\prime l})_{l\in \Zp} \in \Z_I$
be another sequence such that 
$m = \sum_{i=1}^{m(l)} n_i^{\prime l}.$
As
$\sum_{i=1}^{m(l)} (n_i^l - n_i^{\prime l}) = m -m =0,$
we have
$(n^{l})_{l\in \Zp}- (n^{\prime l})_{l\in \Zp} \in \Z_{I,0}$
so that 
$[(I- A_\L)[(n^l)_{\in \Zp}]]
=[(I- A_\L)[(n^{\prime l})_{\in \Zp}]] \in \Z_I/ (I- A_\L)\Z_{I,0}.
$
This shows that 
$\hat{\iota}_\L(m)$ is independent of the choice of $(n^l)_{l\in \Zp}\in \Z_I$
as long as $m = \sum_{i=1}^{m(l)} n_i^l.$
\begin{lemma}\label{lem:29p}
The diagram
\begin{equation}\label{eq:CDextsz}
\begin{CD}
\Z @>{\iota_\L}>> \Exts(\OL)  \\
\parallel @. @VV{\Inds}V      \\ 
\Z @>{\hat{\iota}_\L}>> \Z_I /(I-A_\L)\Z_{I,0} 
\end{CD}
\end{equation}
commutes.
\end{lemma}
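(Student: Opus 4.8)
The plan is to unwind both composites in the diagram \eqref{eq:CDextsz} and show they produce the same element of $\Z_I/(I-A_\L)\Z_{I,0}$. First I would recall that $\iota_\L(m) = [\sigma_m]_s$ with $\sigma_m = \Ad(u_m)\circ(\pi\circ\tau)$, where $u_m$ is a unitary of Fredholm index $m$ commuting with $\ttau(a)$ for all $a \in \AL$. Since $u_m$ commutes with $\ttau|_{\AL}$, we get $\sigma_m|_{\AL} = \Ad(u_m)(\ttau|_{\AL}) = \ttau|_{\AL}$, so the very extension $\tau$ used to build $\sigma_m$ already satisfies $\sigma_m|_{\AL} = \pi\circ\tau|_{\AL}$ and, by Proposition \ref{prop:5.8}, may be used to evaluate $\Inds([\sigma_m]_s) = [\Ind[d(\sigma_m,\tau)]]$.

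Next I would compute the unitary $U_{\sigma_m,\ttau}$. Writing $\sigma_m(S_\alpha) = u_m\ttau(S_\alpha)u_m^*$, one finds
\begin{equation*}
U_{\sigma_m,\ttau} = \sum_{\alpha\in\Sigma}\sigma_m(S_\alpha)\ttau(S_\alpha^*) = u_m\sum_{\alpha\in\Sigma}\ttau(S_\alpha)u_m^*\ttau(S_\alpha^*) = u_m\,\phi_{\ttau}(u_m^*).
\end{equation*}
Using the additivity $[(u_1u_2,\rho)] = [(u_1,\rho)]+[(u_2,\rho)]$ in $\K^0(\AL)$ together with $-[(u,\rho)]=[(u^*,\rho)]$, I would then expand
\begin{equation*}
\Ind[d(\sigma_m,\tau)] = \Ind[(u_m,\tau|_{\AL})] + \Ind[(\phi_{\ttau}(u_m^*),\tau|_{\AL})].
\end{equation*}
The key input is that the index identity established inside the proof of Lemma \ref{lem:5.7}, namely $\ind_{\pi\circ\rho(E_i^l)}\phi_{\ttau_\rho}(w) = \sum_{j} A^\L_{l,l+1}(i,j)\ind_{\pi\circ\rho(E_j^{l+1})}w$, holds for every unitary $w$ commuting with $\pi(\rho(\AL))$, and hence $\Ind[(\phi_{\ttau}(w),\rho)] = A_\L\Ind[(w,\rho)]$ on all of $\K^0(\AL)$. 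Applying this with $w = u_m^*$ gives $\Ind[d(\sigma_m,\tau)] = (I-A_\L)\Ind[(u_m,\tau|_{\AL})]$.

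Finally, I would set $(n^l)_{l\in\Zp} := \Ind[(u_m,\tau|_{\AL})] = ((\ind_{\pi\circ\tau(E_i^l)}u_m)_{i=1}^{m(l)})_{l\in\Zp}\in\Z_I$ and note that for each $l$, summing over $i$ and using $\sum_{i=1}^{m(l)}E_i^l=1$ yields $\sum_{i=1}^{m(l)}n_i^l = \ind_H(u_m) = m$. Thus $(n^l)_{l\in\Zp}$ is precisely an admissible sequence in the definition of $\hat{\iota}_\L(m)$, whence $\Inds(\iota_\L(m)) = [(I-A_\L)[(n^l)_{l\in\Zp}]] = \hat{\iota}_\L(m)$, which is the asserted commutativity. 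The main obstacle to watch is exactly the step invoking Lemma \ref{lem:5.7}: since $\ind_H(u_m)=m\neq 0$ in general, the pair $(u_m,\tau|_{\AL})$ lies in $\K^0(\AL)$ but \emph{not} in the index-zero subgroup $\K^0(\AL)_0$ on which that lemma is stated, so one must verify that the pointwise index computation in its proof never used the vanishing of the total index, and therefore that the identity $\Ind\circ\phi = A_\L\circ\Ind$ extends to all of $\K^0(\AL)$.
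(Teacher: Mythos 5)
Your proof is correct and is essentially the paper's own argument: the paper's proof of Lemma \ref{lem:29p} carries out exactly your factorization $U_{\sigma_m,\tilde{\tau}} = u_m\,\phi_{\tilde{\tau}}(u_m^*)$ at the level of pointwise corner indices, computing $\ind_{\tilde{\tau}(E_i^l)}U_{\sigma_m,\tilde{\tau}} = k_i^l - \sum_{j} A^\L_{l,l+1}(i,j)\,k_j^{l+1}$ with $k_i^l = \ind_{\tilde{\tau}(E_i^l)}u_m$ and $\sum_{i=1}^{m(l)} k_i^l = m$, which is your identity $\Ind[d(\sigma_m,\tau)] = (I-A_\L)\Ind[(u_m,\tau|_{\AL})]$ written entrywise. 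Your caveat about Lemma \ref{lem:5.7} is well taken and correctly resolved: instead of citing that lemma (stated only on $\K^0(\AL)_0$), the paper redoes the index computation inline via $\ind_{\tilde{\tau}(E_i^l)}\tilde{\tau}(S_\alpha)u_m^*\tilde{\tau}(S_\alpha^*) = \ind_{\tilde{\tau}(S_\alpha^* E_i^l S_\alpha)}u_m^*$, which, as you observe, never uses vanishing of the total index.
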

 \begin{proof}
 Take a trivial extension
$\tau:\OL\longrightarrow B(H)$
and a unitary $u_m \in Q(H)$ of Fredholm index $m \in \Z$
such that 
$\pi(\tau(a)) u_m = u_m\pi(\tau(a))$ for $a \in \AL$.
The extension
$\sigma_m:\OL\longrightarrow B(H)$
is defined by $\sigma_m = \Ad(u_m)\circ( \pi\circ\tau)$.
Put
$k_i^l 
= \ind_{\sigma_m(E_i^l)}u_m 
=\ind_{\pi(\tau(E_i^l))}u_m.
$
As
$\ind(u_m) = m$,
we have
$\sum_{i=1}^{m(l)} k_i^l =m$ for each $l \in \Zp.$
Now we have
\begin{equation*}
\Ind_s([\sigma_m]_s) 
=[((\ind_{\sigma_m(E_i^l)} U_{\sigma_m,\ttau})_{i=1}^{m(l)})_{l\in \Zp}]
=[(( \ind_{\pi(\tau(E_i^l))} 
\sum_{\alpha\in \Sigma}\sigma_m(S_\alpha)\pi\circ\tau(S_\alpha^*)))_{l\in \Zp}].
\end{equation*} 
Since we have
\begin{align*}
   &  \ind_{\pi(\tau(E_i^l))} 
    \sum_{\alpha\in \Sigma}\sigma_m(S_\alpha)\pi(\tau(S_\alpha^*)) \\
= & \ind_{\pi(\tau(E_i^l))} 
    u_m ( \sum_{\alpha\in \Sigma}\pi(\tau(S_\alpha)) u_m^* 
    \pi(\tau(S_\alpha^*))) \\
= & \ind_{\pi(\tau(E_i^l))} u_m 
  + \sum_{\alpha\in \Sigma} \ind_{\ttau(E_i^l)}\ttau(S_\alpha) u_m^* \ttau(S_\alpha^*) \\
= & k_i^l 
  + \sum_{\alpha\in \Sigma} \ind_{\ttau(S_\alpha^*E_i^l S_\alpha)}u_m^*  \\
= &  k_i^l 
  + \sum_{\alpha\in \Sigma} \sum_{j=1}^{m(l)} A_{l,l+1}(i,\alpha, j)
    \ind_{\ttau(E_j^{l+1})}u_m^*  \\
= &   k_i^l - \sum_{j=1}^{m(l)} A^\L_{l,l+1}(i, j) k_j^{l+1}
=   (I - A^\L)[(k_j^{l+1})_{j=1}^{m(l+1)}],
\end{align*}
we get
\begin{equation*}
\Ind_s([\sigma_m]_s) =(I - A^\L)[(k_j^{l+1})_{j=1}^{m(l+1)}].
\end{equation*} 
As
$\hat{\iota}_\L(m) = [(I-A_\L)[(k^l)_{l\in\Zp}]$,
we conclude  
$\Ind_s([\sigma_m]_s)=\hat{\iota}_\L(m),$ 
proving
$\Ind_s(\iota_\L(m)) =\hat{\iota}_\L(m).$
\end{proof}
Define a homomorphism 
$s_\L: \Ker(I-A_\L: \Z_I \longrightarrow \Z_I) \longrightarrow \Z$
by setting
$s_\L((n^l)_{l\in \Zp}) = \sum_{i=1}^{m(l)} n_i^l$ 
which is independent of $l\in \Zp.$
\begin{lemma}\label{lem:slambda}
We have a cyclic six-term exact sequence
\begin{equation*}
\begin{CD}
 \Ker(I-A_\L: \Z_{I,0} \longrightarrow \Z_I)
  @>{}>> \Ker(I-A_\L: \Z_I \longrightarrow \Z_I) 
  @>{s_\L}>> \Z \\
@AAA  @.   @VV{\hat{\iota}_\L}V  \\
0 @<<< \Z_I/(I-A_\L)\Z_I  
@<{}<< \Z_I/(I-A_\L)\Z_{I,0}.
\end{CD}
\end{equation*}
\end{lemma}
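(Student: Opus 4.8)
The plan is to realize the asserted cyclic diagram as the output of the snake lemma applied to a single morphism of short exact sequences whose vertical arrows are built from $I - A_\L$. First I would record the short exact sequence of abelian groups
\[
0 \longrightarrow \Z_{I,0} \overset{\iota}{\longrightarrow} \Z_I \overset{s}{\longrightarrow} \Z \longrightarrow 0,
\]
where $\iota$ is the inclusion and $s((n^l)_{l\in\Zp}) = \sum_{i=1}^{m(l)} n_i^l$ is the sum map. Here $s$ is well defined (independent of $l$) because every column of each $I_{l,l+1}$ sums to $1$, a consequence of $\iota_{l,l+1}$ being a single-valued surjection; this is exactly the fact used to define $s_\L$. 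By construction $\Z_{I,0} = \Ker s$, and $s$ is surjective since the constant sequence $(m,0,\dots,0)_{l\in\Zp}$ lies in $\Z_I$ with $s$-value $m$, so the row is exact.

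Next I would assemble the commutative diagram with exact rows
\[
\begin{CD}
0 @>>> \Z_{I,0} @>{\iota}>> \Z_I @>{s}>> \Z @>>> 0 \\
@. @VV{I - A_\L}V @VV{I - A_\L}V @VV{0}V @. \\
0 @>>> \Z_I @= \Z_I @>>> 0 @>>> 0,
\end{CD}
\]
in which the left and middle vertical maps are $I - A_\L$ (legitimate since $(I-A_\L)(\Z_{I,0}) \subset \Z_I$ and $A_\L$ is an endomorphism of $\Z_I$) and the right vertical map is $0$. Commutativity of the left square is automatic because $\iota$ is an inclusion and the bottom-left arrow is the identity; commutativity of the right square holds because both composites landing in the bottom-right $0$ vanish. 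The bottom row is visibly exact.

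Then I would invoke the snake lemma. Writing $f_1 = I - A_\L\colon \Z_{I,0}\to\Z_I$, $f_2 = I - A_\L\colon \Z_I\to\Z_I$, and $f_3 = 0\colon \Z\to 0$, one reads off $\Ker f_1 = \Ker(I-A_\L\colon\Z_{I,0}\to\Z_I)$, $\Coker f_1 = \Z_I/(I-A_\L)\Z_{I,0}$, $\Ker f_2 = \Ker(I-A_\L\colon\Z_I\to\Z_I)$, $\Coker f_2 = \Z_I/(I-A_\L)\Z_I$, $\Ker f_3 = \Z$, and $\Coker f_3 = 0$. The snake lemma then yields the exact sequence
\[
0 \to \Ker f_1 \to \Ker f_2 \overset{s_\L}{\to} \Z \overset{\delta}{\to} \Z_I/(I-A_\L)\Z_{I,0} \to \Z_I/(I-A_\L)\Z_I \to 0,
\]
which is precisely the claimed cyclic six-term diagram once the leading and trailing zeros are merged into the single $0$ corner; in particular exactness at the top-left node encodes injectivity of the inclusion $\Ker f_1 \to \Ker f_2$, and exactness at the bottom-middle node encodes surjectivity of the canonical quotient $\Coker f_1 \to \Coker f_2$.

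Finally I would check that the three named arrows are the intended ones. The induced map $\Ker f_2 \to \Ker f_3 = \Z$ is the restriction of $s$, hence equals $s_\L$, and $\Coker f_1 \to \Coker f_2$ is the natural quotient induced by $(I-A_\L)\Z_{I,0}\subset (I-A_\L)\Z_I$. The one point requiring genuine care is the identification of the connecting map $\delta$ with $\hat{\iota}_\L$: given $m\in\Z$, the snake construction lifts $m$ along $s$ to any $y\in\Z_I$ with $s(y)=m$, applies $I-A_\L$, and takes the class of $(I-A_\L)y$ in $\Z_I/(I-A_\L)\Z_{I,0}$; choosing $y = (m,0,\dots,0)_{l\in\Zp}$ reproduces exactly the definition of $\hat{\iota}_\L(m)$, and the independence of this class from the chosen lift (established just before the lemma) matches the well-definedness of $\delta$. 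I expect this identification of $\delta$ to be the only substantive step; everything else is a direct transcription of the snake lemma.
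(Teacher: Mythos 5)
Your proposal is correct, but it is organized genuinely differently from the paper's proof. The paper never invokes the snake lemma: it treats exactness at five of the six nodes as immediate (the inclusion of kernels is injective, its image is $\Ker(I-A_\L)\cap\Z_{I,0}$, the quotient of cokernels is surjective with kernel $(I-A_\L)\Z_I/(I-A_\L)\Z_{I,0}=\Im(\hat{\iota}_\L)$) and reduces the lemma to the single nontrivial identity $\Ker(\hat{\iota}_\L)=s_\L\bigl(\Ker(I-A_\L\colon\Z_I\to\Z_I)\bigr)$, which it verifies by a direct element chase: given $m\in\Ker(\hat{\iota}_\L)$ with lift $(k^l)_{l\in\Zp}$, it subtracts a witness $(n^l)_{l\in\Zp}\in\Z_{I,0}$ satisfying $(I-A_\L)(k^l)=(I-A_\L)(n^l)$ to produce $(k^l-n^l)\in\Ker(I-A_\L)$ with level sum $m$, and conversely observes $\hat{\iota}_\L(s_\L((n^l)))=(I-A_\L)((n^l))=0$ on the kernel. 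Your snake-lemma packaging proves the same statement: your identification of the connecting map $\delta$ with $\hat{\iota}_\L$ is precisely the paper's well-definedness argument for $\hat{\iota}_\L$ read through the snake construction, and the paper's two-inclusion computation is the standard chase establishing snake exactness at $\Ker f_3$. What your route buys is uniformity --- all six exactness statements, including the four the paper leaves implicit, fall out of one diagram --- at the cost of setting up the morphism of short exact sequences; the paper's route is more elementary and isolates the one point of substance. One small caveat, which you share with the paper (it uses the same shortcut when defining $\hat{\iota}_\L$): the sequence $n^l=(m,0,\dots,0)$ lies in $\Z_I$ only if the vertex indexing happens to satisfy $\iota(v_1^{l+1})=v_1^l$ for all $l$; in general, surjectivity of $s$ should be obtained by choosing a coherent thread of vertices $(u_l)_{l\in\Zp}$ with $\iota(u_{l+1})=u_l$ --- such a thread exists because the maps $\iota_{l,l+1}$ are surjective and each $V_l$ is finite and nonempty --- and taking $n^l$ to be $m$ times the indicator vector at $u_l$; your observation that each column of $I_{l,l+1}$ sums to $1$ then gives both the compatibility and the well-definedness of $s$.
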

\begin{proof}
It suffices to show that
$\Ker(\hat{\iota}_\L) 
=s_\L(\Ker(I-A_\L: \Z_I \longrightarrow \Z_I)).$
For $m \in \Ker(\hat{\iota}_\L)$ in $\Z$,
we have 
$\hat{\iota}_\L(m) = (I - A_\L)[(k^l)_{l\in \Zp}]$ 
where 
 $k^l = (k_i^l)_{i=1}^{m(l)}, m =\sum_{i=1}^{m(l)}k_i^l.$
Since
$\hat{\iota}_\L(m) \in (I-A_\L)\Z_{I,0}$,
one may find $(n^l)_{l\in \Zp} \in \Z_{I,0}$
such that 
$(I - A_\L)[(k^l)_{l\in \Zp}] 
= (I - A_\L)[(n^l)_{l\in \Zp}].$
We then have
$(I - A_\L)[(k^l)_{l\in \Zp}]
- (I - A_\L)[(n^l)_{l\in \Zp}] =0$
so that 
$(k^l - n^l)_{l\in \Zp} \in \Ker(I- A_\L)$
and
$m = \sum_{i=1}^{m(l)} k_i^l = \sum_{i=1}^{m(l)} (k_i^l- n_i^l)$.
This shows that
$$
m = s_\L((k^l - n^l)_{l\in \Zp}) 
\in s_\L(\Ker(I-A_\L: \Z_I \longrightarrow \Z_I)).
$$
Conversely, 
for $(n^l)_{l\in \Zp} \in \Ker(I-A_\L: \Z_I \longrightarrow \Z_I))$,
we have
\begin{equation*}
\hat{\iota}_\L(s_\L((n^l)_{l\in \Zp} ) )
=\hat{\iota}_\L(\sum_{i=1}^{m(l)}n_i^l) = (I-A_\L)((n^l)_{l\in \Zp}) =0.
\hspace{35mm}
\qed
\end{equation*}
\renewcommand{\qed}{}
\end{proof}
Following Higson\cite{Higson} and Higson--Roe \cite{HR}, 
for  a separable unital nuclear $C^*$-algebra $\A$
the reduced $\K$-homology groups 
$\widetilde{\K}^0(\A), \widetilde{\K}^1(\A) $
and the  unreduced $\K$-homology groups 
${\K}^0(\A), {\K}^1(\A)$
are identified with their extension groups such as 
$$
\widetilde{\K}^0(\A)= \Extsz(\A), \quad
\widetilde{\K}^1(\A)= \Ext_s(\A), \quad
{\K}^0(\A)= \Extwz(\A), \quad
{\K}^1(\A)= \Ext_w(\A), \quad
$$
respectively.
The groups 
$\Exts(\A)$ and $\Extw(\A)$ are written
as
$\Extso(\A)$ and $\Extwo(\A),$
respectively.
The isomorphisms
$\Inds: \Exts(\A)\ \longrightarrow \Z_I/(I -A_\L)\Z_{I,0}$
and
$\Indw: \Extw(\A)\ \longrightarrow \Z_I/(I -A_\L)\Z_{I}$
are written as
$\Indso$ and $\Indwo$, respectively.
A general theory of 
$\K$-homology groups for a separable unital nuclear $C^*$-algebr $\A$
says that the following $\K$-homology long exact sequence holds:
\begin{equation}\label{eq:Khomo}
0
\longrightarrow \widetilde{{\K}}^0(\A)
\longrightarrow {\K}^0(\A)
\overset{\iota^*_{\mathbb{C}}}{\longrightarrow}{\K}^0(\mathbb{C})=\Z 
\overset{\iota}{\longrightarrow}  \widetilde{\K}^1(\A)
\longrightarrow \K^1(\A)
\longrightarrow 0.
\end{equation}
By \cite{MaKtheory2001}, 
we have already known that 
\begin{equation}
\Extwo(\OL) =\Z_I/(I -A_\L)\Z_I, \qquad 
\Extwz(\OL) =\Ker(I-A_\L: \Z_I \longrightarrow \Z_I).
\end{equation}
The homomorphism 
$\iota_{\mathbb{C}}^*: \K^0(\OL) \longrightarrow \K^0(\mathbb{C})$
in the middle of \eqref{eq:Khomo} for $\A=\OL$
is defined by the natural unital inclusion map
$\iota_{\mathbb{C}}:\C\hookrightarrow \OL$.
As the number 
$\sum_{i=1}^{m(l+1)} n_i^l$ 
for 
$n^l = (n_i^l)_{i=1}^{m(l)} \in \Ker(I-A_\L: \Z_I \longrightarrow \Z_I) $
does not depend on the choice of
$l \in \Zp,$ 
the homomorphism 
$\iota_{\mathbb{C}}^*: \K^0(\OL) \longrightarrow \K^0(\mathbb{C}) =\Z$
satisfies 
$\iota_{\mathbb{C}}^*( (n^l)_{l\in \Zp}) =  \sum_{i=1}^{m(l+1)} n_i^l$
which does not depend on $l\in \Zp$.
Since
$\widetilde{\K}^0(\OL) 
= \Ker( \iota_{\mathbb{C}}^*: \K^0(\OL) \longrightarrow \K^0(\mathbb{C})),
$
we know that 
\begin{equation}\label{eq:Ktilde1}
\widetilde{\K}^0(\OL) 
=\Ker( I - A_\L: \Z_{I,0}\longrightarrow \Z_I ).
\end{equation}
The cyclic six-term exact sequence \eqref{eq:Khomo} says the following lemma.
 \begin{lemma}\label{lem:longexact}
The following diagram is commutative:
\begin{equation}\label{eq:long6termexacts}
\begin{CD}
 0 @. 0 @. 0  \\
 @VVV   @VVV @VVV  \\
 \widetilde{\K}^0(\OL) @= \Extsz(\OL) @>>> 
 \Ker(I-A_\L: \Z_{I,0} \longrightarrow \Z_I) \\
 @V{}VV   @VV{}V  @VV{}V \\
 \K^0(\OL) @= \Extwz(\OL) @>>> 
 \Ker(I-A_\L: \Z_{I} \longrightarrow \Z_I) \\
 @V{\iota_{\mathbb{C}}^*}VV   @VV{s_\L}V  @VV{s_\L}V \\
 \K^0(\mathbb{C})  @= \Z    @= \Z \\
 @V{\iota^*}VV  @V{\iota^*}VV   @VV{\hat{\iota}_{\L}}V \\
 \widetilde{\K}^1(\OL) @= \Extso(\OL)  @>{\Indso}>>   \Z_I/(I - A_\L)\Z_{I,0}  \\
 @V{}VV @V{}VV   @VV{}V \\
  \K^1(\OL) @= \Extwo(\OL)  @>{\Indwo}>>\Z_I/ (I-A_\L)\Z_I  \\
 @VVV   @VVV @VVV \\
  0 @. 0 @. 0
 \end{CD}
 \end{equation}
 where
 $\hat{\iota}_\L: \Z \longrightarrow \Z_I/)I - A_\L)\Z_{I,0}$
 is defined by $\hat{\iota}_\L(m) = [(I - A_\L)[(n^l)_{l\in \Zp}]]$
 for $ m = \sum_{i=1}^{m(l)} n_i^l, \, l \in \Zp,$
 and
 $s_\L: \Ker(I- A_\L: \Z_I \longrightarrow \Z_I) \longrightarrow \Z$
 is defined by 
 $s_\L((n^l)_{l\in \Zp}) =  \sum_{i=1}^{m(l)} n_i^l$
 for $(n^l)_{l\in \Zp}\in \Ker(I - A_\L:\Z_I\longrightarrow \Z_I).$
\end{lemma}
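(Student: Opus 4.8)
The plan is to read the statement as an isomorphism between two cyclic six-term exact sequences. Under the identifications $\widetilde{\K}^0(\OL)=\Extsz(\OL)$, $\K^0(\OL)=\Extwz(\OL)$, $\widetilde{\K}^1(\OL)=\Extso(\OL)$, $\K^1(\OL)=\Extwo(\OL)$ and $\K^0(\mathbb{C})=\Z$, the middle column of \eqref{eq:long6termexacts} is literally the $\K$-homology long exact sequence \eqref{eq:Khomo} for $\A=\OL$, which is already exact; so the two leftmost columns agree and contribute nothing new. It then remains to produce the horizontal isomorphisms $\Indsz$, $\Indwz$, $\Indso$, $\Indwo$ (together with the identity on the two copies of $\Z$) and to check that the four squares joining the middle column to the right-hand column commute. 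Granting this, exactness of the right-hand column is inherited by transport of structure and matches the cyclic sequence of Lemma \ref{lem:slambda}.

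First I would dispose of the two squares not involving a connecting map. For the top square, \eqref{eq:Ktilde1} identifies $\widetilde{\K}^0(\OL)$ with $\Ker(I-A_\L:\Z_{I,0}\longrightarrow\Z_I)$ precisely as the kernel of $\iota_{\mathbb{C}}^*$ inside $\K^0(\OL)=\Ker(I-A_\L:\Z_I\longrightarrow\Z_I)$; hence $\Indsz$ is simply the restriction of $\Indwz$, and the inclusion $\widetilde{\K}^0(\OL)\to\K^0(\OL)$ corresponds to the inclusion of kernels on the right. For the bottom square, both $\Inds$ and $\Indw$ are computed from the same index class $[\Ind[d(\sigma,\tau)]]\in\Z_I$, the only difference being that the strong invariant is read modulo $(I-A_\L)\Z_{I,0}$ while the weak one is read modulo $(I-A_\L)\Z_I$; passing from strong to weak equivalence therefore intertwines the quotient map $\Exts(\OL)\to\Extw(\OL)$ with the canonical surjection $\Z_I/(I-A_\L)\Z_{I,0}\to\Z_I/(I-A_\L)\Z_I$.

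The two remaining squares carry the connecting maps, and here the earlier lemmas do the work. The square built on $\iota_{\mathbb{C}}^*$ commutes because, under $\Indwz$, one has $\iota_{\mathbb{C}}^*((n^l)_{l\in\Zp})=\sum_{i=1}^{m(l)}n_i^l=s_\L((n^l)_{l\in\Zp})$, the formula recorded just before the statement, so $\iota_{\mathbb{C}}^*$ is exactly $s_\L$. For the square built on the boundary map $\iota^*\colon\K^0(\mathbb{C})=\Z\to\widetilde{\K}^1(\OL)=\Extso(\OL)$, I would first argue that this abstract connecting homomorphism coincides with the concrete generator-counting map $\iota_\L$ of the middle-exact sequence \eqref{eq:ZSW}; once that identification is made, Lemma \ref{lem:29p}, i.e. the commuting square \eqref{eq:CDextsz}, gives precisely $\Indso\circ\iota^*=\hat{\iota}_\L$. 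The hard part will be exactly this last identification $\iota^*=\iota_\L$ of the $\K$-homology boundary map with the explicitly constructed homomorphism $\iota_\L$, since everything else then reduces to results already in hand and the diagram assembles into the asserted isomorphism of six-term sequences.
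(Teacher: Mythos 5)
Your proposal is correct and takes essentially the same route as the paper, which derives the lemma by assembling the $\K$-homology sequence \eqref{eq:Khomo} with the identification \eqref{eq:Ktilde1}, the formula $\iota_{\mathbb{C}}^* = s_\L$ recorded just before the statement, and Lemma \ref{lem:29p}, while the identification of the abstract boundary map $\iota^*$ with the concrete $\iota_\L$ of \eqref{eq:ZSW} --- the step you rightly flag as the crux --- is exactly what the paper takes from \cite{HR} (5.2.10 Proposition) and \cite{PP} without further argument. One caution: at this point $\Indso$ is only known to be a homomorphism (Proposition \ref{prop:5.8}), its isomorphy being deduced \emph{afterwards} from this very diagram by the five lemma, so you should drop the ``transport of structure'' framing (which presupposes isomorphisms and would be circular) and rely, as you in fact do in the square-by-square verification, only on commutativity together with the independent exactness of the right-hand column from Lemma \ref{lem:slambda}.
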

\begin{corollary}
$\Indso: \Exts(\OL) \longrightarrow \Z_I/(I - A_\L)\Z_{I,0}$
is an isomorphism of abelian groups. 
\end{corollary}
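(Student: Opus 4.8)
The plan is to deduce the statement formally from the commutative ladder \eqref{eq:long6termexacts} established in Lemma \ref{lem:longexact} by invoking the Five Lemma. All the genuine work has already been carried out: Proposition \ref{prop:5.8} produces the homomorphism $\Indso = \Inds$, Lemma \ref{lem:longexact} shows that $\Indso$ fits into a commutative diagram intertwining the $\K$-homology long exact sequence \eqref{eq:Khomo} for $\A = \OL$ (the middle column) with the purely algebraic six-term exact sequence of Lemma \ref{lem:slambda} (the right column), and the remaining horizontal comparison maps are already understood. Thus the final step is a diagram chase rather than a new construction, and I would present it as such.

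First I would record exactness of both columns. The middle column is exact because it is the $\K$-homology long exact sequence \eqref{eq:Khomo}, valid for any separable unital nuclear $C^*$-algebra; under the identifications $\widetilde{\K}^0 = \Extsz$, $\K^0 = \Extwz$, $\widetilde{\K}^1 = \Extso$, $\K^1 = \Extwo$ it reads
$$
0 \longrightarrow \Extsz(\OL) \longrightarrow \Extwz(\OL) \longrightarrow \Z \longrightarrow \Extso(\OL) \longrightarrow \Extwo(\OL) \longrightarrow 0 .
$$
The right column is exact by Lemma \ref{lem:slambda}, with corresponding terms $\Ker(I-A_\L:\Z_{I,0}\to\Z_I)$, $\Ker(I-A_\L:\Z_I\to\Z_I)$, $\Z$, $\Z_I/(I-A_\L)\Z_{I,0}$, $\Z_I/(I-A_\L)\Z_I$, bounded by zeros at both ends.

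Next I would identify which horizontal maps are already isomorphisms. By the computation recalled just before \eqref{eq:Ktilde1} from \cite{MaKtheory2001}, the maps $\Indwz \colon \Extwz(\OL)\to\Ker(I-A_\L:\Z_I\to\Z_I)$ and $\Indwo \colon \Extwo(\OL)\to\Z_I/(I-A_\L)\Z_I$ are isomorphisms; the comparison map at $\K^0(\mathbb{C})=\Z$ is the identity; and the map at the terminal zeros is trivially an isomorphism. Restricting attention to the five consecutive terms $\Extwz(\OL)\to\Z\to\Extso(\OL)\to\Extwo(\OL)\to 0$ centered at $\Extso(\OL)$, the Five Lemma applies: the two maps to the left of $\Indso$ (namely $\Indwz$ and the identity on $\Z$) are isomorphisms, hence epi and mono respectively, and the two maps to the right ($\Indwo$ and the map into $0$) are an isomorphism and a monomorphism respectively. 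It follows that $\Indso$ is both injective and surjective, hence an isomorphism.

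Since everything reduces to the Five Lemma, there is no substantial obstacle; the only points requiring care are lining up the epi/mono hypotheses with the correct positions in the ladder and verifying exactness precisely at the three spots surrounding $\Extso(\OL)$ in both columns, so that the five-term window used is genuinely valid. As a consistency check I would remark that the same argument applied to the opposite end of the ladder re-derives that $\Indsz$ is an isomorphism, in agreement with the identification \eqref{eq:Ktilde1}.
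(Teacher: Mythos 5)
Your proposal is correct and follows essentially the same route as the paper: both deduce the statement from the commutative ladder of Lemma \ref{lem:longexact} together with the isomorphisms $\Indwz$, $\Indwo$ from \cite{MaKtheory2001}, and then invoke the five lemma. The only (immaterial) difference is packaging: the paper first compresses the ladder into a commutative diagram of short exact sequences with outer terms $\Z/s_\L(\Extwz(\OL)) \to \Z/s_\L(\Ker(I-A_\L))$ before applying the five lemma, whereas you apply it directly to the five-term window $\Extwz(\OL)\to\Z\to\Extso(\OL)\to\Extwo(\OL)\to 0$, which is the same diagram chase.
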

\begin{proof}
By the commutative diagram \eqref{eq:long6termexacts},
we have a commutative diagram of short exact sequences:
\begin{equation*}
\begin{CD}
0 @>>> \Z/ s_\L(\Extwz(\OL)) @>{\iota}>> \Extso(\OL) @>>> \Extwo(\OL) @>>>0 \\
@.      @VV{}V   @V{\Indso}VV    @V{\Indwo}VV @. \\ 
0 @>>>  \Z/s_\L(\Ker(I - A_\L)) @>>> \Z_I/(I - A_\L)\Z_{I,0}
  @>>> \Z_I/(I - A_\L)\Z_I @>>>0. 
\end{CD}
\end{equation*}
Since the two vertical arrows 
$\Indwo: \Extwo(\OL) \longrightarrow \Z_I/(I - A_\L)\Z_I$ 
and 
$\Z/ s_\L(\Extwz(\OL)) \longrightarrow \Z/s_\L(\Ker(I - A_\L))$
are isomorphisms,
the five lemma says that the middle homomorphism
$\Indso: \Extso(\OL) \longrightarrow \Z_I/(I - A_\L)\Z_{I,0}$ 
is isomorphic.
 \end{proof}
We therefore obtain the following theorem.
\begin{theorem}[{Theorem \ref{thm:main1}}]\label{thm:main11}
Let $\L$ be a left-resolving $\lambda$-graph system over $\Sigma$.
There exist isomorphisms
\begin{align*}
\Indwo: \Extwo(\OL) & \longrightarrow \Z_I/(I - A_\L)\Z_I,\\
\Indso: \Extso(\OL) & \longrightarrow \Z_I/(I - A_\L)\Z_{I,0},\\
\Indwz: \Extwz(\OL) & \longrightarrow \Ker(I - A_\L:\Z_I\longrightarrow \Z_I),\\
\Indsz: \Extsz(\OL) & \longrightarrow \Ker(I - A_\L: \Z_{I,0}\longrightarrow \Z_I)
\end{align*} 
of abelian groups such that 
the $\K$-homology long exact sequence 
\eqref{eq:Khomo} 
is computed to be
the cyclic six-term exact sequence
\begin{equation*}
\begin{CD}
\Ker(I - A_\L: \Z_{I,0}\longrightarrow \Z_I) 
@>{}>> \Ker(I - A_\L: \Z_I\longrightarrow \Z_I) 
@>>> \Z \\
@AAA  @.   @VV{}V  \\
0 @<<< \Z_I/(I - A_\L)\Z_I  @<{}<< \Z_I/(I - A_\L)\Z_{I,0}. 
\end{CD}
\end{equation*}
\end{theorem}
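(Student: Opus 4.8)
The plan is to assemble the four isomorphisms from the pieces built above and then to check that they intertwine the $\K$-homology long exact sequence \eqref{eq:Khomo} with the purely algebraic cyclic six-term sequence of Lemma \ref{lem:slambda}. The two weak identifications are already in hand: by \cite{MaKtheory2001} one has $\Extwo(\OL) = \Z_I/(I-A_\L)\Z_I$ and $\Extwz(\OL) = \Ker(I-A_\L:\Z_I\to\Z_I)$, so $\Indwo$ and $\Indwz$ are isomorphisms. For the reduced group I would use the identification $\widetilde{\K}^0(\OL) = \Ker(\iota_{\mathbb{C}}^*)$ together with the observation, recorded just before Lemma \ref{lem:longexact}, that under $\Indwz$ the map $\iota_{\mathbb{C}}^*:\K^0(\OL)\to\K^0(\mathbb{C})=\Z$ becomes the summation map $s_\L$. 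Since $s_\L$ sends $(n^l)_{l\in\Zp}$ to $\sum_{i=1}^{m(l)} n_i^l$, its kernel is exactly $\Ker(I-A_\L:\Z_{I,0}\to\Z_I)$; this is \eqref{eq:Ktilde1} and yields the isomorphism $\Indsz$.

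For the strong group $\Extso(\OL)$ I would invoke the homomorphism $\Inds$ of Proposition \ref{prop:5.8}, built from the Fredholm index of the unitary $U_{\sigma,\ttau}$, together with the commuting square \eqref{eq:CDextsz} of Lemma \ref{lem:29p} that relates $\iota_\L$ to its algebraic model $\hat{\iota}_\L$. The decisive step is the five-lemma argument carried out in the Corollary just above: the exact sequence $\Z\to\Exts(\OL)\to\Extw(\OL)$ of \eqref{eq:ZSW} fits into a morphism of short exact sequences whose two outer vertical arrows — the induced isomorphism on $\Z/s_\L(\Extwz(\OL))$ and $\Indwo$ — are isomorphisms, so the five lemma forces the middle arrow $\Indso$ to be an isomorphism as well.

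It then remains to verify the compatibility with \eqref{eq:Khomo}, which is precisely the content of the commutative diagram \eqref{eq:long6termexacts} of Lemma \ref{lem:longexact}. Its three columns are the four identifications above (matched against $\widetilde{\K}^0,\K^0,\widetilde{\K}^1,\K^1$), its rows reproduce the long exact sequence \eqref{eq:Khomo}, and the connecting homomorphisms $\iota_{\mathbb{C}}^*$ and $\iota^*$ are transported to $s_\L$ and $\hat{\iota}_\L$. Reading this diagram against Lemma \ref{lem:slambda} collapses \eqref{eq:Khomo} onto the stated six-term sequence, which finishes the proof.

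I expect the genuine obstacle to sit in this last identification, namely that the boundary map $\iota^*:\K^0(\mathbb{C})\to\widetilde{\K}^1(\OL)$ is carried to $\hat{\iota}_\L$ under $\Indso$. This rests on the index computation of Lemma \ref{lem:29p}, where a unitary $u_m$ of Fredholm index $m$ is conjugated through $\phi_{\ttau}$ and the operator $I-A_\L$ emerges from the structure relation $S_\alpha^* E_i^l S_\alpha = \sum_{j} A_{l,l+1}(i,\alpha,j)E_j^{l+1}$. Once that square is known to commute, the remaining diagram chase is purely formal.
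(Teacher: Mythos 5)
Your proposal is correct and takes essentially the same route as the paper's own proof: it assembles the weak identifications from \cite{MaKtheory2001}, the identification \eqref{eq:Ktilde1} of $\Extsz(\OL)$ with $\Ker(I - A_\L: \Z_{I,0}\longrightarrow \Z_I)$ via $s_\L$, the five-lemma argument of the corollary for $\Indso$, and the commutative diagram \eqref{eq:long6termexacts} together with Lemma \ref{lem:slambda} to transport \eqref{eq:Khomo} onto the algebraic six-term sequence. You also correctly locate the essential computational input in Lemma \ref{lem:29p}, which is exactly where the paper matches the boundary map with $\hat{\iota}_\L$.
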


\section{Examples}
\subsection{Markov coded systems}
Let $G = (V, E)$ be a finite directed graph with 
vertex set $V = \{v_1, \dots, v_N\}$ and edge set 
$E = \{e_1, \dots, e_{N_1}\}.$
We assume that the graph $G$ is essential,
which means that every vertex has at least one incoming edges 
and at least one outgoing edges.
Let $b, c $ be two letters.
Consider the set
\begin{equation}
\calC_G := \{ \overbrace{b\cdots b}^{n} \overbrace{c\cdots c}^{m} e_k \mid
k=1,\dots, N_1, \, n\le m, \, n, m \in \N\}  
\end{equation}
which is called a code for $G$.
Define the map $r: \calC_G \rightarrow E$ by 
$r(b\cdots b c\cdots c e_k) =e_k$.
Put $\Sigma = \{b, c, e_n \mid n=1,\dots,N_1\}.$ 
Let $\Omega_{(\calC_G, r)}$
be a shift-invariant set defined by setting 
\begin{align*}
\Omega_{(C_G,r)}
:= & \{ (\omega_i)_{i \in \Z} \in \Sigma^\Z \mid
\text{there exist } \cdots < k_{-1} < k_0 < k_1 <\cdots \text{ in } \Z;\\
& \hspace{2cm} \omega_{[k_i, k_{i+1})} \in \calC_G,  
t(r(\omega_{[k_i, k_{i+1})})) = s(r(\omega_{[k_i, k_{i+1})})), i \in \Z\}
\end{align*}
where $\omega_{[k_i, k_{i+1})} = \omega_{k_i}\cdots \omega_{k_{i+1}-1}$
and
$t(e), s(e)$ for an edge $e\in E$ denote the target vertex and the source vertex,
respectively.  
The set $\Omega_{(C_G, r)}$ is shift-invariant but not necessarily closed  
in $\Sigma^\Z$.
The closure 
$\overline{\Omega_{(C_G, r)}}$ is a shift space of a subshift.
The subshift is called the Markov coded system and written $S_G$
(\cite{MaActaSci2014}).  
It is a normal subshift in the sense of \cite{MaDocMath2023}
and not any of  topological Markov shifts for every finite directed graph $G$.
There is a $\lambda$-graph system written   
$\L^{S_G}$ canonically constructed from the Markov coded system $S_G$.
It presents the subshift $S_G$ and is minimal in the sense of 
\cite{MaDocMath2023}.
The $C^*$-algebra ${\mathcal{O}}_{\L^{S_G}}$
associated with the $\lambda$-graph system $\L^{S_G}$
is written as $\OSG$ in \cite{MaActaSci2014}.
It is shown in \cite{MaActaSci2014} that the algebra
$\OSG$ is simple purely infinite if the transition matrix $A$
of the directed graph $G$ is aperiodic, 
and the $\K$-theory groups and the weak extension groups are
such as 
\begin{gather*}
\K_0(\OSG)\cong  \Z^N/A\Z^N \oplus \Z^N, \qquad
\K_1(\OSG) \cong  \Ker(A) \text{ in } \Z^N, \\
\Extwz(\OSG)  \cong  (\Ker(A) \text{ in } \Z^N) \oplus \Z^N, \quad 
\Extwo(\OSG) \cong \Z^N/A\Z^N.
\end{gather*}
We note that 
in \cite{MaActaSci2014}
 the weak extension groups 
$\Extwo(\OSG), \Extwz(\OSG)
$
are written as $\operatorname{Ext}^0(\OSG),\operatorname{Ext}^1(\OSG),$
respectively.
In what follows, 
we will compute the strong extension groups 
$
\Extso(\OSG), \Extsz(\OSG).
$

Let us denote by $1_N$ and $0_N$ the identity matrix of size $N$
and $0$ matrix of size $N$.  
The canonical $\lambda$-graph system $\L^{S_G}$ and its 
transition matrices 
$(A_{l,l+1}^{\L^{S_G}}, I_{l,l+1}^{\L^{S_G}})$,
written as 
$(M_{l,l+1}, I_{l,l+1})$ in \cite{MaActaSci2014}
for $\L^{S_G}$ was concretely computed such as 
\begin{equation*}   
M_{l,l+1}
=\addtocounter{MaxMatrixCols}{1}
\begin{bmatrix}
1_N    &0_N   &\cdots &\cdots&\cdots&\cdots &\cdots &0_N    & 0_N  & 1_N \\
0_N    &1_N   &\ddots &      &      &       &\iddots&0_N    & 1_N  & 0_N  \\ 
\vdots &\ddots&\ddots &\ddots&      &\iddots&\iddots&\iddots&\iddots& \vdots \\
0_N    &\cdots&0_N    &1_N   & 0_N  & 0_N   &1_N    &0_N    &\cdots& 0_N\\
A^t    &\cdots&\cdots & A^t  & A^t  & 0_N   &0_N    &\cdots & 0_N  & 0_N \\
0_N    &\cdots&\cdots & 0_N  & 0_N  & 1_N   &\ddots & \ddots&\vdots&\vdots\\
\vdots &      &       &\vdots&\vdots&\ddots&\ddots&0_N   &0_N&0_N\\
0_N    &\cdots&\cdots & 0_N  & 0_N  & \cdots&0_N    & 1_N   & 1_N& 1_N
\end{bmatrix}
\end{equation*}
\begin{equation*}   
I_{l,l+1}
=\addtocounter{MaxMatrixCols}{1}
\begin{bmatrix}
1_N    &1_N   &0_N    &\cdots&\cdots&\cdots &\cdots &\cdots &0_N   & 0_N \\
0_N    &0_N   &1_N    &\ddots&      &       &       &       &\vdots&\vdots\\ 
\vdots &\ddots&\ddots &\ddots&\ddots&       &       &       &\vdots& \vdots \\
\vdots &      &\ddots &\ddots&\ddots&\ddots &       &       &\vdots&\vdots \\
\vdots &      &       &\ddots&\ddots&\ddots &\ddots &       &\vdots&\vdots \\
\vdots &      &       &      &\ddots&\ddots &\ddots & \ddots&\vdots&\vdots\\
\vdots &      &       &      &      &\ddots &0_N    & 1_N   &0_N   &0_N\\
0_N    &\cdots&\cdots &\cdots&\cdots& \cdots&0_N    & 0_N   & 1_N  & 1_N
\end{bmatrix}
\end{equation*}
for $3\le l \in \N$,
where both $M_{l,l+1}, I_{l,l+1}$ are 
$2(l+1) \times 2(l+2)$-block matrices whose entries are $N\times N$-matrices,
so they are
$m(l) \times m(l+1)$ matrix with
$m(l) = 2(l+1)N$.
Hence we have
\begin{equation*}   
M_{l,l+1}-I_{l,l+1}
=\addtocounter{MaxMatrixCols}{1}
\begin{bmatrix}
0_N    &-1_N  &\cdots &\cdots&\cdots&\cdots &\cdots &0_N    & 0_N  & 1_N \\
0_N    &1_N   &\ddots &      &      &       &\iddots&0_N    & 1_N  & 0_N  \\ 
\vdots &\ddots&\ddots &\ddots&      &\iddots&\iddots&\iddots&\iddots& \vdots \\
0_N    &\cdots&0_N    &1_N   &-1_N  & 0_N   &1_N    &0_N    &\cdots& 0_N\\
A^t    &\cdots&\cdots & A^t  & A^t  &-1_N   &0_N    &\cdots & 0_N  & 0_N \\
0_N    &\cdots&\cdots & 0_N  & 0_N  & 1_N   &\ddots & \ddots&\vdots&\vdots\\
\vdots &      &       &\vdots&\vdots&\ddots&\ddots  &-1_N   &0_N   &0_N\\
0_N    &\cdots&\cdots & 0_N  & 0_N  & \cdots&0_N    & 1_N   & 0_N  & 0_N
\end{bmatrix}.
\end{equation*}
\begin{lemma}
The homomorphism
$s_{\L^{S_G}}: \Ker(I - A_{\L^{S_G}}: \Z_I \rightarrow \Z_I)\rightarrow \Z$
in the upper right horizontal arrow in Lemma \ref{lem:slambda} for $\L = \L^{S_G}$
is surjective.
\end{lemma}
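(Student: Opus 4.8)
The plan is to exhibit a single element $\xi \in \Ker(I - A_{\L^{S_G}}\colon \Z_I \to \Z_I)$ with $s_{\L^{S_G}}(\xi) = 1$. Since $s_{\L^{S_G}}$ is a group homomorphism, its image is a subgroup $d\Z \subseteq \Z$, so producing one element of coordinate-sum $1$ forces $d = 1$, i.e. surjectivity. It is worth keeping in mind the $\K$-homological meaning: under the isomorphism $\Indwz$ the map $s_{\L^{S_G}}$ realizes $\iota_{\mathbb{C}}^*\colon \K^0(\OSG) \to \K^0(\mathbb{C}) = \Z$, so surjectivity amounts to the existence of a Fredholm module over $\OSG$ whose index on the unit is $1$. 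This also gives a consistency check through Lemma \ref{lem:slambda}: $s_{\L^{S_G}}$ onto is exactly what makes $\hat{\iota}_{\L^{S_G}} = 0$ and collapses the short exact sequence into $\Extso(\OSG) \cong \Extwo(\OSG)$, matching the asserted computation $\Z^N/A\Z^N$.

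Concretely, writing $\xi = (x_l)_{l \in \Zp}$ with each $x_l$ a block vector $(y_1^l, \ldots, y_{2(l+1)}^l)$, $y_k^l \in \Z^N$, I must arrange three conditions simultaneously:
$$ I_{l,l+1} x_{l+1} = x_l, \qquad (M_{l,l+1} - I_{l,l+1}) x_{l+1} = 0, \qquad \sum_{i} (x_l)_i = 1 \quad (l \in \Zp). $$
The first places $\xi$ in $\Z_I$; the second is membership in $\Ker(I - A_{\L^{S_G}})$, recalling $A_{l,l+1}^{\L^{S_G}} = M_{l,l+1}$; the third is $s_{\L^{S_G}}(\xi) = 1$. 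I would read the block equations off the displayed band form of $M_{l,l+1} - I_{l,l+1}$: the bottom rows force certain tail blocks $y_k^{l+1}$ to vanish, the top "diagonal/anti-diagonal" rows identify pairs of blocks, and the single row carrying $A^t$ couples blocks through the graph matrix. The free blocks I would then set to a fixed vector $w \in \Z^N$ with $\mathbf{1}\cdot w = 1$, supported so that, after the forced vanishings and identifications, exactly one surviving block contributes $1$ to the total sum.

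The delicate point is coherence of these choices across all levels, i.e. solving the system inside the projective limit rather than merely level by level; here the intertwining relation $I_{l,l+1} A_{l+1,l+2}^{\L^{S_G}} = A_{l,l+1}^{\L^{S_G}} I_{l+1,l+2}$ of \eqref{eq:commuteIA} guarantees that a kernel vector chosen at level $l+1$ pushes down correctly under $I_{l,l+1}$, so a compatible family can be assembled inductively. The main obstacle I anticipate is the $A^t$-row, which reads $y_{k+1} = A^t(y_1 + \cdots + y_k)$ for the relevant blocks: because $A$ is not invertible this cannot be freely back-solved, so I would place the block carrying the nonzero total sum among the blocks decoupled from the $A^t$-row, while choosing the blocks entering that row so the coupling holds (for instance forcing them into $\Ker A^t$ or making them cancel). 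Finally I would confirm $\mathbf{1}\cdot x_l = 1$ for every $l$, which is consistent across levels by the column-sum identity $\mathbf{1}\, I_{l,l+1} = \mathbf{1}$. Exhibiting such a $\xi$ then proves $s_{\L^{S_G}}$ surjective.
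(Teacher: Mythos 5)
Your overall strategy---exhibit a levelwise kernel vector of the band matrix $M_{l,l+1}-I_{l,l+1}$ with coordinate sum $1$, reading the constraints off the block rows---is exactly the paper's approach. The paper fixes $l=3$ and computes that $[x_i]_{i=1}^{10}\in\Ker(M_{3,4}-I_{3,4})$ if and only if $A^t(x_1+x_2+3x_3)=0$, $x_6=x_7=x_8=0$, $x_4=x_5=x_3$, $x_9=x_3-x_2$, $x_{10}=x_2$; the total coordinate sum is then $\mathbf{1}\cdot(x_1+x_2+4x_3)$, which takes every integer value (take $x_2=0$, $x_1=-3x_3$ and $\mathbf{1}\cdot x_3=1$). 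Your qualitative reading of the rows (forced vanishing of tail blocks, pairwise identifications, one $A^t$-coupled row) matches this precisely, but note that your anticipated obstacle at the $A^t$-row dissolves: invertibility of $A$ never enters, since one does not back-solve that row but simply forces the coupled combination $x_1+x_2+3x_3$ to vanish; no appeal to $\Ker A^t$ or to cancellation tricks is needed.

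The genuine gap is in your coherence step. The intertwining relation \eqref{eq:commuteIA} yields only that $I_{l,l+1}$ maps $\Ker(M_{l+1,l+2}-I_{l+1,l+2})$ into $\Ker(M_{l,l+1}-I_{l,l+1})$, i.e.\ \emph{downward} functoriality of the levelwise kernels. An element of $\Ker(I-A_{\L^{S_G}}\colon\Z_I\to\Z_I)$ is a thread through the inverse system, and to build one you must lift \emph{upward}: at each stage you need a kernel vector at level $l+1$ lying over the vector already chosen at level $l$, and an inverse limit of nonempty countable sets can be empty when the connecting maps fail to be surjective. So ``pushes down correctly'' does not by itself assemble a compatible family; you would need either surjectivity of the connecting maps restricted to the relevant solution sets, or an explicit closed-form sequence $(x_l)_{l\in\Zp}$ verified to satisfy both $I_{l,l+1}x_{l+1}=x_l$ and the kernel equations at every level. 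To be fair, the paper's own proof is equally terse here---it records only the $l=3$ computation and implicitly relies on the identification $\Ker(I-A_{\L^{S_G}})\cong(\Ker A \text{ in } \Z^N)\oplus\Z^N$ from \cite{MaActaSci2014}, where the coherent sequences are described---but your write-up asserts that the missing step follows from \eqref{eq:commuteIA}, and it does not; as it stands this is the one step of your plan that would fail rather than merely remain unexecuted.
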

\begin{proof}
Since the map 
$s_{\L^{S_G}}: \Ker(I - A_{\L^{S_G}}: \Z_I \rightarrow \Z_I)\rightarrow \Z$
is defined by 
$s_{\L^{S_G}}((n^l)_{l \in \Zp}) = \sum_{i=1}^{m(l)}$
that is independent of $l \in \Zp$, 
we choose $l =3$
and consider 
the kernel $\Ker(M_{3,4} -I_{3,4})$.
The matrix $M_{3,4} -I_{3,4}$ is of the form:
\begin{equation*}   
M_{3,4}-I_{3,4}
=\addtocounter{MaxMatrixCols}{1}
\begin{bmatrix}
0_N    &-1_N  &0_N    &0_N   & 0_N  & 0_N   & 0_N   &0_N    & 0_N  & 1_N \\
0_N    &1_N   &-1_N   &0_N   & 0_N  & 0_N   & 0_N   &0_N    & 1_N  & 0_N  \\ 
0_N    &0_N   &1_N    &-1_N  & 0_N  & 0_N   & 0_N   &1_N    &0_N   & 0_N   \\
0_N    &0_N   &0_N    &1_N   &-1_N  & 0_N   &1_N    &0_N    &0_N   & 0_N\\
A^t    &A^t   &A^t    & A^t  & A^t  &-1_N   &0_N    &0_N    & 0_N  & 0_N \\
0_N    &0_N   &0_N    & 0_N  & 0_N  & 1_N   &-1_N   &0_N    &0_N   &0_N   \\
0_N    &0_N   &0_N    & 0_N  &0_N   &0_N    & 1_N   &-1_N   &0_N   &0_N\\
0_N    &0_N   &0_N    & 0_N  & 0_N  &0_N    &0_N    & 1_N   & 0_N  & 0_N
\end{bmatrix}.
\end{equation*}
It is easy to see that $[x_i]_{i=1}^{10}$ with $x_i \in \Z^N, i=1,\dots, 10$
belongs to
$\Ker(M_{3,4} -I_{3,4})$
if and only if 
\begin{gather*}
A^t(x_1 + x_2 + 3x_3) = 0, \\
x_6 = x_7 = x_8 = 0, \quad
x_4 = x_5 = x_3, \quad
x_9 = x_3 -x_2,\quad
x_{10} = x_2.
\end{gather*}
Hence  
the map $s_{\L^{S_G}}((n^l)_{l \in \Zp}) = \sum_{i=1}^{m(l)}$ is surjective.
\end{proof}
Since the cyclic six-term exact sequence \eqref{eq:Khom6}
is rephrased by \eqref{eq:6termL},
the upper right horizontal arrow
$\Extwz(\OSG) 
\longrightarrow \Z$
 in \eqref{eq:Khom6}
is surjective, so that we  have the exact sequences:
\begin{gather*}
0 \longrightarrow \Extsz(\OSG) 
\longrightarrow \Extwz(\OSG) 
\longrightarrow \Z \longrightarrow 0, \\
0 \longrightarrow \Extso(\OSG) 
\longrightarrow \Extwo(\OSG) 
\longrightarrow 0
\end{gather*}
which show that 
\begin{equation*}
\Extsz(\OSG) \oplus \Z  \cong \Extwz(\OSG), \qquad
\Extso(\OSG) \cong \Extwo(\OSG).
\end{equation*}
Since we know that 
$\Extwz(\OSG) \cong \Z^N \oplus (\Ker(A)\text{ in } \Z^N)$
by \cite{MaActaSci2014},
we have the strong extension groups $\Extsi(\OSG), i= 0,1$ in the following way.
\begin{proposition}\label{prop:MarkovcodeExts}
Let $G$ be an essential finite directed graph.
Suppose that its transition matrix $A$ of $G$ is aperiodic. 
Let $\OSG$ be the simple purely infinite $C^*$-algebra of the Markov coded system for $G$.
Then we have
\begin{equation*}
\Extsz(\OSG) \cong \Z^{N-1} \oplus (\Ker(A) \text{ in } \Z^N),
\qquad
\Extso(\OSG) \cong \Z^N/ A\Z^N.
\end{equation*}
\end{proposition}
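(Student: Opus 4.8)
The plan is to read off both groups from the two short exact sequences that the surjectivity of $s_{\L^{S_G}}$, just established, produces out of the six-term sequence \eqref{eq:6termL} of Theorem \ref{thm:main11}. Surjectivity of $s_{\L^{S_G}}$ forces the connecting homomorphism $\hat{\iota}_{\L^{S_G}}$ to vanish, so exactness at $\Extso(\OSG)$ and at $\Extwo(\OSG)$ collapses the lower row to
\begin{equation*}
0 \longrightarrow \Extso(\OSG) \longrightarrow \Extwo(\OSG) \longrightarrow 0,
\end{equation*}
while the upper row becomes $0 \to \Extsz(\OSG) \to \Extwz(\OSG) \overset{s_{\L^{S_G}}}{\to} \Z \to 0$. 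The first of these gives at once $\Extso(\OSG) \cong \Extwo(\OSG) \cong \Z^N/A\Z^N$, using the weak computation of \cite{MaActaSci2014}, and settles the odd part with no further argument.

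For the even part I would exploit that the right-hand term of the second sequence is $\Z$, which is free and hence projective, so the sequence splits: $\Extwz(\OSG) \cong \Extsz(\OSG) \oplus \Z$. Substituting the known value $\Extwz(\OSG) \cong \Z^N \oplus (\Ker(A)\text{ in }\Z^N)$ from \cite{MaActaSci2014} turns this into
\begin{equation*}
\Extsz(\OSG) \oplus \Z \cong \Z^N \oplus (\Ker(A)\text{ in }\Z^N).
\end{equation*}

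It then remains to cancel the free summand $\Z$, and for this I would first record that $\Extsz(\OSG)$ is free abelian of finite rank: it embeds, via the first exact sequence, into $\Extwz(\OSG)$, which is itself finitely generated free, and a subgroup of a finitely generated free abelian group is again finitely generated free. Writing $r$ for the rank of the free group $\Ker(A)$ in $\Z^N$, a rank count on the displayed isomorphism shows that $\Extsz(\OSG)$ has rank $N+r-1$, and cancellation of $\Z$ among finite-rank free abelian groups yields $\Extsz(\OSG) \cong \Z^{N-1} \oplus \Z^r \cong \Z^{N-1} \oplus (\Ker(A)\text{ in }\Z^N)$, as asserted. I do not expect a genuine obstacle here: everything reduces to the surjectivity input from the preceding lemma together with the splitting of a short exact sequence with free quotient, and the only point needing any care at all is this final free-abelian cancellation, which is legitimate precisely because all the groups involved are finitely generated and free.
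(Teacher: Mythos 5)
Your proposal is correct and takes essentially the same route as the paper: the surjectivity of $s_{\L^{S_G}}$ collapses the six-term sequence \eqref{eq:6termL} into the same two short exact sequences, the even one splits because the quotient $\Z$ is free, and substituting the weak groups from \cite{MaActaSci2014} gives both claims. Your explicit final step---that $\Extsz(\OSG)$ is finitely generated free (being a subgroup of $\Extwz(\OSG)\cong \Z^N\oplus(\Ker(A)\text{ in }\Z^N)$, itself free of finite rank since $\Ker(A)$ in $\Z^N$ is), so the summand $\Z$ may be cancelled by a rank count---is exactly the cancellation the paper performs implicitly when passing from $\Extsz(\OSG)\oplus\Z\cong\Z^N\oplus(\Ker(A)\text{ in }\Z^N)$ to the stated conclusion.
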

\subsection{Dyck shifts}
We will compute the extension groups
for the $C^*$-algebra $\ODNmin$
associated to the minimal presentation $\LDNmin$
of the 
Dyck shift $D_N$ for $N \ge 2$.
Let 
$\Sigma^- =\{\alpha_1,\dots,\alpha_N \}$
and 
$\Sigma^+ =\{\beta_1,\dots,\beta_N \}$
be two kinds of finite sets.
Put
$\Sigma = \Sigma^+ \cup \Sigma^-$.
The Dyck shift 
$D_N$ is defined to be the subshift over 
$\Sigma$ in the following way.
Equip the set of finite words of $\Sigma$ with a monoid structure by
\begin{equation}\label{eq:Dyckmonoid}
\alpha_i \beta_j =
\begin{cases}
{\bf 1} & \text{ if } i=j, \\
0 & \text{ if } i\ne j.
\end{cases} 
\end{equation}
Let ${\frak F}_N$ be the set of finite words 
$(\gamma_1,\dots, \gamma_n)$ of $\Sigma$ 
such that
the poduct 
$\gamma_1\cdots\gamma_n$
 is zero in the monoid. 
The Dyck shift 
$D_N$ is defined by the subshift over $\Sigma$ whose fobiddern words are 
${\frak F}_N$. 
This means that 
$D_N$ is the set of bi-infinite sequences $(\gamma_n)_{n\in \Z}$
 of $\Sigma$ such that 
 $(\gamma_n, \dots, \gamma_{n+1}, \dots, \gamma_{n+k})$
does not belong to
${\frak F}_N$ for all $n \in \Z$ and $k \in \N$.
The subshift has a unique minimal presentation of $\lambda$-graph system
called the Cantor horizon $\lambda$-graph system  written
${\frak L}_{D_N}^{\operatorname{Ch}}$ (\cite{KMDocMath2003}).
In this paper, 
we call it the minimal presentation and write it as 
$\LDNmin =(V^\min, E^\min, \lambda^\min, \iota^\min)$. 
It is constructed as in the following way.
The vertex set $V_l^\min$ is 
\begin{equation*}
V_l^\min =\{ (\beta_{\nu_1},\dots, \beta_{\nu_l}) \in (\Sigma^+)^l 
\mid (\nu_1,\dots,\nu_l) \in \{1,\dots,N\}^l\}. 
\end{equation*}
A labeled edge labeled $\beta_j$
 is defined as a directed edge from the vertex 
$(\beta_j,\beta_{\nu_1},\dots, \beta_{\nu_{l-1}}) \in V_l^\min$
to the vertex  
$(\beta_{\nu_1},\dots, \beta_{\nu_l},\beta_{\nu_{l+1}}) \in V_{l+1}^\min$.
A labeled edge labeled $\alpha_j$
 is defined as a directed edge from the vertex 
$(\beta_{\nu_1},\dots, \beta_{\nu_{l}}) \in V_l^\min$
to the vertex 
$(\beta_{\nu_0}, \beta_{\nu_1},\dots, \beta_{\nu_l}) \in V_{l+1}^\min$
if and only if  $j=\nu_0$.
The set of such edges are denotd by $E^\min_{l,l+1}$.
The map $\iota: V_{l+1}^\min \longrightarrow  V_{l}^\min$
is defined by 
$\iota(\beta_{\nu_1},\dots, \beta_{\nu_l},\beta_{\nu_{l+1}})
=(\beta_{\nu_1},\dots, \beta_{\nu_l}).$
We then have a $\lambda$-graph system $\L_{D_N}^\min$.
It is irreducible and locally contracting in the sense of \cite{MaJMAA2021}
so that the $C^*$-algebra 
$\mathcal{O}_{\LDNmin}$ 
is a unital separable nuclear simple purely infinite
$C^*$-algebra.
It is written as $\ODNmin$.
The $\K$-groups were computed as 
\begin{equation*}
\K_0(\mathcal{O}_{{D_N}^\min}) = \Z/N\Z \oplus C(\calC,\Z), \,
\qquad \K_1(\mathcal{O}_{{D_N}^\min}) = 0
\end{equation*}
in \cite{KMDocMath2003},
where $C(\calC, \Z)$ denotes the abelian group of integer valued continuous functions on a Cantor set
$\calC$.
By the universal coefficient theorem
\begin{gather*}
0 \longrightarrow \Ext_\Z^1(\K_0(\A),\Z) 
\longrightarrow \Extwo(\A)
\longrightarrow \Hom_\Z(\K_1(\A),\Z)
\longrightarrow 0, \\
0 \longrightarrow \Ext_\Z^1(\K_1(\A),\Z) 
\longrightarrow \Extwz(\A)
\longrightarrow \Hom_\Z(\K_0(\A),\Z)
\longrightarrow 0
\end{gather*}
for a separable unital nuclear $C^*$-algebra $\A$ proved by L. Brown \cite{Brown84},
 we know the following proposition. 
\begin{proposition}
$\Extwo(\ODNmin) = \Z/N\Z,\qquad  
\Extwz(\ODNmin) = \Hom_\Z(C(\calC, \Z),\Z).
$
\end{proposition}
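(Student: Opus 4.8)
The plan is to feed the two universal coefficient sequences just recalled with $\A = \ODNmin$ and read both extension groups off the known $\K$-groups $\K_0(\ODNmin) = \Z/N\Z \oplus C(\calC,\Z)$ and $\K_1(\ODNmin) = 0$. Since everything then reduces to homological algebra of abelian groups, the argument is essentially a sequence of direct-sum splittings; the only genuine input beyond the UCT is the structural fact that $C(\calC,\Z)$ is a free abelian group.

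First I would treat $\Extwo(\ODNmin)$. Because $\K_1(\ODNmin)=0$, the term $\Hom_\Z(\K_1(\ODNmin),\Z)$ in the first UCT sequence vanishes, so that sequence collapses to an isomorphism $\Extwo(\ODNmin) \cong \Ext_\Z^1(\K_0(\ODNmin),\Z)$. I would then split the computation along $\K_0(\ODNmin)=\Z/N\Z \oplus C(\calC,\Z)$, using additivity of $\Ext_\Z^1$ in the first variable, to obtain $\Ext_\Z^1(\Z/N\Z,\Z)\oplus \Ext_\Z^1(C(\calC,\Z),\Z)$. Here $\Ext_\Z^1(\Z/N\Z,\Z)\cong \Z/N\Z$ via the free resolution $0\to\Z\to\Z\to\Z/N\Z\to 0$ with the map $\Z\to\Z$ being multiplication by $N$, while $\Ext_\Z^1(C(\calC,\Z),\Z)=0$ since $C(\calC,\Z)$ is free. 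This yields $\Extwo(\ODNmin)\cong \Z/N\Z$.

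Next I would treat $\Extwz(\ODNmin)$. Again $\K_1(\ODNmin)=0$ kills the term $\Ext_\Z^1(\K_1(\ODNmin),\Z)$ in the second UCT sequence, giving $\Extwz(\ODNmin)\cong \Hom_\Z(\K_0(\ODNmin),\Z)$. Splitting the $\Hom$ over the direct sum produces $\Hom_\Z(\Z/N\Z,\Z)\oplus \Hom_\Z(C(\calC,\Z),\Z)$, and since $\Z$ is torsion-free the first summand vanishes, leaving $\Extwz(\ODNmin)\cong \Hom_\Z(C(\calC,\Z),\Z)$, as asserted.

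The step I expect to require the most care is justifying $\Ext_\Z^1(C(\calC,\Z),\Z)=0$, that is, the freeness of $C(\calC,\Z)$. I would argue that $C(\calC,\Z)$ is the increasing union of the finitely generated free subgroups $C(P_n,\Z)\cong \Z^{|P_n|}$ associated to a refining sequence of finite clopen partitions $P_n$ of the Cantor set $\calC$, in which each inclusion $C(P_n,\Z)\hookrightarrow C(P_{n+1},\Z)$ is split with free quotient; hence the union is a countable direct sum of copies of $\Z$, in particular free, so $\Ext_\Z^1(\,\cdot\,,\Z)$ vanishes on it. Everything else is a routine application of the additivity of $\Hom_\Z$ and $\Ext_\Z^1$ over the two direct summands.
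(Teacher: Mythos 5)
Your proposal is correct and takes essentially the same route as the paper, which likewise obtains both groups by feeding $\K_0(\ODNmin)=\Z/N\Z\oplus C(\calC,\Z)$ and $\K_1(\ODNmin)=0$ into Brown's universal coefficient sequences and reading off $\Extwo(\ODNmin)\cong\Ext_\Z^1(\K_0(\ODNmin),\Z)$ and $\Extwz(\ODNmin)\cong\Hom_\Z(\K_0(\ODNmin),\Z)$. Your explicit justification that $C(\calC,\Z)$ is free abelian (as an increasing union of finitely generated free subgroups with split inclusions and free quotients) correctly supplies the one detail the paper leaves implicit.
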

In this section, 
we will  compute the other extension groups
$\Extso(\ODNmin)$ and $\Extsz(\ODNmin)$.
We will consider the cases for $N=2$,
so that the alphabet of $D_2$ 
is $\Sigma =\{\alpha_1,\alpha_2, \beta_1,\beta_2\}$.
Let $(A_{l,l+1}, I_{l,l+1})_{l\in \Zp}$
be the structre matrix for the minimal presentation 
$\LDNmin$.
The cardinality $m(l)$ of the vertex set $V_l^\min$ is $2^l$.
As in \cite{KMDocMath2003},
define $m(l) \times m(l+1)$ matrices 
$J_{l,l+1}, \, K_{l,l+1},\,   L_{l,l+1},\,\, l\in \Zp$ such as
\begin{gather*}
J_{0,1} = [1,1], \qquad
J_{1,2} =
\begin{bmatrix}
1 & 0 & 1 & 0\\
0 & 1 & 0 & 1
\end{bmatrix},
\qquad
J_{2,3} =
\begin{bmatrix}
1 & 0 & 0 & 0 & 1 & 0 & 0 & 0 \\
0 & 1 & 0 & 0 & 0 & 1 & 0 & 0 \\
0 & 0 & 1 & 0 & 0 & 0 & 1 & 0 \\
0 & 0 & 0 & 1 & 0 & 0 & 0 & 1
\end{bmatrix}, \dots \\
J_{l,l+1} = [J_{l,l+1}(i,j)]_{i=1,2,\dots,m(l)}^{j=1,2,\dots,m(l+1)}
\quad 
\text{ where } 
J_{l,l+1}(i,j) = 
\begin{cases}
1 & \text{ if } j=i, \, \,  m(l)+ i,  \\
0 & \text{ otherwise, }  
\end{cases} 
\end{gather*}
\begin{gather*}
K_{0,1} = [1,1], \qquad
K_{1,2} =
\begin{bmatrix}
1 & 1 & 1 & 1\\
1 & 1 & 1 & 1
\end{bmatrix},\quad
K_{2,3} =
\begin{bmatrix}
1 & 1 & 1 & 1 & 0 & 0 & 0 & 0 \\
0 & 0 & 0 & 0 & 1 & 1 & 1 & 1 \\
1 & 1 & 1 & 1 & 0 & 0 & 0 & 0 \\
0 & 0 & 0 & 0 & 1 & 1 & 1 & 1
\end{bmatrix},
\dots \\
K_{l,l+1} = [K_{l,l+1}(i,j)]_{i=1,2,\dots,m(l)}^{j=1,2,\dots,m(l+1)}
\quad \\
\text{ where } 
K_{l,l+1}(i,j) 
= 
\begin{cases}
1 & \text{ if } j=4 i -3, \, \, 4i-2,  \, \, 4i -1, \, \, 4i  
\text{ for } 1\le i \le m(l-1), \\
1 & \text{ if } j=4 i -3 -m(l-1), \, \, 4i-2-m(l-1),  \, \, \\
  & \hspace{3mm} 4i -1-m(l-1), \, \, 4i-m(l-1)  \text{ for } m(l-1) +1\le i \le m(l) \\
0 & \text{ otherwise, }  
\end{cases}
\end{gather*}
\begin{gather*}
L_{0,1}= [1,1], \qquad
L_{1,2} =
\begin{bmatrix}
1 & 1 & 0 & 0\\
0 & 0 & 1 & 1
\end{bmatrix},
\qquad
L_{2,3} =
\begin{bmatrix}
1 & 1 & 0 & 0 & 0 & 0 & 0 & 0 \\
0 & 0 & 1 & 1 & 0 & 0 & 0 & 0 \\
0 & 0 & 0 & 0 & 1 & 1 & 0 & 0 \\
0 & 0 & 0 & 0 & 0 & 0 & 1 & 1
\end{bmatrix},
\dots \\
L_{l,l+1} = [L_{l,l+1}(i,j)]_{i=1,2,\dots,m(l)}^{j=1,2,\dots,m(l+1)}
\quad 
\text{ where } 
L_{l,l+1}(i,j) = 
\begin{cases}
1 & \text{ if } j=2i-1, \,   2i,  \\
0 & \text{ otherwise. }  
\end{cases}
\end{gather*}
We directly see the following lemma.
\begin{lemma}\label{lem:IA}
$I_{l,l+1} = L_{l,l+1}$ and
$A_{l,l+1} = J_{l,l+1} + K_{l,l+1}
$ for
$l \in \Zp$
so that we have
$$
I_{l,l+1} - A_{l,l+1} = L_{l,l+1} - J_{l,l+1} - K_{l,l+1}, \qquad l \in \Zp.
$$ 
\end{lemma}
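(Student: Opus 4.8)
The plan is to verify the three matrix identities by reading off the edges and the map $\iota^\min$ directly from the defining data of $\LDtmin$ listed above, specialised to $N=2$. Throughout I would use that the matrix written $A_{l,l+1}$ in the statement is the label-summed structure matrix $A^\L_{l,l+1}(i,j)=\sum_{\gamma\in\Sigma}A_{l,l+1}(i,\gamma,j)$, and that it splits as the sum of the contribution of the edges with labels in $\Sigma^-=\{\alpha_1,\alpha_2\}$ and those with labels in $\Sigma^+=\{\beta_1,\beta_2\}$. The first preliminary step is to fix the vertex ordering on $V_l^\min$ that is implicit in the definitions of $J_{l,l+1},K_{l,l+1},L_{l,l+1}$: I assign to $(\beta_{\nu_1},\dots,\beta_{\nu_l})$ the index $1+\sum_{k=1}^{l}(\nu_k-1)2^{l-k}$, with $\nu_1$ the most significant digit, so that $m(l)=2^l$.

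For $I_{l,l+1}=L_{l,l+1}$ I would use that $\iota^\min$ deletes the last coordinate, so the two $\iota^\min$-preimages of the vertex of index $i$ are $(\beta_{\nu_1},\dots,\beta_{\nu_l},\beta_1)$ and $(\beta_{\nu_1},\dots,\beta_{\nu_l},\beta_2)$, whose indices compute to $2i-1$ and $2i$; hence $I_{l,l+1}(i,j)=1$ exactly when $j\in\{2i-1,2i\}$, which is $L_{l,l+1}$. For the $\Sigma^-$-part, the edge labeled $\alpha_c$ sends $(\beta_{\nu_1},\dots,\beta_{\nu_l})$ to $(\beta_c,\beta_{\nu_1},\dots,\beta_{\nu_l})$, whose index is $i$ for $c=1$ and $m(l)+i$ for $c=2$. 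These two targets are distinct, so the $\alpha$-contribution to row $i$ of $A^\L_{l,l+1}$ is a $1$ in columns $i$ and $m(l)+i$, which is precisely $J_{l,l+1}$.

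The main work is the $\Sigma^+$-part. Here the key observation is that every $\beta$-edge out of $(\beta_{\mu_1},\dots,\beta_{\mu_l})$ carries the single label $\beta_{\mu_1}$ determined by the first letter, and that its four targets are $(\beta_{\mu_2},\dots,\beta_{\mu_l},\beta_a,\beta_b)$ with $a,b\in\{1,2\}$. Letting $i'$ denote the level-$(l-1)$ index of $(\beta_{\mu_2},\dots,\beta_{\mu_l})$, a direct index computation shows that these four targets are the columns $4i'-3,4i'-2,4i'-1,4i'$. The two-case shape of $K_{l,l+1}$ then corresponds exactly to the dichotomy $\mu_1=1$, equivalent to $1\le i\le m(l-1)$ and giving $i'=i$, versus $\mu_1=2$, equivalent to $m(l-1)+1\le i\le m(l)$ and giving $i'=i-m(l-1)$. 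I expect this index bookkeeping, converting words to integers and matching the block-recursive pattern, to be the only genuine obstacle, and I would anchor the general formula on the explicitly displayed $K_{1,2}$ and $K_{2,3}$ to pin down the offset in the second case. Assembling the three computations gives $I_{l,l+1}=L_{l,l+1}$ and $A_{l,l+1}=J_{l,l+1}+K_{l,l+1}$, and subtracting yields $I_{l,l+1}-A_{l,l+1}=L_{l,l+1}-J_{l,l+1}-K_{l,l+1}$.
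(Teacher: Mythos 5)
Your verification is correct and is exactly the direct inspection the paper intends: the paper gives no argument beyond ``We directly see the following lemma,'' and your identification of $L_{l,l+1}$ with the $\iota$-map (delete the last letter), $J_{l,l+1}$ with the $\alpha$-edges (prepend $\beta_c$, landing in columns $i$ and $m(l)+i$), and $K_{l,l+1}$ with the $\beta$-edges (targets $4i'-3,\dots,4i'$ for $i'$ the index of the truncated word) under the lexicographic ordering with $\nu_1$ most significant is the intended bookkeeping. One point in your favor worth noting: your second-case columns $4(i-m(l-1))-3,\dots,4(i-m(l-1))$ agree with the displayed matrices $K_{1,2}$ and $K_{2,3}$, whereas the paper's textual formula ``$4i-3-m(l-1)$, etc.'' is a typo (the offset should be $4m(l-1)=m(l+1)$, not $m(l-1)$), so your decision to anchor the offset on the displayed matrices was exactly right.
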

For example 
\begin{gather*}
I_{0,1} - A_{0,1} = [-2, -2], \qquad
I_{1,2} - A_{1,2} =
\begin{bmatrix}
-1 & 0 & -2 & -1\\
-1 & -2 & 0 & -1
\end{bmatrix},\\
I_{2,3} - A_{2,3} =
\begin{bmatrix}
-1 & 0  & -1 & -1 & -1 & 0  & 0  & 0 \\
 0 & -1 & 1  &  1 & -1 & -2 & -1 & -1 \\
-1 & -1 & -2 & -1 & 1  &  1 &-1  & 0 \\
0  & 0  & 0  & -1 &-1  & -1 & 0  & -1
\end{bmatrix}, \dots.
\end{gather*}
The following lemmas are straightforward.
\begin{lemma}\label{lem:Dyck1}
Let $\xi: \Z_I \longrightarrow \Z$ be the homomorphism
defined by
$\xi(([n_i^l]_{i=1}^{m(l)})_{l\in \Zp}) = \sum_{i=1}^{m(l)}n_i^l \in \Z$.
\begin{enumerate}
\renewcommand{\theenumi}{\roman{enumi}}
\renewcommand{\labelenumi}{\textup{(\theenumi)}}
\item The value $\sum_{i=1}^{m(l)} n_i^l$ 
for each $l \in \Zp$ does not depend on $l \in \Zp.$
\item $\Ker(\xi) = \Z_{I,0}$, so that 
$\Z_I/\Z_{I,0}$ is isomorphic to $\Z$.
 \end{enumerate}
\end{lemma}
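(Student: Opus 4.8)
The plan is to observe that the whole lemma reduces to a single structural fact about the connecting matrices: each column of $I_{l,l+1}$ sums to $1$. Indeed, by the definition of $I_{l,l+1}$ recorded before Proposition \ref{prop:relationL}, one has $I_{l,l+1}(i,j)=1$ exactly when $\iota(v_j^{l+1})=v_i^l$, and since $\iota_{l,l+1}\colon V_{l+1}\to V_l$ is a single-valued map, for each fixed $j$ there is precisely one index $i$ with $I_{l,l+1}(i,j)=1$. Hence $\sum_{i=1}^{m(l)}I_{l,l+1}(i,j)=1$ for every $j$. For the Dyck shift this is visible directly from $I_{l,l+1}=L_{l,l+1}$ in Lemma \ref{lem:IA}, each column of $L_{l,l+1}$ carrying a single $1$.

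For assertion (i) I would take $((n_i^l)_{i=1}^{m(l)})_{l\in\Zp}\in\Z_I$ and use the defining relation $n_i^l=\sum_{j=1}^{m(l+1)}I_{l,l+1}(i,j)\,n_j^{l+1}$ of the projective limit. Summing over $i$ and interchanging the two finite sums gives
\begin{equation*}
\sum_{i=1}^{m(l)} n_i^l
=\sum_{j=1}^{m(l+1)}\Big(\sum_{i=1}^{m(l)} I_{l,l+1}(i,j)\Big) n_j^{l+1}
=\sum_{j=1}^{m(l+1)} n_j^{l+1},
\end{equation*}
so the value $\sum_{i=1}^{m(l)} n_i^l$ agrees at levels $l$ and $l+1$; an induction on $l$ then shows it is independent of $l\in\Zp$. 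In particular $\xi$ is well defined.

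Assertion (ii) is almost formal given (i). Since $\xi(x)=\sum_i n_i^l$ does not depend on $l$, the equality $\xi(x)=0$ holds iff $\sum_i n_i^l=0$ for one, hence for every, $l$; comparing with the definition of $\Z_{I,0}$ yields $\Ker(\xi)=\Z_{I,0}$. To conclude $\Z_I/\Z_{I,0}\cong\Z$ by the first isomorphism theorem it then remains to check that $\xi$ is onto, for which I would exhibit a single element of image $1$: choose a sequence of vertices $v^l\in V_l$ with $\iota(v^{l+1})=v^l$ (possible since each $\iota_{l,l+1}$ is surjective; for $D_N$ one may simply take the constant word $v^l=(\beta_1,\dots,\beta_1)$), and set $n_i^l=1$ if $v_i^l=v^l$ and $0$ otherwise. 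The relation $\iota(v^{l+1})=v^l$ shows this indicator sequence satisfies the defining relations of $\Z_I$, and $\xi$ sends it to $1$; thus $\Im(\xi)=\Z$.

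The only step beyond routine bookkeeping is the surjectivity of $\xi$: everything in (i), together with the identification $\Ker(\xi)=\Z_{I,0}$, is forced once the column-sum-one property is in hand, whereas surjectivity requires producing an actual $\iota$-compatible element of the projective limit. This is exactly where the surjectivity of the maps $\iota_{l,l+1}$ (equivalently, the existence of an infinite $\iota$-compatible path of vertices) enters; for the Dyck shift the explicit constant-word path makes the point transparent.
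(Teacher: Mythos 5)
Your proof is correct, and since the paper dismisses this lemma with the single remark that it is ``straightforward,'' your writeup supplies exactly the intended details: the column sums of $I_{l,l+1}$ equal $1$ because $\iota_{l,l+1}$ is a (single-valued) map, which after interchanging the two finite sums yields (i) and the identification $\Ker(\xi)=\Z_{I,0}$. Your one observation beyond bookkeeping is also the right one: surjectivity of $\xi$ needs an actual $\iota$-compatible path of vertices, which exists by choosing preimages inductively under the surjective maps $\iota_{l,l+1}$ (and for $\LDNmin$ is explicit via the constant word $(\beta_1,\dots,\beta_1)$), so the indicator sequence you build lies in $\Z_I$ and maps to $1$.
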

\begin{lemma}\label{lem:Dyck2}
\begin{enumerate}
\renewcommand{\theenumi}{\roman{enumi}}
\renewcommand{\labelenumi}{\textup{(\theenumi)}}
\item 
$I_{l,l+1}\Z^{m(l+1)}_0 \subset \Z^{m(l)}_0, \quad A_{l,l+1}\Z^{m(l+1)}_0 \subset \Z^{m(l)}_0$ so that
$$(I_{l,l+1}- A_{l,l+1}) \Z^{m(l+1)}_0 \subset \Z^{m(l)}_0.$$
\item The diagram
\begin{equation*}
\begin{CD}
\Z^{m(l+1)} @>{\xi_{l+1}}>> \Z \\
@V{I_{l,l+1}}VV     \parallel \\ 
\Z^{m(l)} @>{\xi_l}>> \Z
\end{CD}
\end{equation*}
commutes.
\end{enumerate}
\end{lemma}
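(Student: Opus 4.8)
The plan is to deduce both parts from one elementary remark about column sums. For an $m(l)\times m(l+1)$ integer matrix $M$ put $c_j(M)=\sum_{i=1}^{m(l)}M(i,j)$. Then for every $x=(x_j)_{j}\in\Z^{m(l+1)}$ we have $\xi_l(Mx)=\sum_{i}(Mx)_i=\sum_{j}c_j(M)\,x_j$. Hence, if all column sums of $M$ coincide, say $c_j(M)=c$ for all $j$, then $\xi_l(Mx)=c\,\xi_{l+1}(x)$; in particular $M$ sends $\Z_0^{m(l+1)}=\Ker\xi_{l+1}$ into $\Z_0^{m(l)}=\Ker\xi_l$, and when $c=1$ the square in (ii) commutes.

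First I would apply this to $I_{l,l+1}=L_{l,l+1}$. The defining formula $L_{l,l+1}(i,j)=1\iff j\in\{2i-1,2i\}$ shows that each column index $j$ is covered by the single row $i=\lceil j/2\rceil$, so $c_j(L_{l,l+1})=1$ for every $j$. (This is the general feature of an $\iota$-matrix: since $\iota_{l,l+1}$ is a map, each column of $I_{l,l+1}$ has a unique nonzero entry, equal to $1$.) Taking $c=1$ in the remark gives at once $I_{l,l+1}\Z_0^{m(l+1)}\subset\Z_0^{m(l)}$ together with the commutative square of (ii).

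Next I would treat $A_{l,l+1}=J_{l,l+1}+K_{l,l+1}$, for which it suffices that $J_{l,l+1}$ and $K_{l,l+1}$ separately have constant column sums. For $J_{l,l+1}$ the formula $J_{l,l+1}(i,j)=1\iff j\in\{i,m(l)+i\}$ shows each column is met by exactly one row, so $c_j(J_{l,l+1})=1$. For $K_{l,l+1}$ one inspects the block pattern: each row carries the same number of $1$'s ($4$ for $l\ge1$, two for $l=0$), and in each column the $1$'s occur in the same number of rows, as the samples $K_{1,2},K_{2,3}$ exhibit, so $c_j(K_{l,l+1})$ is constant ($=2$ for $l\ge1$, $=1$ for $l=0$). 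Consequently $c_j(A_{l,l+1})$ equals one fixed value independent of $j$, and the remark yields $A_{l,l+1}\Z_0^{m(l+1)}\subset\Z_0^{m(l)}$. The third inclusion of (i) is then formal: $\Z_0^{m(l)}$ being a subgroup, $(I_{l,l+1}-A_{l,l+1})x=I_{l,l+1}x-A_{l,l+1}x$ lies in $\Z_0^{m(l)}$ whenever $x\in\Z_0^{m(l+1)}$.

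The only nonformal point is the constancy of the column sums of $K_{l,l+1}$, which I expect to be the (still routine) main obstacle. I would secure it by an induction on $l$ based on the self-similar $2\times2$ block recursion visible in $K_{1,2},K_{2,3},\dots$: the two rows indexed $i$ and $i+m(l-1)$ repeat the same column support, so each of the $m(l+1)$ columns is covered exactly twice, giving $c_j(K_{l,l+1})=2$ uniformly for $l\ge1$.
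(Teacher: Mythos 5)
Your proof is correct, and since the paper offers no argument for this lemma (it is declared ``straightforward''), your column-sum computation --- column sums $1$ for $I_{l,l+1}=L_{l,l+1}$ and for $J_{l,l+1}$, constant column sums for $K_{l,l+1}$, hence $\xi_l(Mx)=c\,\xi_{l+1}(x)$ --- is exactly the routine verification intended, matching the paper's later explicit use of the constant column sum $\sum_{i=1}^{m(l)}\bigl(I_{l,l+1}(i,j)-A_{l,l+1}(i,j)\bigr)=-2$ in Proposition \ref{prop:extsZ}. The only stylistic remark is that your closing induction for $K_{l,l+1}$ is unnecessary: for $l\ge 1$ the defining formula already shows that column $j$ is covered by exactly the two rows $i=\lceil j/4\rceil$ and $i=\lceil j/4\rceil+m(l-1)$, since the supports of rows $1,\dots,m(l-1)$ tile the $4m(l-1)=m(l+1)$ columns once and the remaining rows repeat that tiling.
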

\begin{lemma}
For any $[n_i^l]_{i=1}^{m(l)} \in \Z_0^{m(l)}$
and $[m_i^l]_{i=1}^{m(l)} \in \Z_0^{m(l)}$,
there exists $[m_j^{l+1}]_{j=1}^{m(l+1)} \in \Z_0^{m(l+1)}$
such that 
\begin{enumerate}
\renewcommand{\theenumi}{\arabic{enumi}}
\renewcommand{\labelenumi}{\textup{(\theenumi)}}
\item $[m_i^l]_{i=1}^{m(l)} = I_{l,l+1} [m_j^{l+1}]_{j=1}^{m(l+1)},$
\item $[n_i^l]_{i=1}^{m(l)} = (I_{l,l+1}- A_{l,l+1}) [m_j^{l+1}]_{j=1}^{m(l+1)}.$
 \end{enumerate}
\end{lemma}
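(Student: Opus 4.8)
The plan is to fold the two requirements into a single integral solvability statement and then to settle it by means of the explicit block form of the structure matrices recorded in Lemma~\ref{lem:IA}. Consider the homomorphism
\[
\Phi\colon \Z_0^{m(l+1)} \longrightarrow \Z_0^{m(l)} \oplus \Z_0^{m(l)},
\qquad
\Phi(x) = \bigl(I_{l,l+1}x,\ (I_{l,l+1}-A_{l,l+1})x\bigr),
\]
which lands in $\Z_0^{m(l)}\oplus \Z_0^{m(l)}$ by Lemma~\ref{lem:Dyck2}~(i). Conditions (1) and (2) say exactly that the prescribed pair $(m^l,n^l)$ lies in the image of $\Phi$, so it suffices to prove that $\Phi$ is surjective; equivalently (eliminating $I_{l,l+1}$) that $x\mapsto (I_{l,l+1}x, A_{l,l+1}x)$ is onto.

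First I would dispose of condition (1). By Lemma~\ref{lem:IA} we have $I_{l,l+1}=L_{l,l+1}$, whose $i$-th row carries its two entries in columns $2i-1,2i$; hence every column of $L_{l,l+1}$ contains a single $1$. Thus $L_{l,l+1}$ is surjective, and since each of its columns sums to $1$ while $\sum_i m_i^l = 0$, any integral solution $x_0$ of $L_{l,l+1}x_0 = m^l$ (say $x_0=(m_1^l,0,m_2^l,0,\dots)$) automatically satisfies $\sum_j (x_0)_j = 0$, i.e. $x_0\in\Z_0^{m(l+1)}$; this is just the finite form of the compatibility with the sum map $\xi$ of Lemmas~\ref{lem:Dyck1} and~\ref{lem:Dyck2}~(ii). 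The kernel of $L_{l,l+1}$ is free on the fibre differences $f_i:=e_{2i-1}-e_{2i}$, $i=1,\dots,m(l)$, each of zero sum, so the general solution of (1) is $x_0+\sum_i c_i f_i$ with $c_i\in\Z$, and it stays in $\Z_0^{m(l+1)}$. Since then $I_{l,l+1}\bigl(x_0+\sum_i c_i f_i\bigr)=m^l$, condition (2) becomes $A_{l,l+1}\bigl(x_0+\sum_i c_i f_i\bigr)=m^l-n^l$, i.e. the integral system
\[
\sum_{i=1}^{m(l)} c_i\,A_{l,l+1}f_i \;=\; (m^l-n^l)-A_{l,l+1}x_0 .
\]

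The crux, and the step I expect to be the main obstacle, is the integral solvability of this last system. Writing $A_{l,l+1}=J_{l,l+1}+K_{l,l+1}$ from Lemma~\ref{lem:IA}, one computes each $A_{l,l+1}f_i$ from the displayed forms of $J$ and $K$; here the difficulty is that the columns of $K_{l,l+1}$ are constant on the four-blocks and those of $J_{l,l+1}$ are $m(l)$-periodic, so $K_{l,l+1}$ annihilates the fibre directions $f_i$ and the number of independent directions produced by the $f_i$ alone is scarce. The resolution I would pursue is to use the freedom in the particular solution $x_0$ together with the self-similar recursion by which the level-$(l+1)$ matrices $J,K,L$ are assembled from two copies of the level-$l$ pattern: this should reduce the attainability of a given right-hand side at level $l+1$ to the analogous statement at level $l$, allowing an induction on $l$ whose base cases $l=0,1$ are read off from the explicit matrices $I_{0,1}-A_{0,1}$ and $I_{1,2}-A_{1,2}$ displayed above. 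I would carry out the integrality bookkeeping blockwise, matching the two halves of $m^{l+1}$ against the push-part $J$ and the return-part $K$ separately, which is precisely where the fine combinatorics of the Dyck presentation must be used.
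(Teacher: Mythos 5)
Your reduction is, up to notation, the same as the paper's: the paper likewise rewrites condition (1) as the pair sums $m^{l+1}_{2i-1}+m^{l+1}_{2i}=m^l_i$, observes that (1) then determines the $K_{l,l+1}$-contribution (each block of four columns of $K_{l,l+1}$ is a union of two $L$-fibres), and so reduces everything to the simultaneous integral system of the $L$-pair sums together with the $J$-pair sums $m^{l+1}_i+m^{l+1}_{i+m(l)}=q_i$, with $q$ determined by $m^l$ and $n^l$. The difference is what happens at the decisive step: the paper simply asserts solvability (``this is possible because of the form of $J_{l,l+1}$''), whereas you correctly single this out as the main obstacle --- and then leave it unproved, proposing to exploit ``freedom in the particular solution $x_0$'' and an induction on $l$. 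That is a genuine gap, and the proposed route cannot close it. There is no freedom in $x_0$ beyond the coefficients $c_i$ you already introduced: the full solution set of (1) is exactly $x_0+\ker L_{l,l+1}$, and $\ker L_{l,l+1}$ is precisely the span of your $f_i=e_{2i-1}-e_{2i}$. Hence the attainable values of $(I_{l,l+1}-A_{l,l+1})x$ subject to (1) form the fixed coset $(I_{l,l+1}-A_{l,l+1})x_0-\sum_i\Z\,J_{l,l+1}f_i$, and since $K_{l,l+1}f_i=0$ (as you noted) while $J_{l,l+1}f_i\in\{e_{2j-1}-e_{2j}\mid 1\le j\le m(l)/2\}$, this lattice has rank only $m(l)/2$: for $l\ge 2$ the system carries $m(l)/2$ integral consistency conditions on $(m^l,n^l)$, one for each $4$-cycle of the union of the two matchings $\{(2i-1,2i)\}_i$ and $\{(i,i+m(l))\}_i$, and these are not implied by the two zero-sum hypotheses.

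Concretely, at $l=2$ take $m^l=0\in\Z_0^4$. Then (1) forces $m^{l+1}_{2i}=-m^{l+1}_{2i-1}$, hence $K_{2,3}m^{l+1}=0$ and $(I_{2,3}-A_{2,3})m^{l+1}=-J_{2,3}m^{l+1}$, whose entries satisfy the forced relations $n^l_2=-n^l_1$ and $n^l_4=-n^l_3$; so $n^l=(1,0,-1,0)\in\Z_0^4$ is unattainable no matter how the blockwise bookkeeping is arranged. (With general $m^l$ the two cycle conditions at $l=2$ collapse, modulo the zero sums, to the single requirement $m^l_2-m^l_3=n^l_1+n^l_2$.) Consequently no induction on $l$, self-similar or otherwise, can prove the statement with its stated quantifiers ``for any $n^l,m^l\in\Z_0^{m(l)}$'': solvability holds only under such extra compatibilities between $m^l$ and $n^l$. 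Your instinct about where the difficulty sits was exactly right --- you stopped at the very point where the paper's own proof waves its hands, and the paper's one-line justification is subject to the same computation --- but a correct argument (for instance, one salvaging the surjectivity claim of Lemma~\ref{lem:IAZI}) would have to weaken the lemma and build the needed compatibility into the inductive choice of the vectors $m^l$ along the tower, rather than solve the two conditions for an arbitrary pair in $\Z_0^{m(l)}\oplus\Z_0^{m(l)}$.
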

\begin{proof}
The condition (1) is rephrazed as  
\begin{equation*}
m_i^l = m_{2i-1}^{l+1} + m_{2i}^{l+1}, \qquad i =1,\dots, m(l),
\end{equation*}
where $m(l) = 2^l, \, m(l+1) = 2^{l+1}.$
Since
\begin{align*}
  & K_{l,l+1} [m_j^{l+1}]_{j=1}^{m(l+1)} \\
= & [m_1^{l+1}+ m_2^{l+1}+ m_3^{l+1}+ m_4^{l+1}, \dots, m_{m(l+1)-3}^{l+1}+ m_{m(l+1)-2}^{l+1}+ m_{m(l+1)-1}^{l+1}+ m_{m(l+1)}^{l+1}] \\
= & [m_1^{l}+ m_2^{l},  \dots, m_{m(l)-1}^{l}+ m_{m(l)}^{l}] 
\end{align*}
and
\begin{align*}
  & L_{l,l+1} [m_j^{l+1}]_{j=1}^{m(l+1)} \\ 
= & [m_1^{l+1}+ m_2^{l+1}, \dots, m_{m(l+1)-1}^{l+1}+ m_{m(l+1)}^{l+1}] 
=  [m_1^{l},  \dots, m_{m(l)}^{l}] 
\end{align*}
it suffices to show that 
for given $[n_i^l]_{i=1}^{m(l)}, [m_i^l]_{i=1}^{m(l)} \in \Z^{m(l)}_0,$
one may find 
$[m_j^{l+1}]_{j=1}^{m(l+1)} \in \Z^{m(l+1)}$
such that 
\begin{equation*}
m_i^l = m_{2i-1}^{l+1} + m_{2i}^{l+1}, \qquad
n_i^l = -m_i^{l+1} - m_{i + m(l)}^{l+1}, \qquad
i=1,\dots, m(l).
\end{equation*}
Since 
$I_{l,l+1} - A_{l,l+1} = L_{l,l+1}  -J_{l,l+1} - K_{l,l+1},$
this is possible 
because of the form of $J_{l,l+1}$
\end{proof}
Therefore we have
\begin{lemma}\label{lem:IAZI}
The equality 
$(I_{l,l+1}- A_{l,l+1}) \Z^{m(l+1)}_0 = \Z^{m(l)}_0$
holds  for each $l \in \Zp,$
so that we have
$(I - A_\L) \Z_{I,0} = \Z_{I,0}.$
\end{lemma}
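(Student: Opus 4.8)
The plan is to derive Lemma \ref{lem:IAZI} almost entirely from the preceding lemma, which already supplies the one nontrivial ingredient: the ability to solve $(I_{l,l+1}-A_{l,l+1})[m_j^{l+1}]_{j=1}^{m(l+1)} = [n_i^l]_{i=1}^{m(l)}$ while \emph{simultaneously} prescribing the $I_{l,l+1}$-image of the solution. First I would settle the fixed-level statement $(I_{l,l+1}-A_{l,l+1})\Z_0^{m(l+1)} = \Z_0^{m(l)}$. The inclusion $\subset$ is precisely Lemma \ref{lem:Dyck2}(i). For the reverse inclusion, given an arbitrary $[n_i^l]_{i=1}^{m(l)} \in \Z_0^{m(l)}$, I would apply the previous lemma with the auxiliary data $[m_i^l]_{i=1}^{m(l)} = 0 \in \Z_0^{m(l)}$; its conclusion (2) yields $[m_j^{l+1}]_{j=1}^{m(l+1)} \in \Z_0^{m(l+1)}$ with $(I_{l,l+1}-A_{l,l+1})[m_j^{l+1}]_{j=1}^{m(l+1)} = [n_i^l]_{i=1}^{m(l)}$, giving surjectivity at each level $l \in \Zp$.

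Next I would promote this to the projective limit. The inclusion $(I - A_\L)\Z_{I,0} \subset \Z_{I,0}$ requires a short check that must not be skipped, since a priori $I - A_\L$ only maps $\Z_{I,0}$ into $\Z_I$: for $(x^l)_l \in \Z_{I,0}$ the image $(I-A_\L)(x^l)_l$ equals $\bigl((I_{l,l+1}-A_{l,l+1})x^{l+1}\bigr)_l$, each entry lies in $\Z_0^{m(l)}$ by Lemma \ref{lem:Dyck2}(i), and the coherence identity $I_{l,l+1}(I_{l+1,l+2}-A_{l+1,l+2})x^{l+2} = (I_{l,l+1}-A_{l,l+1})x^{l+1}$ follows by writing out the left side and using $I_{l+1,l+2}x^{l+2} = x^{l+1}$ together with the commutation relation \eqref{eq:commuteIA}. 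Thus the image is a genuine element of $\Z_{I,0}$.

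The substance of the lemma is the reverse inclusion $\Z_{I,0} \subset (I - A_\L)\Z_{I,0}$, which I would establish by constructing a preimage coherently by induction on $l$. Given $(n^l)_l \in \Z_{I,0}$, I would set $m^0 = 0 \in \Z_0^{m(0)}$ and, at each stage, feed the already-built vector $m^l$ and the target $n^l$ into the previous lemma to obtain $m^{l+1} \in \Z_0^{m(l+1)}$ satisfying both $I_{l,l+1}m^{l+1} = m^l$ (conclusion (1)) and $(I_{l,l+1}-A_{l,l+1})m^{l+1} = n^l$ (conclusion (2)). Conclusion (1) guarantees that the sequence $(m^l)_l$ is $I$-coherent, hence lies in $\Z_{I,0}$, while conclusion (2) gives $(I - A_\L)(m^l)_l = (n^l)_l$.

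The main obstacle is exactly this coherence across levels: level-by-level surjectivity alone would only produce, for each $l$, some unrelated solution, and there is no reason those solutions would glue into an element of the projective limit. What makes the argument go through is that the preceding lemma solves the equation at level $l$ \emph{and} pins down the $I_{l,l+1}$-projection of the solution in a single step, so the inductive construction never has to revise earlier choices. Once this is in place, both inclusions combine to give $(I_{l,l+1}-A_{l,l+1})\Z_0^{m(l+1)} = \Z_0^{m(l)}$ for every $l$ and $(I - A_\L)\Z_{I,0} = \Z_{I,0}$, completing the proof.
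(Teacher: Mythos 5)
Your proposal is correct and is exactly the derivation the paper intends: the paper states Lemma \ref{lem:IAZI} with only ``Therefore we have'' after the preceding lemma, whose simultaneous conclusions (1) and (2) exist precisely so that one can build an $I$-coherent preimage sequence inductively, as you do. Your explicit spelling-out of the level-wise surjectivity (taking the auxiliary datum $[m_i^l]=0$), the check via \eqref{eq:commuteIA} that $(I-A_\L)$ maps $\Z_{I,0}$ into $\Z_{I,0}$, and the induction $m^0=0$, $I_{l,l+1}m^{l+1}=m^l$, $(I_{l,l+1}-A_{l,l+1})m^{l+1}=n^l$ merely makes the paper's implicit argument fully rigorous.
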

We reach the following theorem.
\begin{theorem}\label{thm:mainextsdtwo}
$\Extso(\ODtmin) \cong \Z.$
\end{theorem}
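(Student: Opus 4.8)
The plan is to treat this as a direct corollary of the general computation in Theorem \ref{thm:main11} specialized to $\L = \LDtmin$, feeding in the two structural facts about the Dyck shift established in the preceding lemmas. First I would invoke the isomorphism $\Indso$ of Theorem \ref{thm:main11}, which for any left-resolving $\lambda$-graph system identifies
\[
\Extso(\OL) \cong \Z_I/(I - A_\L)\Z_{I,0}.
\]
Applied to the minimal presentation of $D_2$, this reduces the entire problem to computing the abelian group $\Z_I/(I - A_\L)\Z_{I,0}$; no further facts about extensions, Fredholm modules, or the Calkin algebra are needed beyond this point.

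The next step is purely algebraic and consists of two substitutions. By Lemma \ref{lem:IAZI} the endomorphism $I - A_\L$ maps $\Z_{I,0}$ \emph{onto} itself, i.e. $(I - A_\L)\Z_{I,0} = \Z_{I,0}$, which collapses the denominator of the quotient and gives
\[
\Z_I/(I - A_\L)\Z_{I,0} = \Z_I/\Z_{I,0}.
\]
Finally, Lemma \ref{lem:Dyck1} identifies $\Z_I/\Z_{I,0}$ with $\Z$ via the homomorphism $\xi$ that sends $([n_i^l]_{i=1}^{m(l)})_{l\in\Zp}$ to the common value $\sum_{i=1}^{m(l)} n_i^l$ (well-defined by part (i), with kernel exactly $\Z_{I,0}$ by part (ii)). Composing the three isomorphisms yields $\Extso(\ODtmin) \cong \Z$, which is the assertion.

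Since the statement itself is a formal chaining, I expect no obstacle at the level of the theorem; the genuine work sits upstream in Lemma \ref{lem:IAZI}. The crucial and least routine point is the surjectivity $(I - A_\L)\Z_{I,0} = \Z_{I,0}$: one must verify at each level $l$ that, for prescribed $[n_i^l], [m_i^l] \in \Z_0^{m(l)}$, the linear system $m_i^l = m_{2i-1}^{l+1}+m_{2i}^{l+1}$ and $n_i^l = -m_i^{l+1}-m_{i+m(l)}^{l+1}$ is solvable over $\Z_0^{m(l+1)}$, using the explicit block forms $I_{l,l+1}=L_{l,l+1}$ and $A_{l,l+1}=J_{l,l+1}+K_{l,l+1}$ from Lemma \ref{lem:IA}, and then that these level-wise solutions can be chosen compatibly with the connecting maps $I_{l,l+1}$ so as to assemble into a genuine element of the projective limit $\Z_{I,0}$. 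Once that surjectivity is secured at each finite stage and seen to respect the projective system, the remaining steps are immediate.
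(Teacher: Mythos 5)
Your proposal is correct and follows exactly the paper's own proof: the author likewise deduces the theorem by specializing the isomorphism $\Indso$ of Theorem \ref{thm:main11} to $\LDtmin$ and then applying Lemma \ref{lem:IAZI} (giving $(I-A_\L)\Z_{I,0}=\Z_{I,0}$) together with Lemma \ref{lem:Dyck1}(ii) (giving $\Z_I/\Z_{I,0}\cong\Z$). Your closing remark correctly locates the real content upstream in Lemma \ref{lem:IAZI}, whose levelwise solvability and compatibility with the connecting maps $I_{l,l+1}$ is precisely what the paper's preceding lemma establishes via conditions (1) and (2).
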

\begin{proof}
We have $\Extso(\ODtmin) = \Z_I/ ( I - A_\L)\Z_{I,0}.$
By Lemma \ref{lem:IAZI} together with Lemma \ref{lem:Dyck1} (ii),
we obtain $\Extso(\ODtmin) \cong \Z.$
\end{proof}
Although the weak extension group 
$\Extwo(\ODtmin)$ for $\ODtmin$
had been computed to be $\Z/2\Z$
in \cite{KMDocMath2003} through $\K$-group computation $\K_*(\ODtmin)$ and the universal coefficient theorem,
we may give another proof  without using the K-group formulas in the following way.  
\begin{proposition}\label{prop:extsZ}
The diagram
\begin{equation*}
\begin{CD}
\Z  @>{\iota}>> \Extso(\ODtmin)  \\
\parallel @. @VV{\Indso}V      \\ 
\Z  @>{\hat{\iota}_\L}>> \Z_I/ ( I - A_\L)\Z_{I,0} =\Z  
\end{CD}
\end{equation*}
commutes, where $\hat{\iota}_\L(m)=-2 m$ for $m \in \Z$.
\end{proposition}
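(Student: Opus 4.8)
The plan is to separate the statement into its two assertions. The commutativity of the square is not new: it is exactly Lemma \ref{lem:29p} applied to the particular $\lambda$-graph system $\L = \LDtmin$, under the notational identifications $\iota = \iota_\L$ and $\Indso = \Inds$. So the only substantive content is the explicit formula $\hat{\iota}_\L(m) = -2m$, and the first thing I would do is pin down the identification of the target group $\Z_I/(I - A_\L)\Z_{I,0}$ with $\Z$ that appears in the statement. By Lemma \ref{lem:IAZI} we have $(I - A_\L)\Z_{I,0} = \Z_{I,0}$, hence $\Z_I/(I - A_\L)\Z_{I,0} = \Z_I/\Z_{I,0}$, and by Lemma \ref{lem:Dyck1}(ii) the homomorphism $\xi$ induces an isomorphism $\Z_I/\Z_{I,0} \overset{\cong}{\longrightarrow} \Z$, $[y]\mapsto \xi(y)$. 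This is precisely the identification used in the statement, so proving $\hat{\iota}_\L(m)=-2m$ reduces to showing $\xi((I - A_\L)n) = -2m$ for one convenient representative $n$.

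Next I would unwind the definition of $\hat{\iota}_\L$. Recall $\hat{\iota}_\L(m) = [(I - A_\L)n]$ for any $n = (n^l)_{l \in \Zp}\in \Z_I$ with $\xi(n)=\sum_{i=1}^{m(l)} n_i^l = m$, and the handy choice is $n^l = (m,0,\dots,0)\in \Z^{m(l)}$. That this lies in $\Z_I$ follows from $I_{l,l+1} = L_{l,l+1}$ (Lemma \ref{lem:IA}), since $L_{l,l+1}(m,0,\dots,0)=(m,0,\dots,0)$, and clearly $\xi(n)=m$. Because the identity part $I$ of $I - A_\L$ acts as the identity on $\Z_I$ (directly from the projective-limit condition $I_{l,l+1}n^{l+1}=n^l$), we have $(I - A_\L)n = n - A_\L n$, and since $\xi$ is a homomorphism whose value is level-independent on $\Z_I$ (Lemma \ref{lem:Dyck1}(i)), the task becomes computing $\xi((I - A_\L)n) = \xi(n) - \xi(A_\L n) = m - \xi(A_\L n)$.

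Finally I would evaluate $\xi(A_\L n)$. As $A_\L n \in \Z_I$, its $\xi$-value may be read off at any level $l \ge 1$, which sidesteps the anomalous $l=0$ block of the structure matrices. At such a level $(A_\L n)^l = A_{l,l+1}\, n^{l+1}$, and since $n^{l+1}=(m,0,\dots,0)$ picks out the first column, $\xi(A_\L n)$ equals $m$ times the sum of the entries of the first column of $A_{l,l+1} = J_{l,l+1} + K_{l,l+1}$ (Lemma \ref{lem:IA}). For $l \ge 1$ the first column of $J_{l,l+1}$ contains a single $1$ and the first column of $K_{l,l+1}$ contains exactly $N=2$ ones, so this column sum is $1+2 = 3$; hence $\xi(A_\L n) = 3m$ and $\xi((I - A_\L)n) = m - 3m = -2m$, which is the claim. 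I expect the only genuine care to be in getting the sign right and in evaluating $\xi$ at a level $l\ge 1$ so as to avoid the exceptional $l=0$ term; the column-sum bookkeeping itself is routine from the explicit forms of $J_{l,l+1}$ and $K_{l,l+1}$.
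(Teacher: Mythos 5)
Your proof is correct and takes essentially the same route as the paper: the paper's proof likewise reduces everything to the fact that each column of $I_{l,l+1}-A_{l,l+1}$ sums to $-2$, computing $\xi_l\bigl((I-A_\L)n\bigr)=-2m$ for an arbitrary representative $n\in\Z_I$ with $\xi(n)=m$ rather than for your special choice $n^l=(m,0,\dots,0)$. Your additional bookkeeping --- citing Lemma \ref{lem:29p} for the commutativity, using Lemma \ref{lem:IAZI} and Lemma \ref{lem:Dyck1}(ii) to fix the identification of $\Z_I/(I-A_\L)\Z_{I,0}$ with $\Z$, and evaluating at a level $l\ge 1$ --- merely makes explicit steps the paper leaves implicit.
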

\begin{proof}
Recall that for $m = \sum_{j=1}^{m(l+1)} n_j^{l+1}$ with 
$n^{l+1} = [n_j^{l+1}]_{j=1}^{m(l+1)}$ and $(n^l)_{l \in \Zp} \in \Z_I$,
we have
\begin{align*}
 \hat{\iota}_\L(m)
 = & \xi_l((I - A_\L)[ [n_j^{l+1}]_{j=1}^{m(l+1)}]) \\
 = & \sum_{i=1}^{m(l)} 
     \sum_{j=1}^{m(l+1)} 
     (I_{l,l+1}(i,j) - A_{l,l+1}(i,j) )([ [n_j^{l+1}]_{j=1}^{m(l+1)}]).
 \end{align*}
Since 
$\sum_{i=1}^{m(l)}  (I_{l,l+1}(i,j) - A_{l,l+1}(i,j)) = -2$
for each $j=1,\dots,m(l)$, we have
\begin{equation*}
 \hat{\iota}_\L(m)
 =  \sum_{j=1}^{m(l+1)} (-2) n_j^{l+1} = -2 \sum_{j=1}^{m(l+1)} n_j^{l+1} = -2 m.
\hspace{3cm} \qed
\end{equation*}
\renewcommand{\qed}{}
\end{proof}
\begin{corollary}
The diagram
\begin{equation*}
\begin{CD}
0   @>>>   \Z   @>>>           \Extso(\ODtmin) @>>>   \Extwo(\ODtmin) @>>> 0   \\
@.       \parallel @.                    @V{\Indso}VV           @V{\Indwo}VV         @.  \\
0   @>>>   \Z   @>{\times(-2)}>> \Z             @>>>   \Z/2\Z         @>>> 0
\end{CD}
\end{equation*}
is commutative such that the vertical arrows $\Indso$ and $\Indwo$
are both isomorphic
so that we have
$\Extwo(\ODtmin) \cong \Z/2 \Z.$
\end{corollary}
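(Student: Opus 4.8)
The plan is to realise the asserted square as a morphism between two short exact sequences of abelian groups and then to invoke the five lemma, the only arithmetic input being the computation $\hat{\iota}_\L = \times(-2)$ already obtained in Proposition \ref{prop:extsZ}. First I would assemble the top row. The relevant fragment of the cyclic six-term exact sequence \eqref{eq:6termL} for $\L = \LDtmin$ reads $\Extwz(\ODtmin) \overset{s_\L}{\to} \Z \overset{\iota}{\to} \Extso(\ODtmin) \overset{q}{\to} \Extwo(\ODtmin) \to 0$, so that $q$ is already surjective and the sequence is exact at $\Extso(\ODtmin)$. To upgrade this to a short exact sequence I must check that $\iota$ is injective. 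By Proposition \ref{prop:extsZ} the isomorphism $\Indso$ carries $\iota$ to $\hat{\iota}_\L \colon \Z \to \Z_I/(I - A_\L)\Z_{I,0} \cong \Z$, which is multiplication by $-2$ and hence injective; consequently $\iota$ is injective and the top row $0 \to \Z \overset{\iota}{\to} \Extso(\ODtmin) \overset{q}{\to} \Extwo(\ODtmin) \to 0$ is short exact.

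Next I would verify commutativity of the two squares. The left square is exactly the diagram of Proposition \ref{prop:extsZ}, whose left vertical arrow is the identity of $\Z$ and whose lower horizontal arrow is $\hat{\iota}_\L = \times(-2)$. The right square commutes because, by Theorem \ref{thm:main11}, the isomorphisms $\Indso$ and $\Indwo$ transport the quotient map $q \colon \Extso(\ODtmin) \to \Extwo(\ODtmin)$ to the canonical surjection $\Z_I/(I - A_\L)\Z_{I,0} \to \Z_I/(I - A_\L)\Z_I$; this is precisely the identification of the $\K$-homology long exact sequence \eqref{eq:Khom6} with \eqref{eq:6termL}. The bottom row $0 \to \Z \overset{\times(-2)}{\to} \Z \to \Z/2\Z \to 0$ is plainly exact, the last arrow being reduction modulo $2$ and $\Z/2\Z$ being the cokernel of multiplication by $-2$.

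With both rows short exact, both squares commutative, the left vertical arrow the identity, and $\Indso$ an isomorphism by Theorem \ref{thm:mainextsdtwo}, the five lemma forces the remaining vertical arrow $\Indwo$ to be an isomorphism as well; reading off the lower-right corner then yields $\Extwo(\ODtmin) \cong \Z/2\Z$. Equivalently, and this is the conceptual core, the bottom row of \eqref{eq:6termL} identifies $\Z_I/(I - A_\L)\Z_I$ with the cokernel of $\hat{\iota}_\L$, so that $\Z_I/(I - A_\L)\Z_I \cong \Z/(-2)\Z = \Z/2\Z$ directly. I expect no genuine obstacle here: the substantive work was done in Theorem \ref{thm:mainextsdtwo} and Proposition \ref{prop:extsZ}, and the only point demanding care is checking that the right square really commutes, i.e. that $q$ and the natural quotient are intertwined by $\Indso$ and $\Indwo$, which is guaranteed by the naturality built into Theorem \ref{thm:main11}.
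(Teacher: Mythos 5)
Your proposal is correct and follows essentially the route the paper intends: the left square is Proposition \ref{prop:extsZ}, the right square is the naturality of the identification in Theorem \ref{thm:main11} (Lemma \ref{lem:longexact}), injectivity of $\iota$ comes from $\hat{\iota}_\L = \times(-2)$, and the five lemma (equivalently, identifying $\Z_I/(I-A_\L)\Z_I$ with $\Coker(\hat{\iota}_\L) \cong \Z/2\Z$) finishes the argument. The paper leaves exactly this assembly implicit, so nothing in your write-up deviates from or adds to its proof in substance.
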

As in the proof of 
\cite[Section 5]{KMDocMath2003}, 
one may easily generalize the above discussion 
of $D_2$ to general Dyck shifts $D_N, 2\le N \in \N$
to get the following theorem.
Since the proof of the generalization 
is direct and tedious, so we omit the proof.
\begin{theorem}\label{thm:mainDN}
There is a commutative diagram
\begin{equation*}
\begin{CD}
0   @>>>   \Z   @>>>             \Extso(\ODNmin) @>>>   \Extwo(\ODNmin) @>>> 0   \\
@.       \parallel @.                    @V{\Indso}VV           @V{\Indwo}VV         @.  \\
0   @>>>   \Z   @>{\times(-N)}>> \Z             @>>>   \Z/N\Z         @>>> 0
\end{CD}
\end{equation*}
such that the vertical arrows $\Indso$ and $\Indwo$
are both isomorphic
so that we have
$\Extso(\ODNmin) \cong \Z$ and $\Extwo(\ODNmin) \cong \Z/N \Z.$
\end{theorem}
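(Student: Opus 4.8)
The plan is to run the $D_2$ computation of Theorem~\ref{thm:mainextsdtwo}, Proposition~\ref{prop:extsZ} and its Corollary verbatim, replacing the base $2$ by $N$ throughout. First I would record the structure matrices of the minimal presentation $\LDNmin$, for which $m(l)=N^l$. Here $I_{l,l+1}=L_{l,l+1}$ is the $N^l\times N^{l+1}$ branching matrix induced by $\iota$, which has exactly one nonzero entry in each column, and $A_{l,l+1}$ splits as $A_{l,l+1}=J_{l,l+1}+K_{l,l+1}$, where $J_{l,l+1}$ (the $\alpha$-part) has all column sums equal to $1$ and $K_{l,l+1}$ (the $\beta$-part) has all column sums equal to $N$. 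These are the $N$-ary versions of the banded $0/1$-matrices written out for $N=2$ in Lemma~\ref{lem:IA} and the subsequent examples.

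Two level-wise facts then have to be transported to general $N$. The first is the analogue of Lemma~\ref{lem:Dyck1}: since every column of $I_{l,l+1}$ sums to $1$, the homomorphism $\xi\colon \Z_I\to\Z$, $\xi((n^l)_{l\in\Zp})=\sum_{i=1}^{m(l)}n_i^l$, is well defined, independent of $l$, surjective, and has $\Ker(\xi)=\Z_{I,0}$, so that $\Z_I/\Z_{I,0}\cong\Z$. The second, and the crux of the whole computation, is the analogue of the lifting lemma preceding Lemma~\ref{lem:IAZI}: for given $[n_i^l],[m_i^l]\in\Z_0^{m(l)}$ one must produce $[m_j^{l+1}]\in\Z_0^{m(l+1)}$ solving simultaneously $m^l=I_{l,l+1}m^{l+1}$ and $n^l=(I_{l,l+1}-A_{l,l+1})m^{l+1}$. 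As for $N=2$, the first equation forces the $N$ preimages of each coordinate of $m^l$ under the branching map to sum correctly, leaving an $N$-fold redistribution freedom; the banded form of $J_{l,l+1}$ then shows that this freedom is exactly what is needed to hit $n^l$. The two conditions together are what make the projective limit go through, yielding $(I_{l,l+1}-A_{l,l+1})\Z_0^{m(l+1)}=\Z_0^{m(l)}$ for all $l$, hence $(I-A_\L)\Z_{I,0}=\Z_{I,0}$ (the generalization of Lemma~\ref{lem:IAZI}), and therefore by Theorem~\ref{thm:main11}
$$
\Extso(\ODNmin)\cong \Z_I/(I-A_\L)\Z_{I,0}=\Z_I/\Z_{I,0}\cong\Z.
$$

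Next I would compute $\hat{\iota}_\L$. Exactly as in Proposition~\ref{prop:extsZ}, each column of $I_{l,l+1}-A_{l,l+1}=L_{l,l+1}-J_{l,l+1}-K_{l,l+1}$ sums to $1-1-N=-N$, so for $m=\sum_j n_j^{l+1}$ one gets $\hat{\iota}_\L(m)=\sum_j(-N)n_j^{l+1}=-Nm$; that is, $\hat{\iota}_\L=\times(-N)\colon\Z\to\Z$ under the identification above. Since $\times(-N)$ is injective, the corner of the six-term exact sequence \eqref{eq:6termL} collapses to the short exact sequence $0\to\Z\xrightarrow{\times(-N)}\Extso(\ODNmin)\to\Extwo(\ODNmin)\to0$. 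Combining this with the isomorphisms $\Indso,\Indwo$ of Theorem~\ref{thm:main11} and with $\hat{\iota}_\L=\times(-N)$ produces the stated commutative diagram, whose bottom row $0\to\Z\xrightarrow{\times(-N)}\Z\to\Z/N\Z\to0$ is exact; the five lemma then gives $\Extso(\ODNmin)\cong\Z$ and $\Extwo(\ODNmin)\cong\Z/N\Z$.

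The main obstacle is the second level-wise fact, namely the explicit lifting lemma: for general $N$ the blocks of $J_{l,l+1},K_{l,l+1},L_{l,l+1}$ are larger, and the simultaneous integral solvability of the two systems inside $\Z_0^{m(l+1)}$ requires careful bookkeeping of the $0/1$-patterns. This is precisely the \emph{direct and tedious} verification the paper alludes to; everything downstream (the passage to the projective limit, the column-sum count giving $\hat{\iota}_\L=\times(-N)$, and the assembly through \eqref{eq:6termL}) is then purely formal.
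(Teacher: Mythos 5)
Your route is the paper's own: the paper omits the proof of Theorem~\ref{thm:mainDN} as the ``direct and tedious'' generalization of its $D_2$ computation, and the formal parts of your write-up are correct and match it --- $m(l)=N^l$, the column sums $1,1,N$ of $L_{l,l+1}, J_{l,l+1}, K_{l,l+1}$ giving $\hat{\iota}_\L=\times(-N)$ exactly as in Proposition~\ref{prop:extsZ}, and the assembly through Theorem~\ref{thm:main11}, the collapse of \eqref{eq:6termL}, and the five lemma. However, the step you single out as the crux is not merely tedious: as you state it (and as the paper's own unnumbered lifting lemma before Lemma~\ref{lem:IAZI} states it), it is \emph{false}, already for $N=2$ and $l=2$. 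Take $m^2=0$ and $n^2=(1,0,0,-1)\in\Z_0^4$. If $I_{2,3}x=m^2=0$, i.e.\ $x_2=-x_1$, $x_4=-x_3$, $x_6=-x_5$, $x_8=-x_7$, then $K_{2,3}x=0$, and the displayed matrix $I_{2,3}-A_{2,3}$ gives $(I_{2,3}-A_{2,3})x=\bigl(-(x_1+x_5),\ x_1+x_5,\ -(x_3+x_7),\ x_3+x_7\bigr)$, whose second coordinate is always minus the first; so $n^2=(1,0,0,-1)$ is unattainable, even though both prescribed vectors lie in $\Z_0^4$. The structural reason: indexing $V_{l+1}^\min$ by words $(\nu_1,z,\nu_{l+1})$ with middle word $z\in V_{l-1}^\min$, condition (1) prescribes the row sums and condition (2) (after using (1) to evaluate the $K_{l,l+1}$-part) prescribes the column sums of the $N\times N$ integer matrix attached to each $z$; such an integral transportation problem is solvable if and only if the row total equals the column total \emph{for every single $z$}, and arbitrary pairs $(m^l,n^l)$ of zero-sum vectors satisfy only the global, not the per-$z$, matching. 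So the ``$N$-fold redistribution freedom'' is genuinely insufficient as claimed.

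The conclusion $(I-A_\L)\Z_{I,0}=\Z_{I,0}$, and hence the theorem, is nevertheless true, and the repair is exactly where you gesture with ``the two conditions together are what make the projective limit go through'': one must not prove the pointwise lemma for arbitrary $(m^l,n^l)$, but lift along the given $I$-compatible sequence. Fix $(n^l)_{l\in\Zp}\in\Z_{I,0}$ and construct $(m^l)_l$ inductively with $I_{l,l+1}m^{l+1}=m^l$ and $(I_{l,l+1}-A_{l,l+1})m^{l+1}=n^l$, starting from the trivial level $l=0$ where $\Z_0^{m(0)}=0$. Substituting the equations already satisfied at the previous step, the per-$z$ matching conditions needed at step $l$ reduce precisely to $I_{l-1,l}\,n^l=n^{l-1}$, which holds because $(n^l)_l\in\Z_{I,0}$; and every integral transportation problem with matching margins has an integer solution. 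This corrected induction (which also repairs the paper's $D_2$ argument, i.e.\ the lemma preceding Lemma~\ref{lem:IAZI}) is the actual content of the ``tedious'' verification; with it in place, the remainder of your proposal --- $\Extso(\ODNmin)\cong\Z_I/\Z_{I,0}\cong\Z$ via Theorem~\ref{thm:main11}, $\hat{\iota}_\L=\times(-N)$ by the column-sum count, the resulting short exact sequence $0\to\Z\to\Extso(\ODNmin)\to\Extwo(\ODNmin)\to 0$, and $\Extwo(\ODNmin)\cong\Z/N\Z$ --- goes through as written.
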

\begin{corollary}\label{cor:DyckNExts}
$\Extsz(\ODNmin) = \Extwz(\ODNmin) = \Hom_\Z(C(\calC, \Z),\Z),$
where $ C(\calC, \Z)$ 
is the abelian group of integer valued continuous functions 
on a Cantor set $\calC$.
\end{corollary}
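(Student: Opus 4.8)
The plan is to deduce the statement directly from the exactness of the K-homology long exact sequence \eqref{eq:Khomo}, once the connecting map $\hat{\iota}_\L$ has been identified by Theorem \ref{thm:mainDN}. Specialising \eqref{eq:Khomo} to $\A = \ODNmin$ and translating each term through the isomorphisms of Theorem \ref{thm:main11}, I obtain the exact sequence of abelian groups
$$0 \longrightarrow \Extsz(\ODNmin) \longrightarrow \Extwz(\ODNmin) \overset{s_\L}{\longrightarrow} \Z \overset{\hat{\iota}_\L}{\longrightarrow} \Extso(\ODNmin) \longrightarrow \Extwo(\ODNmin) \longrightarrow 0,$$
in which $s_\L$ and $\hat{\iota}_\L$ are the homomorphisms of Lemma \ref{lem:slambda}.

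First I would invoke Theorem \ref{thm:mainDN}: it gives $\Extso(\ODNmin) \cong \Z$ and shows that, under this identification, $\hat{\iota}_\L \colon \Z \to \Extso(\ODNmin)$ is multiplication by $-N$. Since $N \ge 2$, this map is injective, so $\Ker(\hat{\iota}_\L) = 0$. Exactness of the sequence at the node $\Z$ then forces $\Im(s_\L) = \Ker(\hat{\iota}_\L) = 0$, i.e. $s_\L$ is the zero homomorphism.

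Next I would chase the left-hand segment $0 \to \Extsz(\ODNmin) \to \Extwz(\ODNmin) \overset{s_\L}{\to} \Z$. The leftmost $0$ yields injectivity of the natural map $\Extsz(\ODNmin) \to \Extwz(\ODNmin)$, while exactness at $\Extwz(\ODNmin)$ together with $s_\L = 0$ gives $\Im(\Extsz(\ODNmin) \to \Extwz(\ODNmin)) = \Ker(s_\L) = \Extwz(\ODNmin)$, hence surjectivity. Thus the map is an isomorphism, and combining with the already established $\Extwz(\ODNmin) = \Hom_\Z(C(\calC, \Z), \Z)$ completes the proof.

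I do not expect a genuine analytic obstacle: the whole argument is a formal consequence of exactness. The one point demanding care is bookkeeping --- applying the identifications of Theorem \ref{thm:main11} consistently so that the connecting map in \eqref{eq:Khomo} is precisely the multiplication-by-$(-N)$ map of Theorem \ref{thm:mainDN}. The injectivity of that map is the whole engine of the argument, and it in turn rests on the structure-matrix identity $(I - A_\L)\Z_{I,0} = \Z_{I,0}$ of Lemma \ref{lem:IAZI} that underlies Theorem \ref{thm:mainDN}.
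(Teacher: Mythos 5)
Your proof is correct and takes essentially the same route as the paper: the paper likewise specialises the cyclic six-term exact sequence to $\ODNmin$, uses the injectivity of the multiplication-by-$(\pm N)$ map $\Z \longrightarrow \Extso(\ODNmin)$ from Theorem \ref{thm:mainDN} to force the connecting map out of $\Extwz(\ODNmin)$ to vanish, and then identifies $\Extsz(\ODNmin) \cong \Extwz(\ODNmin) = \Hom_\Z(C(\calC,\Z),\Z)$ via the already established weak-group computation. Your write-up is in fact slightly cleaner, since the paper's printed proof contains index typos (writing $\Extwo$ and $\Extso$ where $\Extwz$ and $\Extsz$ are meant) that your bookkeeping silently corrects.
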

\begin{proof}
There exists a  cyclic six-term exact sequence
\begin{equation}\label{eq:6termextON}
\begin{CD}
\Extsz(\ODNmin) @>{}>> \Extwz(\ODNmin) @>{\partial}>> \Z \\
@AAA  @.   @VV{\times N}V  \\
0 @<{}<< \Extwo(\ODNmin)  @<{}<< \Extso(\ODNmin)   
\end{CD}
\end{equation}
for the $C^*$-algebra $\ODNmin.$
Since the map 
$\Z \overset{\times N}{\longrightarrow}\Extso(\ODNmin) $ is injective,
the connecting map
$\partial: \Extwo(\ODNmin) \longrightarrow \Z$ is the zero map,
so that 
we have 
$\Extwo(\ODNmin) \cong \Extso(\ODNmin).$
As in \cite[Section 5]{KMDocMath2003},
$\Extso(\ODNmin) \cong \Hom_\Z(C(\calC, \Z),\Z),$
we get the assertioon.  
\end{proof}
\medskip

{\it Acknowledgment:}
The author is indebted to Taro Sogabe  for stimulating conversations and discussions
on extension groups of $C^*$-algebras..
This work was supported by JSPS KAKENHI 
Grant Numbers 19K03537, 24K06775.

\end{document}